\documentclass[a4paper,reqno,11pt]{amsart}

\usepackage{amsmath,amsfonts,amsthm,amssymb,euler,newpxtext,latexsym,mathrsfs,mathtools,enumerate}

\usepackage[dvipsnames]{xcolor}

\usepackage{tikz-cd,tikz-3dplot}

\usepackage{eqparbox}

\usepackage[colorlinks=true,linkcolor=blue,citecolor=red,urlcolor=teal,bookmarksdepth=subsection,final]{hyperref}

\DeclareMathOperator{\prob}{\mathcal{P}}

\DeclareMathOperator{\supp}{\mathrm{supp}}
\DeclareMathOperator*{\esssup}{\pi\text{-}ess\,sup}

\newcommand{\cM}{\mathcal{M}}
\newcommand{\nn}{\mathbb{N}}
\newcommand{\zz}{\mathbb{Z}}
\newcommand{\rr}{\mathbb{R}}
\newcommand{\cc}{\mathbb{C}}
\newcommand{\ff}{\mathbb{F}}
\newcommand{\cs}{\mathrm{C}^*}
\newcommand{\js}{\mathcal{Z}}

\newcommand{\eps}{\varepsilon}
\newcommand{\fp}{\mathfrak{p}}

\newcommand{\id}{\mathrm{id}}

\makeatletter
\@namedef{subjclassname@2020}{%
  \textup{2020} Mathematics Subject Classification}
\makeatother

\newtheorem {theorem}{Theorem}[section]
\newtheorem {lemma}[theorem]{Lemma}
\newtheorem {proposition}[theorem]{Proposition}
\newtheorem {corollary}[theorem]{Corollary}

\newtheorem {thm}{Theorem}
\newtheorem {cor}[thm]{Corollary}

\theoremstyle {definition}

\newtheorem {example}[theorem]{Example}
\newtheorem {definition}[theorem]{Definition}
\newtheorem {remark}[theorem]{Remark}

\newtheorem {notation}[theorem]{Notation}

\numberwithin{equation}{section}

\title{A $\mathrm{C}^*$-algebraic Hoffman--Wielandt theorem}

\author[B.~Jacelon]{Bhishan Jacelon}
\address[B.~Jacelon]{
Institute of Mathematics of the Czech Academy of Sciences\\ \v{Z}itn\'{a} 25\\110 00 Praha 1\\Czech Republic}
\email{\href{mailto:jacelon@math.cas.cz}{jacelon@math.cas.cz}}

\begin{document}

\begin{abstract} 
We observe that the $2$-norm distance $d_{U,2}$ between the unitary orbits of normal elements in a $\mathrm{II}_1$ factor $\mathcal{M}$ is equal to the $2$-Wasserstein distance between the spectral measures induced by the trace $\tau_\mathcal{M}$. Using classification and optimal transport theory, we deduce an analogous $2$-norm equation for normal operators $x$ and $y$ in simple, separable, unital, nuclear, $\mathcal{Z}$-stable $\mathrm{C}^*$-algebras that are either monotracial, or real rank zero with finitely many extremal traces, provided that $\sigma(x)=\sigma(y)$ is convex. Consequently, $d_{U,2}$ equips the set of approximate unitary equivalence classes of contractive normal elements of $\mathcal{M}$ with the structure of a compact length space. The same is true of the set of equivalence classes of embeddings into the Jiang--Su algebra $\mathcal{Z}$ of classifiable tracial $2$-Wasserstein spaces over compact, convex planar domains.
\end{abstract}

\maketitle

\section*{Introduction} \label{section:intro}

This article is devoted to measure-theoretic computations of distances between unitary orbits of normal elements in $\cs$-algebras. The origin of this research programme is Weyl's theorem \cite[Satz 1]{Weyl:1912aa}, which implies that for self-adjoint matrices $x,y\in M_n$, the distance
\[
d_U(x,y)=\inf_{u\in U(M_n)}\|x-uyu^*\|
\]
between their unitary orbits is equal to the optimal matching distance
\[
\delta(x,y)=\inf_{\sigma\in S_n} \max_{1\le i\le n} |\alpha_i-\beta_{\sigma(i)}|
\]
between their eigenvalues. On the real line, this latter distance is attained at the identity permutation once the eigenvalues of $x$ and $y$ are listed in increasing order  $\alpha_1\le\dots\le\alpha_n$ and $\beta_1\le\dots\le\beta_n$. Lacking such a canonical ordering for more general (multi)sets of complex numbers, it is not \emph{a priori} clear whether the equality $d_U(x,y)=\delta(x,y)$ should hold for other classes of normal matrices. Since the inequality $d_U(x,y)\le\delta(x,y)$ is an easy consequence of diagonalisation, what is at issue is the reverse inequality $\delta(x,y)\le d_U(x,y)$. That is the substance of Weyl's theorem for self-adjoint matrices, and Bhatia and Davis's later theorem for unitaries \cite{Bhatia:1984aa}. The inequality is in fact known not to hold in general, even in $M_3$ \cite{Holbrook:1992aa}, although there is a universal constant $c$ such that  $\delta(x,y)\le c\cdot d_U(x,y)$ for normal elements $x,y$ in any $M_n$ \cite{Bhatia:1983aa} and indeed in any semifinite von Neumann algebra \cite{Davidson:1986aa,Hiai:1989aa}.

For tracial von Neumann or $\cs$-algebras, one can interpret the optimal matching distance $\delta$ as the $\infty$-Wasserstein distance \eqref{eqn:winfty} between spectral measures, and then look for classes of normal elements for which $d_U=W_\infty$ (see, for example, \cite{Hiai:1989aa,Hu:2015aa,Cheong:2015aa,Jacelon:2014aa,Jacelon:2021wa}). In \cite{Jacelon:2021vc,Jacelon:2025ab}, a case is made for broadening the scope of this analysis to include the full family of Wasserstein metrics $(W_\fp)_{\fp\in[1,\infty]}$ \eqref{eqn:transfer}. Motivated by recent developments in the theory of unitary orbits in $\mathrm{II}_1$ factors (see \cite{Ding:2005aa,Li:2023aa,Hua:2026aa}, or Theorem~\ref{thm:hadwin} for a summary), our priority here is $\fp=2$. The goal is to find classes of tracial $\cs$-algebras $A$ and normal elements $x,y\in A$ for which the $2$-norm distance $d_{U,2}(x,y)$ \eqref{eqn:dup} is equal to the $2$-Wasserstein distance $W_2(x,y)$ \eqref{eqn:wp}.

When $A=M_n$ is a matrix algebra, the equality $d_{U,2}(x,y)=W_2(x,y)$ holds for \emph{arbitrary} normal elements $x,y\in A$. This is a consequence of the  inequality
\[
W_2(x,y) \le \|x-y\|_2
\]
proved by Hoffman and Wielandt in \cite{Hoffman:1953aa}. Equality also holds for Hilbert--Schmidt operators on infinite-dimensional Hilbert space \cite{Bhatia:1994aa}. Here, we observe the same for $\mathrm{II}_1$ factors (see Corollary~\ref{cor:hw}). We note that Theorem~\ref{thm:a} may be known to experts, certainly in the case of self-adjoint operators $x,y\in\cM$ (see \cite[Theorem 4.3]{Hiai:1989aa}). 

\begin{thm} \label{thm:a}
If $\cM$ is a $\mathrm{II}_1$ factor and $x,y\in\cM$ are normal, then $d_{U,2}(x,y)=W_2(x,y)$.
\end{thm}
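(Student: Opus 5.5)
We will prove the two inequalities $W_2(x,y)\le d_{U,2}(x,y)$ and $d_{U,2}(x,y)\le W_2(x,y)$ separately. Throughout, write $\mu_x=\tau_\cM\circ E_x$ for the spectral measure of a normal $x\in\cM$, a Borel probability measure on $\sigma(x)\subseteq\cc$. Recall that $W_2(x,y)^2=\inf_\pi\int|s-t|^2\,d\pi(s,t)$, where $\pi$ ranges over couplings of $\mu_x$ and $\mu_y$.

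\emph{Lower bound (Hoffman--Wielandt inequality).} Since $W_2(x,uyu^*)$ does not depend on $u$, it suffices to show $W_2(x,y)\le\|x-y\|_2$ for all normal $x,y\in\cM$. We expand
\[
\|x-y\|_2^2=\tau(x^*x)+\tau(y^*y)-2\,\mathrm{Re}\,\tau(x^*y).
\]
The first two terms equal $\int|s|^2d\mu_x$ and $\int|t|^2d\mu_y$. For the cross term, define a measure on $\cc^2$ by
\[
\pi(S\times T)=\tau\bigl(E_x(S)E_y(T)\bigr)=\tau\bigl(E_x(S)E_y(T)E_x(S)\bigr)\ge0.
\]
Positivity comes from traciality. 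The marginals of $\pi$ are $\mu_x$ and $\mu_y$, and $\pi$ extends to a genuine coupling because it is a positive, finitely additive, bounded set function on rectangles that is countably additive in each variable.

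Next, approximate $x$ and $y$ in norm by finite spectral sums $\sum_i s_iE_x(S_i)$ and $\sum_j t_jE_y(T_j)$. This gives
\[
\tau(x^*y)=\int\bar s\,t\,d\pi(s,t).
\]
Hence $\|x-y\|_2^2=\int|s-t|^2\,d\pi\ge W_2(x,y)^2$. The main technical point of this step is justifying that the bimeasure extends to a measure. An alternative that avoids this is to approximate: reduce to finitely many spectral projections, where $\pi$ is a finite doubly stochastic weighting, then use lower semicontinuity of $W_2$ under weak$^*$ convergence together with $\|\cdot\|_2$-continuity.

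\emph{Upper bound.} This is the main obstacle, and it uses that $\cM$ is a II$_1$ factor. First, suppose $\mu_x$ and $\mu_y$ are both uniform on $n$ atoms of mass $1/n$ each (with multiplicity allowed). By Birkhoff, an optimal coupling for uniform empirical measures is a permutation $\sigma$. Since $\cM$ is a factor, all projections of trace $1/n$ are unitarily equivalent. So we can pick partitions of unity $(p_i)$ from $E_x$ and $(q_i)$ from $E_y$ into trace-$1/n$ projections, and a unitary $u$ with $uq_{\sigma(i)}u^*=p_i$. Then
\[
\|x-uyu^*\|_2^2=\frac1n\sum_i|\alpha_i-\beta_{\sigma(i)}|^2=W_2(x,y)^2.
\]

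In general, we approximate. Using diffuseness of $\cM$, we can split spectral projections of $x$ into pieces of trace $1/n$ (possibly cutting across atoms) and choose points in each piece. This produces normal elements $x_n$ of this uniform form with $\|x-x_n\|_\infty\to0$, and similarly $y_n$ for $y$; indeed $x_n$ commutes with $x$ and is close in operator norm once the partition is fine in both mass and diameter. Concretely, we take a Borel map $f\colon[0,1]\to\sigma(x)$ pushing Lebesgue measure to $\mu_x$, realise $x=f(a)$ for a self-adjoint $a$ in a diffuse abelian subalgebra containing $x$ with uniform distribution, and discretise. Alternatively we can use the $2$-norm instead of the operator norm, which suffices.

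With these approximations,
\[
d_{U,2}(x,y)\le\|x-x_n\|_2+d_{U,2}(x_n,y_n)+\|y-y_n\|_2=o(1)+W_2(x_n,y_n),
\]
and $W_2(x_n,y_n)\to W_2(x,y)$ by the triangle inequality for $W_2$, since $W_2(x,x_n)\le\|x-x_n\|_2$. This gives $d_{U,2}\le W_2$.
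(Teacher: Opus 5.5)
Your proof is correct in outline. Your lower bound is essentially the paper's Theorem~\ref{thm:hw}: your fallback of reducing to finitely many spectral projections and using the coupling $a_{ij}=\tau(p_iq_j)$ is exactly what the paper does. If you want the bimeasure version directly, there is a clean justification. Left multiplication by $\cs(x)$ and right multiplication by $\cs(y)$ on $L^2(\cM,\tau)$ commute, so $f\otimes g\mapsto\tau(f(x)g(y))$ is a state on $C(\sigma(x)\times\sigma(y))$.

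The upper bound takes a genuinely different route. The paper (Theorem~\ref{thm:hn}) reduces to finite spectra in operator norm via Lemma~\ref{lemma:continuity} and takes an optimal coupling matrix $(a_{ij})$ with arbitrary weights. It then realises that coupling directly: it splits $p_i=\sum_jp_{ij}$ and $q_j=\sum_iq_{ij}$ with $\tau(p_{ij})=a_{ij}=\tau(q_{ij})$, and conjugates each $q_{ij}$ onto $p_{ij}$. Continuous dimension lets \emph{any} coupling be implemented. So Birkhoff--von Neumann is never needed, the approximations stay in operator norm, and the same computation gives $d_{U,\fp}\le W_\fp$ for every $\fp$.

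You instead force equal weights $1/n$ so that an optimal plan is a permutation, which transplants the classical matrix proof. The price is that your approximants are only $2$-norm close. You therefore have to invoke Hoffman--Wielandt itself, $W_2(x,x_n)\le\|x-x_n\|_2$, to pass to the limit, and the argument cannot reach $\fp=\infty$. The gain is self-containment: you never import norm continuity of $W_\fp$.

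One correction: your claim that $\|x-x_n\|_\infty\to0$ is false in general. Take $\mu_x=\tfrac{1}{\sqrt2}\delta_0+(1-\tfrac{1}{\sqrt2})\delta_1$, with the trace-$1/n$ pieces commuting with $x$ as in your construction. Since $\tfrac{1}{\sqrt2}\notin\tfrac1n\zz$, some piece $p$ has both $pE_x(\{0\})\neq0$ and $pE_x(\{1\})\neq0$. Then $\|(x-x_n)p\|\ge\tfrac12$.

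Only $2$-norm convergence is available, and fortunately that is all your final chain uses. To get it, partition $\sigma(x)$ into $k$ Borel sets of diameter less than $\delta$, and chop each spectral projection into trace-$1/n$ pieces. The leftover remainders have total trace less than $k/n$, and this total is a multiple of $1/n$, so cut it into further trace-$1/n$ pieces. This gives $\|x-x_n\|_2^2\le\delta^2+4\|x\|^2k/n$.

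Likewise, in your concrete version, evaluating a general Borel $f$ at arbitrary points of each interval need not converge in $L^2$. Use interval averages of $f$ instead; these converge by density of continuous functions.
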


There are a few available strategies for facilitating the move from von Neumann algebras to tracial $\cs$-algebras. For self-adjoint operators in $\cs$-algebras of real rank zero (meaning that the invertible self-adjoint elements are dense among all self-adjoint elements), one can compute $W_\infty$ as the distance between eigenvalue functions and proceed as in \cite{Sunder:1992aa} or \cite[Theorem 5.1]{Skoufranis:2016aa}. For more general classes of normal elements, there have been two different approaches. The first approach (more fundamental, and closer in spirit to the work of Hiai and Nakamura) uses Riesz interpolation in conjunction with weak unperforation, which are properties of the $K_0$-group that in particular hold when $A$ has real rank zero, stable rank one and tensorially absorbs the Jiang--Su algebra $\js$ (see \cite{Zhang:1990aa,Gong:2000kq,Rordam:2004kq}). This method establishes an analogue of the $\fp=\infty$ version of Theorem~\ref{thm:hn} (one half of the proof of Theorem~\ref{thm:a}) without restrictions on spectra. See especially \cite{Hu:2015aa} (in particular, \cite[Theorem 3.6]{Hu:2015aa}), as well as \cite{Elliott:2021aa,Wang:2023aa,Wang:2025aa}.

The second approach, developed in \cite{Jacelon:2021wa}, combines classification and optimal transport theory. In Section~\ref{section:cstar}, we revisit, clarify and simplify some of this theory (see in particular Theorem~\ref{thm:infinitycircle} and Theorem~\ref{thm:lowerbound}), and we adapt it to $\fp<\infty$. Here, our focus shifts from normal elements to embeddings $\varphi \colon B \to A$ for certain commutative (or noncommutative \cite{Jacelon:2025ab}) domains $B$, and certain simple, separable, unital, nuclear, $\js$-stable codomains $A$. The idea is to solve an optimal transport problem in $B$ (see Theorem~\ref{thm:convex}), and then use the classification of embeddings \cite{Carrion:wz}  to convert the optimal transport automorphism of $B$ into an optimal conjugating unitary in $A$ (see Theorem~\ref{thm:transport}). The approach of \cite{Hu:2015aa}, in which $B$ is the $\cs$-algebra of continuous functions on the spectrum of a normal operator, is to directly solve a noncommutative transport problem in the $\cs$-algebra $A$ using the structural properties discussed above. Our technique needs additional assumptions on $A$ and $B$. Namely, we require either that $A$ has a unique trace and that $K_1(B)=0$, or that $A$ has real rank zero and finitely many extremal tracial states. In both cases, the classifying invariant $\underline{K}T_u$ of \cite{Carrion:wz} reduces to tracial data on the $K$-theoretically localised components \eqref{eqn:emb} of the set of unital embeddings of $B$ into $A$ (see Theorem~\ref{thm:cstarmetric}). We can then extract an inequality of the form $d_{U,\fp} \le k \cdot W_\fp$ for a suitable `transport constant ' $k$ that depends on the geometry of the domain (see Definition~\ref{def:transport}). An application of Hoffman--Wielandt  (Theorem~\ref{thm:lowerbound}) gives us the reverse inequality and our main result for tracial $\cs$-algebras (see Corollary~\ref{cor:normal}).

\begin{thm} \label{thm:b}
Let $A$ be a simple, separable, unital, nuclear, $\js$-stable $\cs$-algebra that is either monotracial, or real rank zero with finitely many extremal tracial states. If $x,y\in A$ are normal elements such that $\sigma(x)=\sigma(y)$ is convex, then $d_{U,2}(x,y)=W_2(x,y)$.
\end{thm}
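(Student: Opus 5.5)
We prove the two inequalities $W_2(x,y)\le d_{U,2}(x,y)$ and $d_{U,2}(x,y)\le W_2(x,y)$ separately.

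\emph{Lower bound.} This direction should hold for arbitrary normal elements of a tracial $\cs$-algebra, with no assumptions on spectra. Here $W_2$ is computed with respect to the trace or traces of $A$; in the finitely-many-extremal-traces case we take the relevant maximum over $\tau\in T(A)$. Fix $\tau\in T(A)$ and pass to the GNS closure $\pi_\tau(A)''$. If $\tau$ is extremal, this is a finite factor; otherwise we may reduce to extremal traces or embed into a $\mathrm{II}_1$ factor, for instance an ultrapower containing $\mathcal{R}$. Since unitary conjugation does not change spectral measures, and since $\|x-uyu^*\|_{2,\tau}$ is computed in $\pi_\tau(A)''$, the Hoffman--Wielandt-type inequality $W_2(\mu_x,\mu_y)\le\|x-y\|_2$ from Theorem~\ref{thm:a} (more precisely, its half Theorem~\ref{thm:lowerbound}) gives $W_2(x,uyu^*)\le\|x-uyu^*\|_2$ for every unitary $u$. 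Taking the infimum over $u$ yields $W_2\le d_{U,2}$.

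\emph{Upper bound.} Let $X=\sigma(x)=\sigma(y)$, a compact convex planar set, and set $B=C(X)$. Since $X$ is contractible, $K_1(B)=0$, $K_0(B)=\zz$, and the total $K$-theory is trivial beyond the unit. The embeddings $\varphi_x,\varphi_y\colon B\to A$ induced by functional calculus are unital and injective because the spectra are full. Therefore they lie in the same $K$-theoretic component, and by the classification of embeddings \cite{Carrion:wz} (Theorem~\ref{thm:cstarmetric}) they are determined up to approximate unitary equivalence by their tracial data, namely the spectral measures $\mu_x^\tau,\mu_y^\tau$, together with $\overline{K_1}$-type/$\underline{K}T_u$ data. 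Under the hypotheses (monotracial with $K_1(B)=0$, or real rank zero) this extra data vanishes or is automatically compatible.

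Next we take an optimal transport map. In the monotracial case, approximate $\mu_x,\mu_y$ by measures with full support whose densities are regular, and use Brenier/Caffarelli (Theorem~\ref{thm:convex}). This gives a homeomorphism $T\colon X\to X$ with $T_*\mu_y\approx\mu_x$ and $\int|T(z)-z|^2\,d\mu_y\approx W_2^2$, so the transport constant in Definition~\ref{def:transport} is $k=1$ for convex $X$. Then $\varphi_y\circ T^*$ and $\varphi_x$ have the same tracial data, so they are approximately unitarily equivalent by Theorem~\ref{thm:transport}. Hence
\[
d_{U,2}(x,y)\le\|\varphi_y(T^*\id)-y\|_2=\Bigl(\int|T-\id|^2\,d\mu_y\Bigr)^{1/2}.
\]

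In the finitely-many-traces case we need a single $T$ that works simultaneously for all extremal traces. Real rank zero lets us perturb $x$ and $y$ so that their measures are atomic-like with rational weights, and then the transport can be performed via partitions or projections, reducing the problem to a matrix-type Hoffman--Wielandt computation.

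The main obstacle is this step: producing a homeomorphism or automorphism of $B$ that is optimal for $W_2$, when optimal transport maps are generally neither continuous nor invertible. The plan is to use regularity theory on convex domains after smoothing the measures, and to control the approximation errors through the continuity of $W_2$. In the multi-trace case, the difficulty is simultaneous optimality across traces; real rank zero is what permits the discretisation argument.
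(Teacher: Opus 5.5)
There is a genuine gap in the case where $A$ has real rank zero and several extremal traces $\tau_1,\dots,\tau_m$. You look for a single homeomorphism $T$ with $T_*\nu_i\approx\mu_i$ for every $i$, where $\mu_i=\mu_{\tau_i\circ\varphi_x}$ and $\nu_i=\mu_{\tau_i\circ\varphi_y}$. This cannot work in general: if $\nu_1=\nu_2$ while $\mu_1\neq\mu_2$, no map pushes the common measure onto both.

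Your fallback does not close the gap, for three reasons. Real rank zero concerns self-adjoint elements and does not by itself give finite-spectrum normal approximants of $x,y$. ``Rational weights'' has no meaning when the weights vary with the trace. And a permutation/matrix matching is one coupling used for all traces at once, whereas the $W_2$-optimal couplings differ from trace to trace.

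The missing idea (Theorem~\ref{thm:transport}) is to \emph{localise the extremal traces in orthogonal corners}:
\begin{enumerate}
\item Real rank zero makes $\rho_A(K_0(A))$ dense in $\mathrm{Aff}(T(A))\cong\rr^m$. This gives projections $p_1+\dots+p_m=1$ with $\tau_i(p_j)\approx\delta_{ij}$.
\item Classification lets you assume $\psi=\varphi_y=\psi_1+\dots+\psi_m$ with unital $\psi_i\colon C(X)\to p_iAp_i$, and $\mu_i,\nu_i$ faithful and diffuse.
\item Twist each summand by its own transport homeomorphism $h_i$ for $(\mu_i,\nu_i)$, setting $\varphi'=\sum_i\psi_i\circ h_i^*$.
\end{enumerate}
This $\varphi'$ agrees with $\varphi_x$ on $K$-theory and approximately on traces, so local classification makes them unitarily close on the compact Lipschitz ball. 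Moreover,
\[
\|\varphi'(f)-\psi(f)\|_2^2=\max_i\sum_j\tau_i\circ\psi_j(|f\circ h_j-f|^2)\lesssim\max_i\int_X|f\circ h_i-f|^2\,d\nu_i\le\max_i W_2(\mu_i,\nu_i)^2+\eps .
\]
The last maximum is $W_2(x,y)^2$ by convexity of $W_2^2$ (Proposition~\ref{prop:wassfacts}). A discretised Hiai--Nakamura-type argument with trace-dependent couplings realised simultaneously by subprojections might also work. It would need a finite-spectrum approximation theorem for normal elements plus comparison and Riesz decomposition, none of which you supply.

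The rest is sound. For the lower bound, the relevant result is Theorem~\ref{thm:hw}, not Theorem~\ref{thm:lowerbound}. The paper applies it directly in $\pi_\tau(A)''$ for every $\tau$, since it only needs a faithful trace. Your detour through extremal traces (via convexity of $\tau\mapsto W_{2,\tau}(x,y)^2$) or an ultrapower is valid but unnecessary.

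For the monotracial upper bound, you take a genuinely different route. You replace the paper's discretise-and-disjoint-tubes construction behind $k_{(X,\rho,2)}=1$ (Theorem~\ref{thm:convex}) with Brenier's map plus Caffarelli's regularity on convex domains. For densities bounded above and below, this gives a homeomorphism with exact pushforward and exactly optimal cost. That is a legitimate alternative for $\fp=2$, at the price of deep regularity theory and with no extension to other $\fp$.

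One repair is needed: the smoothing must be done on embeddings, not just on measures. Continuity of $W_2$ alone does not control $d_{U,2}$. Applying a map built from smoothed measures to the original measures is circular, since its modulus of continuity depends on the smoothing. Instead, use the existence part of classification to get $\varphi',\psi'$ whose measures have regular densities and are $W_\infty$-close to $\mu_x,\mu_y$; this is possible because these measures are faithful on $X$. Then use the $\fp=\infty$ case of Corollary~\ref{cor:transport} to see that $\varphi',\psi'$ are $d_U$-close to $\varphi_x,\varphi_y$. Also, the approximate unitary equivalence of $\psi'\circ T^*$ with $\varphi'$ comes from Theorem~\ref{thm:cstarmetric}, not Theorem~\ref{thm:transport}.
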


The utility of these theorems is that they provide insight into the geometry of the set of approximate unitary equivalence classes of normal elements (in the $\mathrm{II}_1$ factor setting) or of embeddings (in the setting of classifiable tracial $\cs$-algebras). In similar spirit to ongoing work of Toms \cite{Toms:2023aa,Toms:2024aa}, we are especially interested in the existence and characterisation of length-minimising geodesics between classes. These geodesics are very well understood in classical Wasserstein space: they are `displacement interpolations' corresponding to optimal dynamical transference plans that describe how mass is transported over time. Theorem~\ref{thm:a} affords us an operator-theoretic interpretation of this metric structure (see Theorem~\ref{thm:lengthspaces}).

\begin{cor} \label{cor:c}
The metric space $(\mathfrak{n}(\cM),d_{U,2})$ of approximate unitary equivalence classes of contractive normal elements in a $\mathrm{II}_1$ factor $\cM$ is isometric to the compact, strictly intrinsic length space $(\prob(\mathbb{D}),W_2)$ of Borel probability measures on the unit disc.
\end{cor}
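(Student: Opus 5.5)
We prove Corollary~\ref{cor:c} by building a bijection from $\mathfrak{n}(\cM)$ to $\prob(\mathbb{D})$ that is isometric by Theorem~\ref{thm:a}. We then import the metric geometry of $(\prob(\mathbb{D}),W_2)$ from classical optimal transport.

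\emph{The map.} To a contractive normal $x\in\cM$ we assign its spectral measure $\mu_x=\tau_\cM\circ E_x$, where $E_x$ is the spectral measure of $x$. This is a Borel probability measure supported on $\sigma(x)\subseteq\mathbb{D}$.

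\emph{Well defined and injective.} First note that $d_{U,2}(x,y)=0$ exactly when $x$ and $y$ are approximately unitarily equivalent in $\|\cdot\|_2$. This is how the classes in $\mathfrak{n}(\cM)$ should be read; if the paper uses the operator norm instead, I would check against its definitions that the two notions coincide for normal elements of a factor. By Theorem~\ref{thm:a}, $d_{U,2}(x,y)=W_2(\mu_x,\mu_y)$, and $W_2$ is a genuine metric on $\prob(\mathbb{D})$. So $d_{U,2}(x,y)=0$ if and only if $\mu_x=\mu_y$. This shows at once that the map is well defined on classes, injective, and isometric.

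\emph{Surjective.} Given $\mu\in\prob(\mathbb{D})$, we need a normal contraction with spectral measure $\mu$. A $\mathrm{II}_1$ factor contains a diffuse abelian von Neumann subalgebra $L^\infty([0,1],\lambda)$, on which $\tau_\cM$ restricts to Lebesgue measure $\lambda$; for instance, take a maximal abelian subalgebra, which is diffuse. There is a Borel map $f\colon[0,1]\to\mathbb{D}$ with $f_*\lambda=\mu$; for example, compose a Borel isomorphism $[0,1]\cong\mathbb{D}$ with a monotone rearrangement, or use the standard isomorphism theorem for Borel probability spaces. Then $x=f\in L^\infty[0,1]\subseteq\cM$ is a normal contraction with $\mu_x=f_*\lambda=\mu$.

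\emph{Geometry of the target.} Since $\mathbb{D}$ is a compact metric space, $\prob(\mathbb{D})$ is weak$^*$-compact by Prokhorov's theorem. On a compact space, $W_2$ metrizes the weak$^*$ topology, so $(\prob(\mathbb{D}),W_2)$ is compact. Because $\mathbb{D}$ is convex, it is a geodesic space, so every pair of measures is joined by a displacement interpolation. Concretely, take an optimal coupling $\pi$ of $\mu_0,\mu_1$ (one exists by compactness) and set $\mu_t=((1-t)p_1+tp_2)_*\pi$, where $p_1,p_2$ are the coordinate projections. This satisfies $W_2(\mu_s,\mu_t)=|s-t|\,W_2(\mu_0,\mu_1)$: the inequality $\le$ comes from using $\pi$ pushed forward along the segment as a coupling of $\mu_s,\mu_t$, and the triangle inequality forces equality. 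So the space is strictly intrinsic; this can be cited from Villani's standard results. Transporting this structure along the isometric bijection gives the corollary.

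\emph{Main obstacle.} Everything rests on Theorem~\ref{thm:a}; granting it, the delicate points are small. One is that equality of spectral measures really gives approximate unitary equivalence; here Theorem~\ref{thm:a} does the work, which avoids any need to invoke the von Neumann or Ding--Hadwin characterisation separately. The other is realizing every $\mu$, which uses only diffuseness of the factor.
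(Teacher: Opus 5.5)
There is a gap at injectivity. Surjectivity and the geometry of $(\prob(\mathbb{D}),W_2)$ are handled correctly. In the paper, $\mathfrak{n}(\cM)$ consists of \emph{norm} approximate unitary equivalence classes: the proof of Theorem~\ref{thm:lengthspaces} opens by appealing to Corollary~\ref{cor:metric}, which concerns norm closures of unitary orbits. With that reading, Theorem~\ref{thm:a} only converts $\mu_x=\mu_y$ into $d_{U,2}(x,y)=0$, that is, approximate unitary equivalence in $\|\cdot\|_2$. Nothing in your argument upgrades this to $d_U(x,y)=0$. So a priori $d_{U,2}$ is only a pseudometric on $\mathfrak{n}(\cM)$, and $[x]\mapsto\mu_x$ need not be injective. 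Your closing claim that Theorem~\ref{thm:a} ``does the work'' and makes Ding--Hadwin unnecessary is exactly where this fails. A statement about $d_{U,2}$ cannot by itself tell norm classes apart from $2$-norm classes, and the step you deferred (``I would check \dots'') is precisely the missing one.

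The paper fills it with Corollary~\ref{cor:metric}, i.e.\ the Ding--Hadwin implication \eqref{it:hadwin4}$\Rightarrow$\eqref{it:hadwin1} of Theorem~\ref{thm:hadwin}. For a single normal element you can also do it by hand; this is essentially the $\fp=\infty$ case of Theorem~\ref{thm:hn}. Given $\eps>0$, partition $\mathbb{D}$ into Borel sets $B_1,\dots,B_k$ of diameter less than $\eps$ and pick $\lambda_i\in B_i$. The spectral projections $E_x(B_i)$ and $E_y(B_i)$ have equal trace $\mu_x(B_i)=\mu_y(B_i)$, so they are Murray--von Neumann equivalent in the factor. Summing the implementing partial isometries gives a unitary $u$ with $uE_y(B_i)u^*=E_x(B_i)$ for every $i$. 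Hence $\|x-uyu^*\|\le\|x-\sum_i\lambda_iE_x(B_i)\|+\|\sum_i\lambda_iE_y(B_i)-y\|<2\eps$. With this added, your proof is complete.

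Your explicit displacement interpolation $\mu_t=((1-t)p_1+tp_2)_*\pi$ also differs usefully from the paper. It is a self-contained substitute for the paper's citations of Lott--Villani and of the fact that complete, locally compact length spaces are strictly intrinsic, since it exhibits a minimising geodesic directly.
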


A sobering fact to bear in mind about Wasserstein geodesics in $\prob(\mathbb{D})$, or indeed in $\prob(X)$ for any smooth convex body $X\subseteq\cc$, is that a displacement interpolation between fully supported measures may pass through measures whose supports are not full and indeed not even convex. See \cite[Theorem 1]{Santambrogio:2016aa}. This means that we cannot immediately use Corollary~\ref{cor:transport} to find geodesics in the space of equivalence classes of embeddings of $C(X)$ into a tracial $\cs$-algebra $A$. The issue does not arise if we replace $C(X)$ by a \emph{simple} $\cs$-algebra $B$ with point-like $K$-theory \eqref{eqn:pointlike} whose tracial state space $T(B)$ is affinely homeomorphic to $\prob(X)$. In this case, the unital $^*$-homomorphisms $B\to A$ are all embeddings, which are now parameterised up to approximate unitary equivalence by the entirety of $\prob(X)$. The tracial Wasserstein space $\js(X)$ constructed in \cite[Corollary 3.7]{Jacelon:2025ab} has the right $K$-theory and trace space, and is designed to have the right structure for a sensible adaptation of Theorem~\ref{thm:b} (see Corollary~\ref{cor:zx}).

\begin{cor} \label{cor:d}
Let $X$ be a nonempty compact, convex subset of the Euclidean plane, let $(\js(X),W_2)$ be the corresponding projectionless, classifiable tracial Wasserstein space, and let $A$ be a simple, separable, unital, nuclear, $\js$-stable $\cs$-algebra with a unique trace. Then, the metric space $([\mathrm{Hom}^1(\js(X),A)],\overrightarrow{d_{U,2}})$ of approximate unitary equivalence classes of unital $^*$-homomorphisms $\js(X)\to A$ is isometric to the compact, strictly intrinsic length space $(\prob(X),W_2)$ of Borel probability measures on $X$.
\end{cor}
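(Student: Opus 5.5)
We would prove Corollary~\ref{cor:d} by building an explicit map from equivalence classes of homomorphisms to measures, and then checking that it is a bijection and an isometry. Geometric facts about $(\prob(X),W_2)$ then come for free.

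\textbf{Step 1: the parametrisation.} Let $\tau_A$ be the unique trace of $A$. By construction \cite[Corollary 3.7]{Jacelon:2025ab}, $\js(X)$ is simple, with point-like $K$-theory and $T(\js(X))\cong\prob(X)$ affinely. For a unital $^*$-homomorphism $\varphi\colon\js(X)\to A$, the trace $\tau_A\circ\varphi$ is therefore a measure $\mu_\varphi\in\prob(X)$. Since $A$ is monotracial and $\js(X)$ has point-like $K$-theory, the classifying invariant $\underline{K}T_u$ of \cite{Carrion:wz} reduces to the trace (this is the reduction in Theorem~\ref{thm:cstarmetric}). Hence $[\varphi]\mapsto\mu_\varphi$ is injective. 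It is surjective by the existence half of classification: every element of $\prob(X)=T(\js(X))$ comes from a unital embedding, and simplicity of $\js(X)$ means every unital $^*$-homomorphism is an embedding.

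\textbf{Step 2: the isometry.} We must show $\overrightarrow{d_{U,2}}(\varphi,\psi)=W_2(\mu_\varphi,\mu_\psi)$. For the lower bound, apply the Hoffman--Wielandt inequality (Theorem~\ref{thm:lowerbound}) to the images of the generating `coordinate' elements of $\js(X)$. These are the elements whose tracial distributions recover $\mu_\varphi$, as built into the definition of $\overrightarrow{d_{U,2}}$. This gives $W_2(\mu_\varphi,\mu_\psi)\le\|\varphi(\cdot)-u\psi(\cdot)u^*\|_2$ for every unitary $u$.

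For the upper bound, adapt Theorem~\ref{thm:transport} and Corollary~\ref{cor:zx}. Take an optimal (or near-optimal) transport between $\mu_\varphi$ and $\mu_\psi$, realised approximately by a map or automorphism of $\js(X)$ that moves tracial mass along the plan. Since $X$ is convex, the transport constant of Definition~\ref{def:transport} equals $1$ (Theorem~\ref{thm:convex}). Composing with $\psi$ and invoking uniqueness of classification then yields unitaries $u$ with $\|\varphi(\cdot)-u\psi(\cdot)u^*\|_2\le W_2(\mu_\varphi,\mu_\psi)+\eps$.

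The main obstacle is this upper bound. In the commutative $C(X)$ setting, supports of intermediate measures can fail to be convex, which breaks the argument. The point of $\js(X)$ is that its trace space is all of $\prob(X)$, so I expect Corollary~\ref{cor:zx} to supply exactly this step with constant $1$. I would first check that its hypotheses (a convex domain and a monotracial codomain) match ours.

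\textbf{Step 3: the geometry.} Because $X$ is a compact subset of $\rr^2$, the space $(\prob(X),W_2)$ is compact, since $W_2$ metrises weak$^*$ convergence on compact spaces. Because $X$ is convex, hence geodesic, $(\prob(X),W_2)$ is a geodesic space via displacement interpolation. This is the same fact used for $\mathbb{D}$ in Theorem~\ref{thm:lengthspaces}. A geodesic space is in particular a strictly intrinsic length space. All of these properties transfer to $([\mathrm{Hom}^1(\js(X),A)],\overrightarrow{d_{U,2}})$ through the isometry of Steps 1 and 2.
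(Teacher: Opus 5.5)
Your Steps 1 and 3 are fine and agree with the paper. The paper's proof is two lines: $[\varphi]\mapsto\mu_{\tau\circ\varphi}$ is surjective by the existence half of classification and isometric by Theorem~\ref{thm:zx}, and the length-space properties are inherited from $(\prob(X),W_2)$.

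\textbf{The gap is Step 2, which carries all of the content.} For the upper bound you defer to Corollary~\ref{cor:zx}, but that corollary \emph{is} the statement you are proving. As written, the argument is therefore circular; the result you actually need is Theorem~\ref{thm:zx}.

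The mechanism you sketch would not supply it either. Classification can give you, abstractly, an automorphism $\alpha$ of $\js(X)$ inducing $h_*$ on $T(\js(X))\cong\prob(X)$. But nothing controls $\sup_f\|\psi(\alpha(f))-\psi(f)\|_2$ for such an $\alpha$. Composing $\alpha$ with an inner automorphism leaves its class unchanged but changes this quantity.

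The paper instead works at each finite stage:
\begin{enumerate}
\item Theorem~\ref{thm:convex} gives a homeomorphism $h$ of $X$ with $h_*\nu_n\approx\mu_n$ and $\|\rho(h(x),x)\|_{L^2(X,\nu_n)}\le W_2(\mu_n,\nu_n)+\eps$. Its construction can be arranged to fix the base points $x_0,x_1$, and only then is $\alpha_h(f)=f\circ h$ an automorphism of the dimension drop algebra $X_{p_n,q_n}$.
\item Local classification of full embeddings $X_{p_n,q_n}\to A_\infty$ conjugates $\varphi_n$ to $\psi_n\circ\alpha_h$ within $\eps$ on the compact nucleus $\mathcal{D}(X_{p_n,q_n})$.
\item The Lipschitz condition defining the nucleus gives $\mathrm{tr}(|f(h(x))-f(x)|^2)\le\rho(h(x),x)^2$ pointwise. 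Hence $\|\psi_n(f\circ h)-\psi_n(f)\|_2\le W_2(\mu_n,\nu_n)+\eps$ for all $f\in\mathcal{D}(X_{p_n,q_n})$.
\item Taking $\limsup_n$ is exactly what the definition of $\overrightarrow{d_{U,2}}$ requires. The isometry $(T(\js(X)),\overrightarrow{W}_2)\cong(\prob(X),W_2)$ then identifies $\limsup_n W_2(\mu_n,\nu_n)$ with $W_2(\mu_\varphi,\mu_\psi)$.
\end{enumerate}

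\textbf{Your lower bound has the right idea but is imprecise in the same way.} There are no ``coordinate elements of $\js(X)$'' whose distributions give the limit measure. The elements whose tracial distributions you can compute live in the finite stages, and they recover $\mu_n$ rather than the limit. The paper uses the central embedding $\iota_n(f)=\mathrm{diag}(f,\dots,f)$ of $C(X)$ into $X_{p_n,q_n}$. This map sends $\mathrm{Lip}^\cc_1(X,\rho)\cap B_{\mathrm{diam}(X,\rho)}$ into $\mathcal{D}(X_{p_n,q_n})$ and is the identity on traces. The paper applies Theorem~\ref{thm:lowerbound} to $\varphi_n\circ\iota_n$ and $\psi_n\circ\iota_n$, then passes to $\limsup_n$.

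Finally, the obstruction you cite for $C(X)$ is misplaced. The last part of Corollary~\ref{cor:transport} already gives $d^\cc_{U,2}=W_2$ for embeddings of $C(X)$, so the upper bound is not what breaks. What fails for $C(X)$ is your Step 3: displacement interpolations can pass through non-faithful measures, which correspond to no embedding of $C(X)$. That, together with surjectivity onto all of $\prob(X)$, is what $\js(X)$ is for.
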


Notably, Theorem~\ref{thm:b} and Corollary~\ref{cor:d} apply to monotracial classifiable $\cs$-algebras like the Jiang--Su algebra $A=\js$ that do not have real rank zero.

\subsection*{Organisation} In Section~\ref{section:prelim}, we recall the definitions of the Wasserstein metrics on measures and the tracial Schatten distances between unitary orbits of normal elements of $\cs$-algebras. In Section~\ref{section:II1}, we use the language of optimal transport theory to prove Theorem~\ref{thm:a} and Corollary~\ref{cor:c}. Finally, in Section~\ref{section:cstar}, we use optimal transport and classification to prove Theorem~\ref{thm:b} and Corollary~\ref{cor:d}.

\subsection*{Acknowledgements} This work is in memory of Prof.\ Ben Garling, whose Part III course on Isoperimetry and Concentration of Measure was my introduction to optimal transport theory. The research was supported by the Czech Science Foundation (GA\v{C}R) project 25-15444K and the Institute of Mathematics of the Czech Academy of Sciences (RVO: 67985840). 

\section{Preliminaries} \label{section:prelim}

\begin{notation}
Throughout the article, we write $U(A)$ for the unitary group of a unital $\cs$-algebra $A$, $T(A)$ for the set of tracial states on $A$, and $\partial_e(T(A))$ for the set of extreme points of $T(A)$. Given $a\in A$, we refer to the set $\{uau^* \mid u\in U(A)\}$ as the unitary orbit of $a$. If $A=C(X)$ for some compact, Hausdorff space $X$, then we identify $T(A)$ with the set $\prob(X)$ of Borel probability measures on $X$. Given a measure $\mu$ and a continuous function $h$, $h_*\mu$ denotes the pushforward measure $\mu\circ h^{-1}$.
\end{notation}

\begin{definition} \label{def:wasserstein}
Let $(X,\rho)$ be a compact metric space and let $\fp\in[1,\infty]$. Given $\mu,\nu\in\prob(X)$, let $\Pi(\mu,\nu)$ denote the set of \emph{transference plans} between $\mu$ and $\nu$, that is, the set of Borel probability measures on $X\times Y$ with marginals $\mu$ and $\nu$. In other words, if $p_1,p_2\colon X\times X \to X$ denote the two coordinate projections, then $\pi\in\Pi(\mu,\nu)$ if and only if $(p_1)_*\pi=\mu$ and $(p_2)_*\pi=\nu$. For $\fp<\infty$, the \emph{$\fp$-Wasserstein distance} between $\mu$ and $\nu$ is
\begin{equation} \label{eqn:transfer}
W_\fp(\mu,\nu) = \inf_{\pi\in\Pi(\mu,\nu)} \left(\int_{X\times X} \rho(w,z)^\fp\, d\pi(w,z)\right)^\frac{1}{\fp}.
\end{equation}
The \emph{$\infty$-Wasserstein distance} between $\mu$ and $\nu$ is
\begin{equation} \label{eqn:winfty}
W_\infty(\mu,\nu) = \inf_{\pi\in\Pi(\mu,\nu)} \esssup_{(w,z)\in X\times X} \rho(w,z) = \inf\{r>0 \mid(\forall\, \text{ Borel }  U) \:\mu(U) \le \nu(U_r)\}.
\end{equation}
\end{definition}

The next proposition records some well-known facts about Wasserstein metrics that are discussed in more detail in \cite{Givens:1984to} and \cite{Jacelon:2025ab}.

\begin{proposition} \label{prop:wassfacts}
Let $(X,\rho)$ be a compact metric space.
\begin{enumerate}
\item The Wasserstein metrics $W_\fp$ all agree with $\rho$ on $X\cong\partial_e\prob(X)$ and all except $W_\infty$ induce the weak$^*$-topology on $\prob(X)$.
\item For $\fp\in[1,\infty)$, $W_\fp^\fp$ is convex in the sense that
\[
W_\fp\left(\sum_{i=1}^n\lambda_i\mu_i,\sum_{i=1}^n\lambda_i\nu_i\right)^\fp \le \sum_{i=1}^n\lambda_iW_\fp(\mu_i,\nu_i)^\fp
\]
for every $n\in\nn$, $\mu_1,\dots,\mu_n,\nu_1,\dots,\nu_n\in\prob(X)$ and $\lambda_1,\dots,\lambda_n\in[0,1]$ with $\sum_{i=1}^n\lambda_i=1$.
\item The $\fp$-Wasserstein distances are monotone in the sense that for every $\mu,\nu\in\prob(X)$, $W_\fp(\mu,\nu)\le W_{\fp'}(\mu,\nu)$ whenever $\fp\le\fp'$, and $W_\infty(\mu,\nu) = \lim_{\fp\to\infty}W_\fp(\mu,\nu)$.
\end{enumerate}
\end{proposition}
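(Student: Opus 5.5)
We prove the three assertions of Proposition~\ref{prop:wassfacts} separately. Each one reduces to a standard manipulation of transference plans, using that $\Pi(\mu,\nu)$ is nonempty (it contains $\mu\otimes\nu$) and weak$^*$-compact, since $X\times X$ is compact.

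\emph{Part (1).} For Dirac masses $\delta_w,\delta_z$ the only transference plan is $\delta_{(w,z)}$. Hence every $W_\fp(\delta_w,\delta_z)$ equals $\rho(w,z)$, including for $\fp=\infty$, where the $\pi$-essential supremum of $\rho$ over a point mass is its value at that point.

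For the topology claim with $\fp<\infty$, I would first compare with $W_1$. Because $\rho\le D:=\mathrm{diam}(X)$, we have $\rho^\fp\le D^{\fp-1}\rho$. Combined with H\"older (or Jensen), this gives
\[
W_1\le W_\fp\le D^{(\fp-1)/\fp}W_1^{1/\fp},
\]
so all $W_\fp$ with $\fp<\infty$ induce the same topology as $W_1$. By Kantorovich--Rubinstein duality, $W_1(\mu,\nu)=\sup_{\mathrm{Lip}(f)\le1}\left|\int f\,d\mu-\int f\,d\nu\right|$. On a compact space, Lipschitz functions are uniformly dense in $C(X)$ (Stone--Weierstrass), and the set of $1$-Lipschitz functions with $f(x_0)=0$ is compact by Arzel\`a--Ascoli. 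These two facts together show that $W_1$-convergence is equivalent to weak$^*$ convergence.

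For $W_\infty$, any non-isolated point $x$ gives a counterexample: take $x_n\to x$ with $x_n\neq x$. Then $\tfrac{n-1}{n}\delta_x+\tfrac1n\delta_{x_n}\to\delta_x$ weak$^*$, yet $W_\infty$ stays bounded below. Indeed, I would pick instead $\tfrac{n-1}{n}\delta_x+\tfrac1n\delta_y$ with a fixed $y\ne x$, which converges weak$^*$ to $\delta_x$ while $W_\infty=\rho(x,y)$ for every $n$. If $X$ has at least two points this witnesses the failure; the trivial cases aside, this is the intended meaning of the statement.

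\emph{Part (2).} Let $\pi_i\in\Pi(\mu_i,\nu_i)$ be optimal plans, which exist by compactness and lower semicontinuity. The mixture $\sum\lambda_i\pi_i$ has marginals $\sum\lambda_i\mu_i$ and $\sum\lambda_i\nu_i$. Integrating $\rho^\fp$ against it is linear in the plan, so the cost of the mixture equals $\sum\lambda_iW_\fp(\mu_i,\nu_i)^\fp$, and the inequality follows.

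\emph{Part (3).} Monotonicity follows from Jensen or H\"older applied for a fixed plan, $\|\rho\|_{L^\fp(\pi)}\le\|\rho\|_{L^{\fp'}(\pi)}$, and then taking infima. It follows that $L:=\lim_{\fp\to\infty}W_\fp\le W_\infty$.

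The reverse inequality is the main obstacle, because the optimal plans $\pi_\fp$ vary with $\fp$. My approach is a compactness argument. Choose optimal $\pi_\fp$ and pass to a weak$^*$ limit point $\pi\in\Pi(\mu,\nu)$ along a sequence $\fp_n\to\infty$. Fix $\fp_0$. For $\fp_n\ge\fp_0$,
\[
\|\rho\|_{L^{\fp_0}(\pi_{\fp_n})}\le\|\rho\|_{L^{\fp_n}(\pi_{\fp_n})}=W_{\fp_n}\le L.
\]
Since $\rho^{\fp_0}$ is continuous, letting $n\to\infty$ gives $\|\rho\|_{L^{\fp_0}(\pi)}\le L$. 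Letting $\fp_0\to\infty$ yields $\esssup\rho\le L$ with respect to $\pi$, and hence $W_\infty(\mu,\nu)\le L$.

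The equality of the two expressions for $W_\infty$ in \eqref{eqn:winfty} is a Strassen-type theorem. I would cite it from \cite{Givens:1984to} or \cite{Jacelon:2025ab} rather than reprove it, since it is not part of the present statement.
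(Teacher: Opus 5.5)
Your proof is correct. It cannot follow the paper's proof, because the paper gives none: the proposition is recorded as a list of well-known facts, with a pointer to \cite{Givens:1984to} and \cite{Jacelon:2025ab}. What you supply instead is a self-contained derivation that uses only three ingredients: weak$^*$-compactness of $\Pi(\mu,\nu)$, H\"older's inequality for a fixed plan, and Kantorovich--Rubinstein duality for the topological claim. Compactness gives both the existence of optimal plans and the limit plan in part (3).

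\begin{itemize}
\item The bound $W_1\le W_\fp\le D^{(\fp-1)/\fp}W_1^{1/\fp}$ reduces every finite $\fp$ to $\fp=1$.
\item The mixture $\sum_i\lambda_i\pi_i$ of optimal plans is exactly what convexity of $W_\fp^\fp$ requires.
\item The weak$^*$ limit of optimal plans $\pi_{\fp_n}$ correctly gives $W_\infty\le\lim_{\fp\to\infty}W_\fp$. Testing against the continuous function $\rho^{\fp_0}$ before letting $\fp_0\to\infty$ is the right way to avoid the fact that $\rho^{\fp}$ changes with the plan.
\end{itemize}

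The citation buys brevity. It also covers the second (Strassen-type) expression for $W_\infty$ in \eqref{eqn:winfty}, which you rightly also defer. Your route buys transparency and makes clear that only compactness of $X$ is used.

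One slip should be removed. Your first candidate counterexample for $W_\infty$ is false. The only coupling of $\tfrac{n-1}{n}\delta_x+\tfrac1n\delta_{x_n}$ with $\delta_x$ is $\tfrac{n-1}{n}\delta_{(x,x)}+\tfrac1n\delta_{(x_n,x)}$, so $W_\infty=\rho(x,x_n)\to0$; non-isolated points play no role. Your replacement with a fixed $y\neq x$ is correct, and it works whenever $X$ has at least two points.

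For the direction ``weak$^*$ implies $W_1$'', it is worth stating explicitly the fact you rely on. A weak$^*$-convergent sequence of probability measures converges uniformly on norm-compact subsets of $C(X)$. Combined with Arzel\`a--Ascoli and Kantorovich--Rubinstein duality, this gives $W_1$-convergence.
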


\begin{definition} \label{def:schatten}
Let $A$ be a unital $\cs$-algebra with a faithful trace, and let $\fp\in[1,\infty]$. For $\fp<\infty$, the \emph{Schatten $\fp$-norm relative to $\tau\in T(A)$} is defined on $A$ by
\[
\|a\|_{\fp,\tau} = \tau(|a|^\fp)^{\frac{1}{\fp}}
\]
and the \emph{uniform tracial Schatten $\fp$-norm} on $A$ is
\[
\|a\|_\fp = \sup_{\tau\in T(A)}\|a\|_{\fp,\tau}.
\]
For $\fp=2$, this is the usual uniform tracial $2$-norm $\|a\|_2=\sup_\tau\tau(a^*a)^{\frac{1}{2}}$. For $p=\infty$, we set $\|a\|_\infty = \lim_{\fp\to\infty}\|a\|_\fp = \|a\|$. Given normal elements $x,y\in A$, we let $\sigma(x)$ and $\sigma(y)$ denote their spectra and $\mu_{\tau,x}$ and $\mu_{\tau,y}$ denote the Borel probability measures on $X:=
\sigma(x)\cup\sigma(y)$ induced by $\tau\in T(A)$. The \emph{$\fp$-norm unitary orbit distance relative to $\tau\in T(A)$} between $x$ and $y$ is
\[
d_{U,\fp,\tau}(x,y) = \inf_{u\in \mathcal{U}(A)} \|x - uyu^*\|_{\fp,\tau}
\]
and the \emph{(uniform) $\fp$-norm unitary orbit distance} between $x$ and $y$ is
\begin{equation} \label{eqn:dup}
d_{U,\fp}(x,y) = \inf_{u\in \mathcal{U}(A)} \|x - uyu^*\|_\fp.
\end{equation}
The \emph{$\fp$-Wasserstein distance relative to $\tau\in T(A)$} between $x$ and $y$ is
\[
W_{\fp,\tau}(x,y) = W_\fp(\mu_{\tau,x},\mu_{\tau,y})
\]
and the \emph{uniform $\fp$-Wasserstein distance} between $x$ and $y$ is
\begin{equation} \label{eqn:wp}
W_\fp(x,y) = \sup_{\tau\in T(A)} W_{\fp,\tau}(x,y).
\end{equation}
\end{definition}

\begin{proposition} \label{prop:metric}
Let $A$ be a unital $\cs$-algebra with a faithful trace $\tau\in T(A)$, and let $\fp\in[1,\infty]$. Then, $\|\cdot\|_{\fp,\tau}$ and $\|\cdot\|_{\fp}$ are norms on $A$, and $d_{U,\fp,\tau}$ (respectively, $d_{U,\fp}$) is a metric on the set of $\|\cdot\|_{\fp,\tau}$-closures (respectively, $\|\cdot\|_{\fp}$-closures) of unitary orbits of elements in $A$.
\end{proposition}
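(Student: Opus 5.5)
We verify the claims one at a time: first the norm axioms for the single-trace Schatten norm, then for the uniform norm, then the metric axioms for the orbit distances.

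\textbf{Step 1: $\|\cdot\|_{\fp,\tau}$ is a norm.} For $\fp=\infty$ this is the $\cs$-norm, so assume $\fp<\infty$.
\begin{enumerate}
\item Homogeneity is immediate from $|\lambda a|=|\lambda||a|$.
\item Definiteness uses faithfulness of $\tau$. If $\tau(|a|^\fp)=0$, then the positive element $|a|^\fp$ is zero. Hence $|a|=0$ by functional calculus, and so $a=0$.
\item The triangle inequality is the noncommutative Minkowski inequality. Let $\pi_\tau$ be the GNS representation. Faithfulness of $\tau$ makes $\pi_\tau$ injective, and $\tau$ extends to a faithful normal tracial state on the von Neumann algebra $\pi_\tau(A)''$. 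The classical noncommutative $L^\fp$-theory (Dixmier, Nelson) then gives $\|a+b\|_{\fp,\tau}\le\|a\|_{\fp,\tau}+\|b\|_{\fp,\tau}$.
\item Alternatively, one can use the duality formula $\|a\|_{\fp,\tau}=\sup\{|\tau(ab)| : \|b\|_{\fp',\tau}\le1\}$, which comes from Hölder's inequality for traces. A supremum of seminorms is subadditive.
\end{enumerate}

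\textbf{Step 2: $\|\cdot\|_\fp$ is a norm.} The uniform norm is the supremum over $T(A)$ of the seminorms $\|\cdot\|_{\fp,\sigma}$. These are seminorms for every tracial state $\sigma$, faithful or not, by the same Minkowski argument applied in the GNS von Neumann algebra of $\sigma$. Each is bounded by $\|a\|$, so the supremum is finite. It is subadditive and homogeneous. It is definite because it dominates $\|\cdot\|_{\fp,\tau}$ for the given faithful $\tau$.

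\textbf{Step 3: the orbit distances are metrics.} Write $d$ for either $d_{U,\fp,\tau}$ or $d_{U,\fp}$ and $\|\cdot\|$ for the corresponding norm. The key structural fact is unitary invariance, $\|uav^*\|=\|a\|$, which follows from traciality:
\[
|uav^*|^\fp=v|a|^\fp v^* \quad\text{so}\quad \tau(|uav^*|^\fp)=\tau(|a|^\fp).
\]
From this:
\begin{itemize}
\item $d(x,y)=\inf_u\|x-uyu^*\|$ depends only on the orbit closures. If $x_n=u_nxu_n^*\to x'$ in norm, then $\|x'-uyu^*\|$ is approximated by $\|x-u_n^*uyu^*u_n\|$, which is at least $d(x,y)$; the symmetric argument gives the reverse.
\item Symmetry holds since $\|x-uyu^*\|=\|u^*xu-y\|$.
\item For the triangle inequality, given $u,v$ we have $\|x-uvzv^*u^*\|\le\|x-uyu^*\|+\|u(y-vzv^*)u^*\|$. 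The second term equals $\|y-vzv^*\|$ by unitary invariance, and we take infima over $u,v$.
\item For definiteness, $d(x,y)=0$ means $x$ lies in the closure of the orbit of $y$. The closure of a union of orbits is unitarily invariant, so the two orbit closures coincide.
\end{itemize}

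\textbf{Main obstacle.} The only nontrivial input is the Minkowski inequality for tracial $\fp$-norms in Step 1. I would handle it by passing to the GNS von Neumann algebra and citing standard noncommutative integration, rather than reproving Hölder's inequality.
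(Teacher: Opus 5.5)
Your proposal is correct and follows essentially the same route as the paper. The only substantive input is the Minkowski inequality for $\|\cdot\|_{\fp,\sigma}$, which you obtain from Nelson's noncommutative integration in the GNS von Neumann algebra $\pi_\sigma(A)''$ for each $\sigma\in T(A)$, and faithfulness of $\tau$ then upgrades the seminorms $\|\cdot\|_{\fp,\tau}$ and $\|\cdot\|_\fp$ to norms; your explicit check of the metric axioms via unitary invariance just spells out what the paper treats as routine.
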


\begin{proof}
The only nontrivial assertion is the triangle inequality for $\|\cdot\|_{\fp,\sigma}$, $\sigma\in T(A)$. This is proved in \cite[Section 3]{Nelson:1974aa} in the context of a von Neumann algebra equipped with a faithful, normal, semifinite trace. In particular, this applies to the weak closure of the Gelfand--Naimark--Segal (GNS) representation $\pi_\sigma$ associated with $\sigma$, and hence to $A$. The assumed faithfulness of $\tau$ ensures that $\|\cdot\|_{\fp,\tau}$, hence $\|\cdot\|_{\fp}$, is a norm rather than a seminorm.
\end{proof}

\begin{example}[Finitely supported measures] \label{ex:finite}
A transference plan between two finitely supported measures $\mu=\sum_{i=1}^ms_i\delta_{\alpha_i}$ and $\nu=\sum_{j=1}^nt_j\delta_{\beta_j}$ can equivalently be described as an $m\times n$ matrix with entries $a_{ij}\in[0,1]$ that satisfy $\sum_{j=1}^na_{ij}=s_i$ for every $i$ and $\sum_{i=1}^ma_{ij}=t_j$ for every $j$. In the special case where $m=n$ and $s_i=t_i=\frac{1}{n}$ for every $i$, an \emph{optimal} transference plan can be found among permutation matrices, that is,
\begin{equation} \label{eqn:lp}
W_\fp\left(\frac{1}{n}\sum_{i=1}^n\delta_{\alpha_i},\frac{1}{n}\sum_{i=1}^n\delta_{\beta_i}\right) = \left(\inf_{\sigma\in S_n} \frac{1}{n} \sum_{i=1}^n \rho(\alpha_i,\beta_{\sigma(i)})^\fp\right)^\frac{1}{\fp}.
\end{equation}
For $\fp<\infty$, this latter fact follows from convexity of the map $\pi\mapsto\int \rho(x,y)^\fp\,d\pi(x,y)$, together with the Birkhoff--von Neumann theorem, which says that the extreme points of the set of bistochastic matrices are exactly the permutation matrices (see \cite{Birkhoff:1946aa}). A well-known consequence of Hall's marriage lemma is the analogous equation for $\fp=\infty$, that is,
\begin{equation} \label{eqn:linfty}
W_\infty\left(\frac{1}{n}\sum_{i=1}^n\delta_{\alpha_i},\frac{1}{n}\sum_{i=1}^n\delta_{\beta_i}\right) = \inf_{\sigma\in S_n} \max_{1\le i\le n}\rho(\alpha_i,\beta_{\sigma(i)}).
\end{equation}
Via diagonalisation, these equations imply that $d_{U,\fp}(x,y) \le W_\fp(x,y)$ for any $\fp\in[1,\infty]$ and any pair of normal matrices $x,y\in M_n$.  The reverse inequality for $\fp=2$ is proved in \cite{Hoffman:1953aa}.
\end{example}

\section{Unitary orbits in $\mathrm{II}_1$ factors} \label{section:II1}

We begin with the observation that the distances $d_{U,\fp}$, $\fp\in[1,\infty]$, are all metrics on the set of \emph{norm}-closed unitary orbits of normal elements in any $\mathrm{II}_1$ factor.

\begin{theorem}[Ding--Hadwin] \label{thm:hadwin}
Let $A$ be a unital commutative $\cs$-algebra, and let $\cM$ be a $\mathrm{II}_1$ factor with (unique) trace $\tau=\tau_\cM$. The following are equivalent for unital $^*$-homomorphisms $\varphi,\psi \colon A \to \cM$:
\begin{enumerate}
\item \label{it:hadwin1} $\varphi$ and $\psi$ are approximately unitarily equivalent in the norm topology;
\item \label{it:hadwin2} there is a sequence $(u_n)$ of unitaries in $\cM$ such that, for every $\fp\in[1,\infty]$ and every $a\in A$, $\lim_{n\to\infty}\|\varphi(a)-u_n\psi(a)u_n^*\|_\fp =0$;
\item \label{it:hadwin3} there is a sequence $(u_n)$ of unitaries in $\cM$ such that, for some $\fp\in[1,\infty]$ and every $a\in A$, $\lim_{n\to\infty}\|\varphi(a)-u_n\psi(a)u_n^*\|_\fp =0$;
\item \label{it:hadwin4} $\tau_\cM\circ\varphi = \tau_\cM\circ\psi$.
\end{enumerate}
\end{theorem}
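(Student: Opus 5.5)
The plan is to prove the cycle of implications (1)$\Rightarrow$(2)$\Rightarrow$(3)$\Rightarrow$(4)$\Rightarrow$(1). Only the last step carries real content; it is essentially Ding--Hadwin's theorem, and I would reprove it through spectral projections and the comparison theory of the $\mathrm{II}_1$ factor.

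For (1)$\Rightarrow$(2), suppose $\|\varphi(a)-u_n\psi(a)u_n^*\|\to0$ for every $a$. Since $\|b\|_\fp\le\|b\|$ for all $\fp$, the same unitaries work in every Schatten norm. The implication (2)$\Rightarrow$(3) is trivial. For (3)$\Rightarrow$(4), use the tracial H\"older inequality $|\tau(b)|\le\|b\|_1\le\|b\|_\fp$. This gives
\[
|\tau(\varphi(a))-\tau(\psi(a))|=|\tau(\varphi(a)-u_n\psi(a)u_n^*)|\le\|\varphi(a)-u_n\psi(a)u_n^*\|_\fp\to0,
\]
and hence $\tau\circ\varphi=\tau\circ\psi$.

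For (4)$\Rightarrow$(1), write $A=C(X)$ with $X$ compact Hausdorff. We may assume $X$ is metrizable: approximate unitary equivalence only has to be checked on finite subsets of $A$, and each finite subset lies in a separable unital $\cs$-subalgebra $C(Y)$ with $Y$ a metrizable quotient of $X$. Since $\cM$ is a von Neumann algebra, $\varphi$ and $\psi$ extend to spectral measures $E_\varphi$ and $E_\psi$ on the Borel sets of $X$, taking values in projections of $\cM$. Hypothesis (4) says that the induced probability measures agree, so $\tau(E_\varphi(S))=\tau(E_\psi(S))$ for every Borel $S$.

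Now fix $\eps>0$ and partition $X$ into finitely many disjoint Borel sets $S_1,\dots,S_k$ of diameter less than $\eps$, with chosen points $x_i\in S_i$. Put $p_i=E_\varphi(S_i)$ and $q_i=E_\psi(S_i)$. These are two families of mutually orthogonal projections, each summing to $1$, with $\tau(p_i)=\tau(q_i)$. In a $\mathrm{II}_1$ factor projections of equal trace are Murray--von Neumann equivalent, so there are partial isometries $v_i$ with $v_i^*v_i=q_i$ and $v_iv_i^*=p_i$. Then $u=\sum_i v_i$ is a unitary with $uq_iu^*=p_i$.

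The approximating elements are $\varphi(f)$ and $\psi(f)$. For any $f\in C(X)$ we have $\bigl\|\varphi(f)-\sum_i f(x_i)p_i\bigr\|\le\omega_f(\eps)$, where $\omega_f$ is the modulus of continuity of $f$; the same estimate holds for $\psi$ with the $q_i$. Conjugating the second estimate by $u$ and combining the two gives
\[
\|\varphi(f)-u\psi(f)u^*\|\le 2\omega_f(\eps).
\]
Letting $\eps\to0$ along a sequence yields unitaries $u_n$ such that $u_n\psi(f)u_n^*\to\varphi(f)$ in norm for every $f$, which is (1).

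The main obstacle is the reduction to metrizable $X$, together with keeping the spectral-measure estimates uniform over a finite set of functions; both are routine. The one structural input is that equality of traces implies Murray--von Neumann equivalence of projections, which is exactly where factoriality and type $\mathrm{II}_1$ are used.
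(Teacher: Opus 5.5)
Your proof is correct. The first three implications follow the paper: its bound $|\tau(xy)|\le\|x\|_1\|y\|$, obtained from polar decomposition and Cauchy--Schwarz, is your tracial H\"older step with $y=1$.

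The difference is in (4)$\Rightarrow$(1). The paper simply cites \cite[Theorem 3]{Ding:2005aa}. You instead reprove it by discretising the spectral measures and matching equal-trace projections via partial isometries, which is the same device the paper uses in the proof of Theorem~\ref{thm:hn}.

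What your route buys:
\begin{itemize}
\item It is elementary and self-contained.
\item It is quantitative. One unitary gives $\|\varphi(f)-u\psi(f)u^*\|\le2\omega_f(\eps)$ for all $f$ simultaneously. For metrizable $X$ this shows directly that $\sup_{f\in\mathrm{Lip}^\cc_1(X,\rho)}\|\varphi(f)-u\psi(f)u^*\|\le 2\eps$, i.e.\ $d^\cc_{U,\infty}(\varphi,\psi)=0$.
\end{itemize}
What it cannot do is leave the commutative, factor setting. It needs a partition of the spectrum, and it needs the scalar trace to determine Murray--von Neumann classes. By contrast, the equivalence reaches ASH and nuclear UCT domains, as Remark~\ref{rem:hadwin} records, and there no such partition is available.

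Two things you use tacitly are worth stating explicitly.
\begin{itemize}
\item Passing from $\tau\circ\varphi=\tau\circ\psi$ to $\tau(E_\varphi(S))=\tau(E_\psi(S))$ for Borel $S$ requires $\tau$ to be normal. Normality makes $\tau\circ E_\varphi$ countably additive, hence the Riesz measure of $\tau\circ\varphi$. This is automatic for the trace of a $\mathrm{II}_1$ factor.
\item For non-metrizable $X$, your reduction gives approximate unitary equivalence only on finite subsets. A single sequence as in (2)--(3) is available when $A$ is separable, a point the statement itself leaves implicit.
\end{itemize}
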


\begin{proof}
\eqref{it:hadwin1}$\implies$\eqref{it:hadwin2} holds because $\|\cdot\|_\fp\le \|\cdot\|_\infty = \|\cdot\|$ for every $\fp$. \eqref{it:hadwin2}$\implies$\eqref{it:hadwin3} is trivial. For \eqref{it:hadwin3}$\implies$\eqref{it:hadwin4}, we use the well-known fact (cf.\ \cite[Section 3]{Nelson:1974aa}) that
\begin{equation} \label{eqn:cs}
|\tau(xy)| \le \|x\|_1\|y\|
\end{equation}
for every $x,y\in\cM$. This follows from the polar decomposition $x=u|x|=u|x|^{\frac{1}{2}}|x|^{\frac{1}{2}}$ and the Cauchy--Schwarz inequality:
\[
|\tau(xy)| \le \tau\left(\left(u|x|^{\frac{1}{2}}\right)^*\left(u|x|^{\frac{1}{2}}\right)\right)^{\frac{1}{2}} \tau\left(|x|^{\frac{1}{2}}yy^*|x|^{\frac{1}{2}}\right)^{\frac{1}{2}} \le \tau(|x|)^{\frac{1}{2}} \tau(|x|)^{\frac{1}{2}} \|y\| = \tau(|x|)\|y\| = \|x\|_1\|y\|.
\]
By monotonicity of the Schatten $\fp$-norms, we then have for every $k\in\nn$ and $a\in A$ that
\begin{align*}
|\tau_\cM(\varphi(a)-\psi(a))| = |\tau_\cM(\varphi(a)-u_k\psi(a)u_k^*)| \le  \|\varphi(a)-u_k\psi(a)u_k^*\|_1 \le \|\varphi(a)-u_k\psi(a)u_k^*\|_\fp
\end{align*}
where $\fp$ and $(u_n)$ are as in \eqref{it:hadwin3}. It follows that $\tau_\cM(\varphi(a)) = \tau_\cM(\psi(a))$ for every $a\in A$, that is, \eqref{it:hadwin4} holds. The final implication \eqref{it:hadwin4}$\implies$\eqref{it:hadwin1} is \cite[Theorem 3]{Ding:2005aa}.
\end{proof}

\begin{corollary} \label{cor:metric}
Let $\cM$ be a $\mathrm{II}_1$ factor with trace $\tau=\tau_\cM$, and let $\fp\in[1,\infty]$. Then, $d_{U,\fp}$ defines a metric on the set of norm closures of unitary orbits of normal elements in $\cM$.
\end{corollary}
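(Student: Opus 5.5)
The argument has two parts: the metric axioms for $d_{U,\fp}$ on the set of norm-closed unitary orbits, and the well-definedness of $d_{U,\fp}$ on that set. The whole argument uses Proposition~\ref{prop:metric} and Theorem~\ref{thm:hadwin}.

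\textbf{Metric axioms.} Since $\tau_\cM$ is faithful, Proposition~\ref{prop:metric} applies. So $d_{U,\fp}$ is symmetric, satisfies the triangle inequality, and is a genuine metric on the set of $\|\cdot\|_\fp$-closures of unitary orbits.

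\textbf{Separation.} The task is to show that, for \emph{normal} elements, these closures coincide with the norm closures of the unitary orbits. One inclusion is immediate from $\|\cdot\|_\fp\le\|\cdot\|$: the norm closure of the orbit of $y$ lies inside its $\|\cdot\|_\fp$-closure.

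For the converse, suppose $x,y\in\cM$ are normal and $d_{U,\fp}(x,y)=0$. Choose unitaries $u_n$ with $\|x-u_nyu_n^*\|_\fp\to0$. Then:
\begin{itemize}
\item $u_nyu_n^*$ is normal, with norm bounded by $\|y\|$.
\item For every $*$-polynomial $q$ in two commuting variables, $\|q(x)-u_nq(y)u_n^*\|_\fp\to 0$. This is the step needing care.
\end{itemize}
To prove the second point, expand $q(x)-q(u_nyu_n^*)$ telescopically as a sum of products. Each product has a single factor $x-u_nyu_n^*$ (or its adjoint), with remaining factors bounded in operator norm by $\max(\|x\|,\|y\|)$. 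Apply the Hölder-type estimate $\|ab\|_\fp\le\|a\|_\fp\|b\|$, also $\|ba\|_\fp\le\|b\|\|a\|_\fp$, from \cite{Nelson:1974aa}. Since $\|a^*\|_\fp=\|a\|_\fp$, each term tends to zero.

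\textbf{Passing to $C(X)$ and concluding.} Take $X=\sigma(x)\cup\sigma(y)$, and let $\varphi,\psi\colon C(X)\to\cM$ be the functional calculi of $x$ and $y$. By Stone--Weierstrass, polynomials in $z,\bar z$ are norm dense in $C(X)$. Combining this density with the bound $\|\cdot\|_\fp\le\|\cdot\|$ extends the convergence to all $a\in C(X)$ via an $\eps/3$ argument. Thus condition~\eqref{it:hadwin3} of Theorem~\ref{thm:hadwin} holds. That theorem then gives that $\varphi$ and $\psi$ are approximately unitarily equivalent in norm, so $x$ lies in the norm closure of the unitary orbit of $y$.

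Hence the norm closures and the $\|\cdot\|_\fp$-closures of unitary orbits of normal elements coincide, and $d_{U,\fp}$ is a metric on them.

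\textbf{Well-definedness.} If $x'$ is in the norm closure of the orbit of $x$, then $d_{U,\fp}(x,x')\le\inf_u\|x'-uxu^*\|=0$. By the triangle inequality, $d_{U,\fp}$ therefore depends only on the closed orbits.

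The main obstacle is the polynomial approximation step, specifically handling the case $\fp<\infty$ via the Hölder inequality. The rest is bookkeeping.
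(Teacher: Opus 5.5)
Your proof is correct and takes essentially the paper's route. Both arguments reduce separation, via the functional calculus, to the Ding--Hadwin equivalences of Theorem~\ref{thm:hadwin}. The paper verifies condition~\eqref{it:hadwin4} directly, comparing moments via \eqref{eqn:cs} and then using $\sigma(x)=\supp(\mu_{\tau,x})=\supp(\mu_{\tau,y})=\sigma(y)$. You instead verify condition~\eqref{it:hadwin3} on $C(\sigma(x)\cup\sigma(y))$ by H\"older telescoping over $*$-polynomials, which handles the mixed moments $x^jx^{*k}$ explicitly and avoids needing $\sigma(x)=\sigma(y)$ beforehand.

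One small overstatement: you show only that \emph{normal} elements of the $\|\cdot\|_\fp$-closure of the orbit of $y$ lie in its norm closure, not that the two closures coincide. The normal case is, however, exactly what separation on norm-closed orbits of normal elements requires.
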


\begin{proof}
Let $x,y\in \cM$ be normal with $d_{U,\fp}(x,y)=0$. We must show that $x$ is approximately unitarily equivalent to $y$ in the norm topology. Note that $d_{U,1}(x,y)=0$. Together with \eqref{eqn:cs}, this implies that $\tau(x^m) = \tau(y^m)$ for every $m\in\nn$, hence $\mu_{\tau,x} = \mu_{\tau,y}$, hence $\sigma(x) = \supp(\mu_{\tau,x}) = \supp(\mu_{\tau,y}) = \sigma(y)$. Theorem~\ref{thm:hadwin} then implies that the functional calculus maps $\varphi_x,\varphi_y\colon C(\sigma(x)) \to \cM$ are approximately unitarily equivalent, which concludes the proof.
\end{proof}

\begin{remark} \label{rem:topology}
If $\fp$ is finite, then $d_{U,\fp}$ is topologically distinct from $d_U=d_{U,\infty}$. Note in particular that in a $\mathrm{II}_1$ factor, $d_{U,2}$ corresponds to the weak$^*$-topology
on spectral measures (by Corollary~\ref{cor:hw} and Proposition~\ref{prop:wassfacts}) while $d_U$ corresponds to the much stronger $W_\infty$-topology (by \cite[Theorems 1.1 and 1.4]{Hiai:1989aa}).
\end{remark}

\begin{remark} \label{rem:hadwin}
Though we do not need it in this article, we note that Theorem~\ref{thm:hadwin} holds for a much wider class of domains $A$, including separable, unital, approximately subhomogeneous $\cs$-algebras \cite{Li:2023aa}, and separable, unital, nuclear $\cs$-algebras satisfying the universal coefficient theorem \cite{Hua:2026aa}, further assuming in the latter case that $\varphi,\psi \colon A \to \cM$ are injective.
\end{remark}

\begin{lemma} \label{lemma:continuity}
Let $A$ be a unital $\cs$-algebra with a faithful trace. Then, for any $\fp\in[1,\infty]$, the functions $d_{U,\fp}$ and $W_\fp$ are norm continuous in each variable. 
\end{lemma}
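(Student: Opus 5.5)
For $d_{U,\fp}$ the plan is simply the triangle inequality; for $W_\fp$ it is a coupling argument that compares each spectral measure with the spectral measure of a nearby normal element.

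\emph{The distance $d_{U,\fp}$.} Let $x,x',y$ be normal. For any unitary $u$, Proposition~\ref{prop:metric} gives
\[
\|x-uyu^*\|_\fp \le \|x-x'\|_\fp + \|x'-uyu^*\|_\fp .
\]
Taking the infimum over $u$ yields $|d_{U,\fp}(x,y)-d_{U,\fp}(x',y)|\le \|x-x'\|_\fp$. By monotonicity of the Schatten norms, $\|x-x'\|_\fp\le\|x-x'\|$. So $d_{U,\fp}$ is $1$-Lipschitz in each variable for the operator norm. Symmetry, via $\|a^*\|_{\fp,\tau}=\|a\|_{\fp,\tau}$ and unitary invariance, handles the second variable.

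\emph{Setting up for $W_\fp$.} Regard all spectral measures as living on a common compact set, say a large closed disc $D\subseteq\cc$ containing the spectra of all elements under consideration. Pushing forward along inclusions does not change $W_\fp$. Since $W_\fp(x,y)=\sup_\tau W_\fp(\mu_{\tau,x},\mu_{\tau,y})$ and $W_\fp$ on measures satisfies the triangle inequality, it suffices to show the following: if $x_n\to x$ in norm with all $x_n$ normal, then
\[
\sup_{\tau\in T(A)} W_\fp(\mu_{\tau,x_n},\mu_{\tau,x})\to 0 .
\]

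\emph{The key estimate.} Given $\eps>0$, choose $\delta>0$ with $\|x_n-x\|<\delta$. I would aim for a uniform bound $W_\infty(\mu_{\tau,x_n},\mu_{\tau,x})\le\omega(\delta)$ with $\omega(\delta)\to0$. Proposition~\ref{prop:wassfacts} would then control every $W_\fp$ at once.

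Here is where the real obstacle lies. A genuine $W_\infty$ bound of this form is false in general for normal non-self-adjoint elements: the matching constant problem shows one only gets $W_\infty\le c\,\|x-x'\|$ with a universal $c$, and it is unclear that this holds in an arbitrary tracial $\cs$-algebra.

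So I would instead go through $W_1$, or through the weak$^*$ topology, which is all that is needed for finite $\fp$. Since $D$ is compact, $W_\fp^\fp\le \mathrm{diam}(D)^{\fp-1}W_1$. For $\fp=\infty$ I would instead use the known estimate
\[
W_\infty(x,y)\le c\,\|x-y\|,
\]
or the $\eps$-neighbourhood characterization in \eqref{eqn:winfty} via spectral projections in the GNS closure, where the $\mathrm{II}_1$/semifinite results \cite{Davidson:1986aa,Hiai:1989aa} apply.

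For $W_1$, use Kantorovich--Rubinstein duality:
\[
W_1(\mu_{\tau,x_n},\mu_{\tau,x})=\sup_{f}|\tau(f(x_n))-\tau(f(x))|,
\]
the supremum over $1$-Lipschitz functions $f$ on $D$. These $f$ form an equicontinuous, uniformly bounded (after normalising $f(0)=0$) family. By Arzel\`a--Ascoli, each is uniformly within $\eta$ of one of finitely many polynomials $p_1,\dots,p_k$ in $z,\bar z$. Functional calculus is norm-continuous in the normal element for a fixed polynomial: $\|p_i(x_n)-p_i(x)\|\to0$, using normality to order monomials $x^a(x^*)^b$. Hence, uniformly in $\tau$,
\[
|\tau(f(x_n))-\tau(f(x))|\le 2\eta+\max_i\|p_i(x_n)-p_i(x)\|,
\]
which gives uniform convergence of $W_1$ and therefore of $W_\fp$ for $\fp<\infty$.

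The main step I expect to need care is $\fp=\infty$. There I would invoke the semifinite von Neumann algebra bound $W_\infty\le c\,\|x-y\|$ in $\pi_\tau(A)''$ for each $\tau$, noting that the spectral measures are computed there.
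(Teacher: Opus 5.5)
Your proposal is correct. The $d_{U,\fp}$ part is the paper's argument: the triangle inequality together with $d_{U,\fp}\le d_U\le\|x-y\|$.

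For $W_\fp$ the routes differ:
\begin{itemize}
\item \emph{The paper} reduces every $\fp$ to $\fp=\infty$ via $W_\fp\le W_\infty$ and the triangle inequality. It then quotes norm continuity of $W_\infty$ from \cite[Lemma 3.3(i)]{Jacelon:2021wa}. That result rests on the matching bound $W_\infty\le c\,\|x-y\|$ in semifinite von Neumann algebras \cite[Theorem 1.1]{Hiai:1989aa}, applied in $\pi_\tau(A)''$.
\item \emph{You} handle finite $\fp$ directly. Your tools are Kantorovich--Rubinstein duality, a finite polynomial net from Arzel\`a--Ascoli and Stone--Weierstrass, norm continuity of the polynomial calculus $x\mapsto p(x,x^*)$, and $W_\fp^\fp\le\mathrm{diam}(D)^{\fp-1}W_1$. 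You use the matching theorem only for $\fp=\infty$.
\end{itemize}
Your route shows that finite-$\fp$ continuity is elementary: it is really weak$^*$-type control, uniform over $T(A)$. However, it gives only a non-explicit modulus of continuity, uniform on bounded sets. The paper's route gives, for all $\fp$ at once, the Lipschitz-type bound $\sup_\tau W_\fp(\mu_{\tau,x},\mu_{\tau,x'})\le c\,\|x-x'\|$, at the cost of the deep matching theorem.

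Your ``obstacle'' paragraph is confused and should be cut. A bound $W_\infty\le c\,\|x-x'\|$ with a universal constant $c$ is exactly a modulus $\omega(\delta)=c\delta\to0$. Holbrook's examples \cite{Holbrook:1992aa} only exclude $c=1$, which continuity does not need. The worry about arbitrary tracial $\cs$-algebras is also already resolved by the GNS argument you give at the end:
\begin{itemize}
\item $\tau$ extends to a faithful normal tracial state on $\pi_\tau(A)''$;
\item $\mu_{\tau,x}$ is the distribution of $\pi_\tau(x)$;
\item $\|\pi_\tau(x)-\pi_\tau(x')\|\le\|x-x'\|$.
\end{itemize}
So the bound is uniform in $\tau$. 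Once you invoke it for $\fp=\infty$, the finite-$\fp$ case follows from $W_\fp\le W_\infty$, and your $W_1$ argument is optional rather than necessary.
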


\begin{proof}
Continuity of $d_{U,\fp}$ holds because $d_{U,\fp}(x,y) \le d_U(x,y) \le \|x-y\|$. Continuity of $W_\fp\le W_\infty$ follows from the result for $W_\infty$, which is proved in \cite[Lemma 3.3(i)]{Jacelon:2021wa} (see also \cite[Corollary 3.5]{Jacelon:2021wa}) using \cite[Theorem 1.1]{Hiai:1989aa}.
\end{proof}

\begin{theorem}[Hiai--Nakamura] \label{thm:hn}
Let $\cM$ be a $\mathrm{II}_1$ factor with (unique) trace $\tau_\cM$, and let $\fp\in[1,\infty]$. Then, $d_{U,\fp}(x,y) \le W_\fp(x,y)$ for every pair of normal elements $x,y\in\cM$.
\end{theorem}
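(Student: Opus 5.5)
We reduce to the case of finitely supported spectral measures with equal atom weights, where the matrix computation of Example~\ref{ex:finite} applies, and then pass to the general case by approximation and Lemma~\ref{lemma:continuity}. Throughout, $X$ denotes a fixed compact set containing $\sigma(x)\cup\sigma(y)$ (for instance a large closed disc), so that all perturbations below stay in $X$.

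\emph{Step 1: discretisation.} Fix $\eps>0$ and $n\in\nn$, and partition $X$ into finitely many Borel sets of diameter less than $\eps$. Since $\cM$ is a $\mathrm{II}_1$ factor, the spectral projections of $x$ onto these cells can be subdivided into projections of trace exactly $1/n$, up to a leftover of trace at most (number of cells)$/n$. Using these, we build a normal $x'\in\cM$ of the form $x'=\sum_{i=1}^n\alpha_ip_i$, where $(p_i)$ are mutually orthogonal projections of trace $1/n$ summing to $1$ and $\alpha_i$ is a point of the cell containing the corresponding piece. Inside the leftover we simply reassign points, which costs at most $\mathrm{diam}(X)$ on a set of small trace. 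Then $x'$ commutes with $x$, and $\|x-x'\|_\fp$ is small: it is at most $\eps$ plus a term controlled by the trace of the leftover, once $n$ is taken large. This is enough for $\fp<\infty$. For $\fp=\infty$ we need norm closeness, so we take $n$ large and place the leftover mass at points of the cells it came from, which requires splitting cell projections into pieces of trace exactly $1/n$. The factor property makes this possible only approximately, so instead we allow the weights of the atoms to be arbitrary multiples of $1/n$ after rounding, with leftovers absorbed into neighbouring atoms within distance $\eps$.

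\emph{Step 2: the discrete case.} We construct $y'=\sum_{j=1}^n\beta_jq_j$ in the same way. By \eqref{eqn:lp} and \eqref{eqn:linfty}, there is a permutation $\sigma$ attaining $W_\fp(\mu_{\tau,x'},\mu_{\tau,y'})$. Since all the $p_i,q_j$ have trace $1/n$ in a factor, they are pairwise Murray--von Neumann equivalent, so there is a unitary $u$ with $uq_{\sigma(i)}u^*=p_i$. For this unitary,
\[
\|x'-uy'u^*\|_\fp^\fp=\tfrac1n\sum_i|\alpha_i-\beta_{\sigma(i)}|^\fp=W_\fp(x',y')^\fp,
\]
with the analogous maximum formula when $\fp=\infty$. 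Hence $d_{U,\fp}(x',y')\le W_\fp(x',y')$.

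\emph{Step 3: passing to the limit.} The triangle inequality gives
\[
d_{U,\fp}(x,y)\le\|x-x'\|_\fp+d_{U,\fp}(x',y')+\|y-y'\|_\fp.
\]
For $W_\fp$, the coupling $(x,x')$ induced by joint spectral measure gives $W_\fp(\mu_x,\mu_{x'})\le\|x-x'\|_\fp$, and similarly for $y$. Letting $\eps\to0$ and $n\to\infty$ then yields the result for $\fp<\infty$. For $\fp=\infty$, we use Proposition~\ref{prop:wassfacts}(3) together with $\|\cdot\|_\infty=\lim_\fp\|\cdot\|_\fp$. Note that this does not follow formally, since the unitaries depend on $\fp$. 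The cleaner route is to arrange norm approximation directly in Step 1, where $\|x-x'\|\le\eps$ whenever each piece is mapped within its own cell, or alternatively to invoke Hiai--Nakamura \cite[Theorem 1.1]{Hiai:1989aa} as cited earlier. The main obstacle is exactly this equal-weight discretisation with norm control for $\fp=\infty$. I would first try rational weights and common refinements: approximate each cell's trace by $k_i/n$ while moving the excess mass only between adjacent cells within distance $2\eps$.
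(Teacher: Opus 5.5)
\paragraph{The case $\fp=\infty$ has a genuine gap.} Your argument for $\fp<\infty$ works, subject to a small repair noted at the end. For $\fp=\infty$, however, the fix you propose cannot close the gap. No discretisation with atom masses in $\frac1n\zz$ is $W_\infty$-close to a general spectral measure, so this is not just a matter of rounding carefully.

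Let $x$ be a projection with irrational trace $t$, so $\mu_{\tau,x}=t\delta_1+(1-t)\delta_0$. Let $\mu'$ be any probability measure whose atoms have masses in $\frac1n\zz$, and suppose $W_\infty(\mu_{\tau,x},\mu')<\frac12$. Then the condition in \eqref{eqn:winfty} holds for some $r<\frac12$, and the neighbourhoods $\{0\}_r$ and $\{1\}_r$ are disjoint. Applying \eqref{eqn:winfty} to $U=\{1\}$ and $U=\{0\}$ gives $t\le\mu'(\{1\}_r)\le 1-\mu'(\{0\}_r)\le t$. Hence $\mu'(\{1\}_r)=t\notin\frac1n\zz$, a contradiction.

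So Step 3 can never make $W_\infty(\mu_{\tau,x},\mu_{\tau,x'})$ small. Since your $x'$ commutes with $x$, it cannot make $\|x-x'\|$ small either. Moving excess mass between adjacent cells does not help, because the two cells carrying the mass are at distance $1$. The limiting route also fails, for the reason you suspect: monotonicity only gives $\sup_{\fp<\infty}d_{U,\fp}\le d_U$, which is the wrong direction. Falling back on a citation is what the paper does for $\fp=\infty$. The relevant result, however, is \cite[Theorem 1.4]{Hiai:1989aa}; Theorem 1.1 is what underlies Lemma~\ref{lemma:continuity}.

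\paragraph{The missing idea.} In a $\mathrm{II}_1$ factor you never need equal weights, so Birkhoff--von Neumann and Hall can be bypassed entirely. First reduce in operator norm to finite spectra, $x=\sum_i\alpha_ip_i$ and $y=\sum_j\beta_jq_j$, using the Borel functional calculus and Lemma~\ref{lemma:continuity}. Then take \emph{any} coupling matrix $(a_{ij})$ of $\mu_{\tau,x}$ and $\mu_{\tau,y}$. Split $p_i=\sum_jp_{ij}$ and $q_j=\sum_iq_{ij}$ with $\tau(p_{ij})=\tau(q_{ij})=a_{ij}$; arbitrary real traces are allowed because $\cM$ is a $\mathrm{II}_1$ factor. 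Let $u$ be a unitary carrying each $q_{ij}$ onto $p_{ij}$.

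Then $uyu^*=\sum_{i,j}\beta_jp_{ij}$. This gives $\|x-uyu^*\|_\fp^\fp=\sum_{i,j}a_{ij}|\alpha_i-\beta_j|^\fp$ and $\|x-uyu^*\|=\max_{a_{ij}>0}|\alpha_i-\beta_j|$. Taking the infimum over couplings yields $d_{U,\fp}(x,y)\le W_\fp(x,y)$ for all $\fp\in[1,\infty]$ at once. This is the paper's argument. It is written out there only for $\fp<\infty$, but the same unitary handles $\fp=\infty$ verbatim.

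\paragraph{Repair for Step 1.} Arbitrary trace-$\frac1n$ subprojections of $\chi_{C_k}(x)$ need not commute with $x$, so the joint-spectral-measure coupling is not available as you state it. Either choose these subprojections inside a masa containing $x$ (such a masa is diffuse, since $\cM$ is a $\mathrm{II}_1$ factor), or first replace $x$ by $\sum_kc_k\chi_{C_k}(x)$.
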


\begin{proof}
We adapt the proof of \cite[Theorem 1.4]{Hiai:1989aa}, which establishes the inequality for $p=\infty$ and $\cM$ a $\sigma$-finite semifinite factor. By the Borel functional calculus and Lemma~\ref{lemma:continuity}, we may assume that $x$ and $y$ have finite spectra. More precisely, we will assume that there are complex numbers $\alpha_1,\dots,\alpha_m,\beta_1,\dots,\beta_n$ and sets of mutually orthogonal projections $\{p_1,\dots,p_m\}$ and $\{q_1,\dots,q_n\}$ such that $\sum_{i=1}^mp_i = \sum_{j=1}^nq_j =1$, $x=\sum_{i=1}^m\alpha_ip_i$ and $y=\sum_{j=1}^n\beta_jq_j$. The measures induced by $\tau_\cM$ on $\sigma(x)=\{\alpha_1,\dots,\alpha_m\}$ and $\sigma(y)=\{\beta_1,\dots,\beta_n\}$ are $\mu=\sum_{i=1}^m\tau_\cM(p_i)\delta_{\alpha_i}$ and $\nu=\sum_{j=1}^n\tau_\cM(q_j)\delta_{\beta_j}$, respectively. Let $\pi$ be a coupling between $\mu$ and $\nu$ such that the infimum defining the $2$-Wasserstein distance $W_2(\mu,\nu)$ is attained at $\pi$. As described in Example~\ref{ex:finite}, $\pi$ can be represented as an $m\times n$ matrix $(a_{ij})$. Since $\cM$ is a $\mathrm{II}_1$ factor, we can find sets of orthogonal projections $\{p_{ij}\}$ and $\{q_{ij}\}$, $(i,j) \in \{1,\dots,m\}\times\{1,\dots,n\}$, such that $p_i=\sum_{j=1}^np_{ij}$, $q_j=\sum_{i=1}^mq_{ij}$ and $\tau_\cM(p_{ij}) = a_{ij} = \tau_\cM(q_{ij})$ for every $i$ and $j$. Since each $p_{ij}$ is unitarily equivalent to $q_{ij}$ (by virtue of having the same trace), and the different $p_{ij}$ and $q_{ij}$ are orthogonal, we can find $u\in U(\cM)$ such that $uyu^*= \sum_{i,j}\beta_jp_{ij}$. For any $\fp\in[1,\infty)$, we then have
\[
d_{U,\fp}(x,y) \le \|x-uyu^*\|_\fp = \left\|\sum_{ij}\alpha_ip_{ij} - \sum_{i,j}\beta_jp_{ij}\right\|_\fp = \left(\sum_{ij}a_{ij}|\alpha_i-\beta_j|^\fp\right)^{\frac{1}{\fp}} = W_\fp(x,y). \qedhere
\]
\end{proof}

\begin{theorem}[Hoffman--Wielandt] \label{thm:hw}
Let $\cM$ be a von Neumann algebra equipped with a faithful trace $\tau$. Then, $W_{2,\tau}(x,y) \le \|x-y\|_{2,\tau}$ for any pair of normal elements $x,y\in\cM$.
\end{theorem}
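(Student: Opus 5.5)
Both sides depend only on spectral measures and second moments, so the plan is to reduce to finite spectra and then run the classical Hoffman--Wielandt argument inside $\cM$.

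Expanding the square gives
\[
\|x-y\|_{2,\tau}^2=\tau(x^*x)+\tau(y^*y)-2\,\mathrm{Re}\,\tau(x^*y).
\]
The first two terms are $\int|w|^2\,d\mu_{\tau,x}$ and $\int|z|^2\,d\mu_{\tau,y}$. For any coupling $\pi$ we likewise have
\[
\int|w-z|^2\,d\pi=\int|w|^2\,d\mu_{\tau,x}+\int|z|^2\,d\mu_{\tau,y}-2\,\mathrm{Re}\int\bar w z\,d\pi .
\]
Hence it suffices to produce a transference plan $\pi\in\Pi(\mu_{\tau,x},\mu_{\tau,y})$ with $\int \bar w z\,d\pi=\tau(x^*y)$.

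First I reduce to finite spectra. Approximate $x$ and $y$ in norm by normal elements with finite spectrum, namely Borel functions of $x$ and $y$ obtained by partitioning the spectrum into small Borel pieces. Norm convergence implies convergence of $\|x-y\|_{2,\tau}$. It also gives convergence of the spectral measures in $W_\infty$, hence in $W_2$: if $\|f(x)-x\|\le\eps$ with $f$ Borel and $|f(w)-w|\le\eps$, then the coupling $(\id,f)_*\mu_{\tau,x}$ shows $W_\infty(\mu_{\tau,x},\mu_{\tau,f(x)})\le\eps$. Here I have to be careful that $\tau$ may only be a faithful (not necessarily normal or finite-factor) trace, in which case the Borel functional calculus measure should be taken with respect to a normal trace. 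I would assume, as the statement implicitly does, that $\tau$ is a faithful normal tracial state, or else pass to the GNS closure where $\tau$ extends normally.

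Now write $x=\sum_i\alpha_ip_i$ and $y=\sum_j\beta_jq_j$ with spectral projections summing to $1$. Define $a_{ij}=\tau(p_iq_j)=\tau(p_iq_jp_i)\ge0$. Then $\sum_ja_{ij}=\tau(p_i)$ and $\sum_ia_{ij}=\tau(q_j)$, so $\pi=\sum a_{ij}\delta_{(\alpha_i,\beta_j)}$ is a transference plan, and
\[
\tau(x^*y)=\sum_{i,j}\bar\alpha_i\beta_j\,\tau(p_iq_j)=\int\bar wz\,d\pi .
\]
Therefore $W_{2,\tau}(x,y)^2\le\int|w-z|^2d\pi=\|x-y\|_{2,\tau}^2$. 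In fact this works directly without reduction in the finite-spectrum case, so the only real step is the approximation.

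I expect that approximation, including continuity of $W_{2,\tau}$ under norm perturbation, to be the main (mild) obstacle. I would handle it with the explicit $W_\infty$ bound via the coupling $(\id,f)_*\mu$, together with the triangle inequality for $W_2$.
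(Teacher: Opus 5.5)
Your proposal is correct and is essentially the paper's proof: reduce to finite spectra, use $a_{ij}=\tau(p_iq_j)$ as a coupling of $\mu_{\tau,x}$ and $\mu_{\tau,y}$, and expand $\|x-y\|_{2,\tau}^2$ to obtain $\sum_{i,j}a_{ij}|\alpha_i-\beta_j|^2$. The differences are only in bookkeeping. You verify $a_{ij}=\tau(p_iq_jp_i)\ge 0$ explicitly, and you justify the finite-spectrum reduction with the coupling $(\id,f)_*\mu_{\tau,x}$, correctly noting that this needs $\tau$ normal or a passage to $\pi_\tau(\cM)''$; the paper instead just cites the norm continuity of $W_\fp$ from Lemma~\ref{lemma:continuity}.
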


\begin{proof}
As in the proof of Theorem~\ref{thm:hn}, we may assume that there are sets of mutually orthogonal projections $\{p_1,\dots,p_m\}$ and $\{q_1,\dots,q_n\}$ such that $\sum_{i=1}^mp_i = \sum_{j=1}^nq_j =1$, $x=\sum_{i=1}^m\alpha_ip_i$ and $y=\sum_{j=1}^n\beta_jq_j$. The $m\times n$ matrix $(a_{ij}:=\tau(p_iq_j))$ is a coupling between $\mu_{\tau,x}=\sum_{i=1}^m\tau(p_i)\delta_{\alpha_i}$ and $\mu_{\tau,y}=\sum_{j=1}^n\tau(q_j)\delta_{\beta_j}$. By definition of $W_2$, we have
\begin{align*}
\|x-y\|_{2,\tau}^2 &= \tau\left(\left(\sum_{i=1}^m\alpha_ip_i - \sum_{j=1}^n\beta_jq_j\right)^*\left(\sum_{i=1}^m\alpha_ip_i - \sum_{j=1}^n\beta_jq_j\right)\right)\\
&= \sum_i|\alpha_i|^2\tau(p_i) - \sum_{i,j}\overline{\alpha_i}\beta_j\tau(p_iq_j) - \sum_{i,j}\overline{\beta_i}\alpha_j\tau(q_jp_i) + \sum_j|\beta_j|^2\tau(q_j)\\
&= \sum_{i,j}a_{ij}|\alpha_i-\beta_j|^2\\
&\ge \inf_{\pi\in\Pi(\mu_{\tau,x},\mu_{\tau,y})} \int_{\sigma(x)\times \sigma(y)} |w-z|^2 \,d\pi(w,z) = W_{2,\tau}(x,y)^2. \qedhere
\end{align*}
\end{proof}

\begin{corollary} \label{cor:hw}
If $\cM$ is a $\mathrm{II}_1$ factor and $x,y\in\cM$ are normal, then $d_{U,2}(x,y)=W_2(x,y)$.
\end{corollary}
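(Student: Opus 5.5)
The plan is to combine the two inequalities already established. For the upper bound, Theorem~\ref{thm:hn} with $\fp=2$ gives $d_{U,2}(x,y)\le W_2(x,y)$ directly. Since $\cM$ is a $\mathrm{II}_1$ factor, it has a unique tracial state $\tau=\tau_\cM$, which is faithful. Consequently the uniform quantities reduce to the single-trace ones: $\|\cdot\|_2=\|\cdot\|_{2,\tau}$, $d_{U,2}=d_{U,2,\tau}$ and $W_2=W_{2,\tau}$.

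For the lower bound, fix $u\in U(\cM)$. The element $uyu^*$ is normal, and its spectral measure coincides with that of $y$, because $\tau((uyu^*)^k(uyu^*)^{*l})=\tau(y^ky^{*l})$ by the trace property and a $^*$-polynomial argument. Hence $\mu_{\tau,uyu^*}=\mu_{\tau,y}$, and so $W_{2,\tau}(x,uyu^*)=W_{2,\tau}(x,y)$.

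Apply Theorem~\ref{thm:hw} to the normal pair $x$ and $uyu^*$. This gives
\[
W_2(x,y)=W_{2,\tau}(x,uyu^*)\le\|x-uyu^*\|_{2,\tau}=\|x-uyu^*\|_2.
\]
Taking the infimum over $u\in U(\cM)$ yields $W_2(x,y)\le d_{U,2}(x,y)$, and combining this with the upper bound proves the equality.

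There is no real obstacle beyond bookkeeping. The only point to check is that Theorem~\ref{thm:hw} applies to the normal pair $(x,uyu^*)$. Its reduction to finite spectra relies on the continuity in Lemma~\ref{lemma:continuity}, which is already built into the cited proof. The unitary invariance of spectral measures is the one small verification needed.
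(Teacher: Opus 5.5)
Your proposal is correct and follows the same route as the paper: Theorem~\ref{thm:hn} with $\fp=2$ gives the upper bound, and Theorem~\ref{thm:hw} applied to $x$ and $uyu^*$ gives the lower bound. The lower bound uses the unitary invariance of $W_2$, which the paper simply asserts and you verify via spectral measures; the lower bound then follows after taking the infimum over $u$.
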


\begin{proof}
Since the distance $W_2(x,y)$ is invariant under unitary conjugation, Theorem~\ref{thm:hw} implies that $W_2(x,y) \le d_{U,2}(x,y)$. By Theorem~\ref{thm:hn}, equality holds.
\end{proof}

Recall that a  length space is a metric space for which the distance between two points is equal to the infimum of the lengths of paths between them (see \cite[Chapter 2]{Burago:2001aa}). Every complete, locally compact length space is strictly intrinsic, meaning that there is always a length-minimising geodesic witnessing this infimum (see \cite[Theorem 2.5.23]{Burago:2001aa}).

\begin{theorem} \label{thm:lengthspaces}
Let $\cM$ be a $\mathrm{II}_1$ factor, and let $\frak{n}(\cM)$ and $\frak{sa}(\cM)$ denote the sets of approximate unitary equivalence classes of contractive normal and contractive self-adjoint elements of $\cM$, respectively.
\begin{enumerate}[(1)]
\item \label{it:length1} The metric space $(\frak{n}(\cM),d_{U,2})$ is isometric to the $2$-Wasserstein space $(\prob(\mathbb{D}),W_2)$ of probability measures on the unit disc. In particular, it is a compact length space.
\item \label{it:length2} For every $\fp\in[1,\infty)$, the metric space$(\frak{sa}(\cM),d_{U,\fp})$ is isometric to the compact length space $(\prob([-1,1]),W_\fp)$.
\item \label{it:length3} The metric space $(\frak{sa}(\cM),d_{U})$ is isometric to the (noncompact) complete, strictly intrinsic length space $(\prob([-1,1]),W_\infty)$.
\end{enumerate}
\end{theorem}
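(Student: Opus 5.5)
The plan is to construct, in each of the three cases, a map from equivalence classes to measures and show it is an isometric bijection. Define $\Phi\colon[x]\mapsto\mu_{\tau,x}$, with $\tau=\tau_\cM$.

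\emph{Well-definedness and injectivity.} If $x$ and $y$ are approximately unitarily equivalent, then $\tau(x^k x^{*l})=\tau(y^k y^{*l})$ for all $k,l$. Hence $\mu_{\tau,x}=\mu_{\tau,y}$, so $\Phi$ is well defined. Conversely, suppose $\mu_{\tau,x}=\mu_{\tau,y}$. Then $\sigma(x)=\supp\mu_{\tau,x}=\sigma(y)$, because $\tau$ is faithful. The functional calculus maps $C(\sigma(x))\to\cM$ then satisfy condition \eqref{it:hadwin4} of Theorem~\ref{thm:hadwin}, so $x$ and $y$ are approximately unitarily equivalent. Since $\|x\|\le1$ forces $\sigma(x)\subseteq\mathbb{D}$ (or $[-1,1]$ in the self-adjoint case), $\Phi$ lands in the stated measure spaces.

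\emph{Surjectivity.} Given $\mu\in\prob(\mathbb{D})$, I would realise it as the distribution of a normal contraction in $\cM$. First fix a diffuse abelian subalgebra $L^\infty[0,1]\subseteq\cM$, for instance a masa, on which $\tau$ restricts to Lebesgue measure. Then pick a Borel map $f\colon[0,1]\to\mathbb{D}$ with $f_*\mathrm{Leb}=\mu$; this exists because every Borel probability on a standard Borel space is such a pushforward. The element $x=f\in L^\infty[0,1]\subseteq\cM$ is a normal contraction with $\mu_{\tau,x}=\mu$, and the self-adjoint case is identical with $f$ real-valued. The main point needing care is that $\sigma(x)$ is the essential range of $f$, which equals $\supp\mu$, so that the spectral measure really is $\mu$.

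\emph{Isometry.} For \eqref{it:length1}, Corollary~\ref{cor:hw} gives $d_{U,2}(x,y)=W_2(\mu_{\tau,x},\mu_{\tau,y})$, which holds regardless of which representatives are chosen. For \eqref{it:length2}, Theorem~\ref{thm:hn} gives $d_{U,\fp}\le W_\fp$. For the reverse inequality on self-adjoints, I would approximate $x,y$ by finite-spectrum elements as in Theorem~\ref{thm:hw}, then by elements whose spectral projections have rational traces, and compress into a matrix subalgebra $M_N\subseteq\cM$ containing all these projections, up to unitary conjugation. There $d_{U,\fp}\ge W_\fp$ holds by the classical monotone rearrangement for real eigenvalues, together with \eqref{eqn:lp} and the Lidskii/majorisation inequality $\|x-y\|_\fp\ge\|\lambda(x)-\lambda(y)\|_\fp$. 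Continuity from Lemma~\ref{lemma:continuity} then passes to the limit, and alternatively \cite[Theorem 4.3]{Hiai:1989aa} can be cited directly. For \eqref{it:length3}, equality $d_U=W_\infty$ for self-adjoints is \cite[Theorems 1.1 and 1.4]{Hiai:1989aa}. I expect this reverse inequality for general $\fp$ to be the main obstacle, since the Hoffman--Wielandt trick is special to $\fp=2$.

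\emph{Geometric properties.} These are facts about Wasserstein space. For a compact convex $X$ and $\fp<\infty$, $(\prob(X),W_\fp)$ is compact by Proposition~\ref{prop:wassfacts}(1) and Prokhorov. It is geodesic via displacement interpolation: take an optimal plan $\pi$ and push it forward under $(w,z)\mapsto(1-t)w+tz$. Convexity of $X$ keeps these measures in $\prob(X)$, and the interpolation satisfies $W_\fp(\mu_s,\mu_t)=|t-s|W_\fp(\mu_0,\mu_1)$. The same displacement interpolation gives geodesics for $W_\infty$. Completeness of $W_\infty$ follows since it dominates $W_1$ and its balls are weak$^*$-closed. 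Non-compactness follows because the Dirac masses $\delta_{1/n}$ and the uniform measure are not close in $W_\infty$; more simply, $(1-\frac1n)\delta_0+\frac1n\delta_1$ are pairwise at $W_\infty$-distance $1$.
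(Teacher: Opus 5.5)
Your overall route is the paper's. You use the map $[x]\mapsto\mu_{\tau_\cM,x}$. It is well defined and injective by Ding--Hadwin (as in Corollary~\ref{cor:metric}) and surjective by Borel functional calculus on a diffuse abelian subalgebra. It is isometric in part~(1) by Corollary~\ref{cor:hw}, and in parts~(2)--(3) by Hiai--Nakamura.

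The one genuinely different ingredient is that you verify the Wasserstein geometry by hand rather than citing \cite{Lott:2009aa} and \cite[Proposition 3.4]{Jacelon:2025ab}. That covers Prokhorov compactness, straight-line displacement interpolation, completeness of $W_\infty$ via weak$^*$-closed balls, and the two-point example for noncompactness. This is fine and more self-contained. You should record that an optimal $W_\infty$-plan exists, by weak$^*$-compactness of $\Pi(\mu,\nu)$ and lower semicontinuity of $\pi\mapsto\max_{\supp\pi}\rho$. That is what makes the $W_\infty$ interpolation a geodesic and the space strictly intrinsic.

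\textbf{Gap in part~(2).} Your self-contained argument for $W_\fp\le d_{U,\fp}$ does not work.
\begin{itemize}
\item Having replaced $x,y$ by $x'=\sum_i\alpha_ip_i$ and $y'=\sum_j\beta_jq_j$, you cannot put all the $p_i$ and $q_j$ into one $M_N\subseteq\cM$ without changing $\|x'-y'\|_\fp$.
\item A common unitary conjugation preserves that norm, but cannot make $W^*(\{p_i\}\cup\{q_j\})$ finite dimensional. Two projections with diffuse relative angle already generate $M_2(L^\infty[0,1])$. Two families of three trace-$\frac13$ projections in free position generate the non-hyperfinite factor $L(\zz_3*\zz_3)$.
\item Conjugating $y'$ alone changes exactly the quantity you must bound.
\item Compressing by the trace-preserving conditional expectation $E$ onto an $M_N$ containing the $p_i$ does not help either. $E$ alters the distribution of $y'$ in the wrong direction: for $x'=0$, $W_\fp(0,E(y'))=\|E(y')\|_\fp$ can be strictly smaller than $W_\fp(0,y')=\|y'\|_\fp$.
\end{itemize}
So the finite-dimensional Lidskii inequality cannot be transported this way. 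Part~(2) genuinely rests on \cite[Theorem 4.3]{Hiai:1989aa}, exactly as in the paper. An intrinsic proof would instead use spectral scales $\lambda_x,\lambda_y\colon[0,1)\to\rr$. One has $W_\fp(x,y)=\|\lambda_x-\lambda_y\|_{L^\fp[0,1]}$ by optimality of the monotone coupling on the line. Combine this with a Lidskii-type majorisation $\lambda_x-\lambda_y\prec\lambda_{x-y}$ in $\cM$.

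\textbf{Citation in part~(3).} The paper obtains the sharp bound $W_\infty\le d_U$ on self-adjoints from the Weyl-type inequality \cite[Theorem 2.1(3)]{Hiai:1989aa}, paired with Theorem~\ref{thm:hn}. It invokes \cite[Theorems 1.1 and 1.4]{Hiai:1989aa} only for the topological comparison in the normal case (Remark~\ref{rem:topology}). Make sure the reference you cite actually gives equality, not a bound up to a constant.
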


\begin{proof}
Recall from Corollary~\ref{cor:metric} that the distances $d_{U,\fp}$ are indeed metrics on $\frak{n}(\cM)\supseteq\frak{sa}(\cM)$.
\begin{enumerate}[(1)]
\item The map $(\frak{n}(\cM),d_{U,2}) \to (\prob(\sigma(x)),W_2) \subseteq (\prob(\mathbb{D}),W_2)$, $[x] \mapsto \mu_{\tau_\cM,x}$ is surjective (via the Borel functional calculus) and isometric (by Corollary~\ref{cor:hw}). Since $\mathbb{D}\subseteq\cc$ is compact and convex, it is a compact length space (length-minimising geodesics are straight lines). It follows from \cite[Corollary 2.7]{Lott:2009aa} that $(\prob(\mathbb{D}),W_2) \cong (\frak{n}(\cM),d_{U,2})$ is also a compact length space. Moreover, by \cite[Proposition 2.10]{Lott:2009aa}, every length-minimising geodesic between classes $[x_0],[x_1]\in\frak{n}(\cM)$ is a `displacement interpolation' $\{\mu_t\}_{t\in[0,1]}$ corresponding to an optimal dynamical transference plan that describes how the spectral measure of $x_0$ is transported over time to the spectral measure of $x_1$.
\item It follows from \cite[Theorem 4.3]{Hiai:1989aa} that the map  $[x]\mapsto\mu_{\tau_\cM,x}$ defines an isometry between $(\frak{sa}(\cM),d_{U,\fp})$ and $(\prob([-1,1]),W_\fp)$, which is a compact length space by \cite[Remark 2.8]{Lott:2009aa}.
\item Similarly, by \cite[Theorem 2.1(3)]{Hiai:1989aa}, $[x]\mapsto\mu_{\tau_\cM,x}$ is an isometry between $(\frak{sa}(\cM),d_{U})$ and $(\prob([-1,1]),W_\infty)$, which is complete and strictly intrinsic by \cite[Proposition 3.4]{Jacelon:2025ab}. \qedhere
\end{enumerate}
\end{proof}

\section{Unitary orbits in classifiable $\mathrm{C}^*$-algebras} \label{section:cstar}

In this section, we use classification \cite{Carrion:wz} and optimal transport theory \cite{Jacelon:2021wa} to move from the von Neumann world to the world of tracial $\cs$-algebras. Before embarking on this task, and with apologies to my coauthors, I would like to comment on an aspect of \cite{Jacelon:2021wa} that I now question. One fact we relied on was \cite[Theorem 2.1(1)]{Hiai:1989aa}, which asserts that $d_U(x,y)=W_\infty(x,y)$ for every pair of commuting normal elements in a semifinite von Neumann algebra. But this cannot be correct as stated, given the counterexamples of \cite{Holbrook:1992aa}, and indeed Hiai and Nakamura's appeal to \cite[Proposition 2.3]{Davidson:1986aa} only seems to give that $W_\infty(x,y) \le \|x-y\|$ for such $x$ and $y$. The effect on \cite{Jacelon:2021wa} is that $d_U(x,y)$ is not the right upper bound for $W_\infty(x,y)$ in \cite[Theorem 4.10]{Jacelon:2021wa}, which instead shows for the normal elements it considers that
\[
d_U(x,y) \le W_\infty(x,y) \le \inf\{\|x-uyu^*\| \mid u\in U(A) \text{ with } [x,uyu^*]=0 \}.
\]
That said, this is the only significant effect, and the \emph{de facto} r\^ole of \cite[Theorem 4.10]{Jacelon:2021wa} is to introduce the idea of continuous transport that is more carefully employed in \cite[Theorem 4.11]{Jacelon:2021wa}. Our other applications of \cite[Theorem 2.1(1)]{Hiai:1989aa} were unnecessary, a fact that I emphasise below in the proof of Theorem~\ref{thm:lowerbound}.

\begin{definition} \label{def:core}
Let $(X,\rho)$ be a compact metric space, let $A$ be a unital $\cs$-algebra with a faithful trace, let $\fp\in[1,\infty]$, and let $\ff=\rr$ or $\ff=\cc$. We write $\mu_\sigma$ for the Borel probability measure on $X$ corresponding to a given trace $\sigma\in T(C(X)) \cong \prob(X)$, and we define
\begin{equation} \label{eqn:lipf}
\mathrm{Lip}^\ff_1(X,\rho) = \{f\colon X\to \ff \mid (\forall\, x,y\in X)\: |f(x)-f(y)|\le \rho(x,y)\}.
\end{equation}
For unital $^*$-monomorphisms $\varphi,\psi\colon C(X)\to A$, we define the \emph{(uniform) $\fp$-norm unitary orbit distance}
\begin{equation} \label{eqn:duf}
d_{U,\fp}^\ff(\varphi,\psi) = \inf_{u\in U(A)} \sup_{f\in\mathrm{Lip}^\ff_1(X,\rho)}\|\varphi(f)-u\psi(f)u^*\|_\fp
\end{equation}
and the \emph{(uniform) tracial $\fp$-Wasserstein distance}
\begin{equation} \label{eqn:tracialwass}
W_\fp(\varphi,\psi) = \sup_{\tau\in T(A)} W_\fp(\mu_{\tau\circ\varphi},\mu_{\tau\circ\psi}).
\end{equation}
\end{definition}

A useful basic fact about equation \eqref{eqn:duf} is that it can equivalently be written as
\begin{equation} \label{eqn:aa}
d_{U,\fp}^\ff(\varphi,\psi) = \inf_{u\in U(A)} \sup_{f\in\mathrm{Lip}^\ff_1(X,\rho)\cap B_{\mathrm{diam}(X,\rho)}}\|\varphi(f)-u\psi(f)u^*\|_\fp
\end{equation}
where, given $r>0$, $B_r=B_r(C(X))$ denotes the ball $\{f\in C(X) \mid \|f\|\le r\}$. This is because $\mathrm{Lip}^\ff_1(X,\rho) = \ff \cdot 1 + \mathrm{Lip}^\ff_1(X,\rho)\cap B_{\mathrm{diam}(X,\rho)}$, so if $\varphi,\psi \colon C(X) \to A$ are \emph{unital} $^*$-homomorphisms, then one reference set is as good as the other for measuring unitary conjugacy.

The distances $d_{U,\fp}^\ff$ are particularly meaningful in the context of classification \cite{Carrion:wz}. If $A$ and $B$ are unital $\cs$-algebras, let us write  $\mathrm{Emb}^1(B,A)$ for the set of unital $^*$-monomorphisms $B\to A$.

\begin{theorem} \label{thm:cstarmetric}
Let $A$ be a simple, separable, unital, nuclear, $\js$-stable $\cs$-algebra with $T(A)\ne\emptyset$, let $(X,\rho)$ be a compact metric space such that $K_*(C(X))$ is finitely generated and torsion free, and let $\varphi\in\mathrm{Emb}^1(C(X),A)$.  Suppose further either that $A$ has real rank zero or that $K_1(C(X))=0$. Then, for every $\fp\in[1,\infty]$ and for $\ff=\rr$ or $\ff=\cc$, $d_{U,\fp}^\ff$ is a metric on the set of (norm) approximate unitary equivalence classes of elements in
\begin{equation} \label{eqn:emb}
\mathrm{Emb}^1(C(X),A)_{\varphi} := \{\psi \in\mathrm{Emb}^1(C(X),A) \mid K_*(\psi) = K_*(\varphi)\}.
\end{equation}
\end{theorem}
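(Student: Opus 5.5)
We must show three things: $d_{U,\fp}^\ff$ is well defined on approximate unitary equivalence classes, it is a pseudometric, and it separates classes. The first two are routine. Conjugating $\psi$ by a unitary, or replacing $\psi$ by an approximately unitarily equivalent map, changes $d_{U,\fp}^\ff$ by an arbitrarily small amount. Indeed, by \eqref{eqn:aa} we only need to test $f$ in $\mathrm{Lip}^\ff_1(X,\rho)\cap B_{\mathrm{diam}(X,\rho)}$. By Arzelà--Ascoli this set is norm compact, so pointwise norm approximation on $C(X)$ upgrades to uniform approximation over this set. We also use $\|\cdot\|_\fp\le\|\cdot\|$.

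Symmetry follows from unitary invariance of $\|\cdot\|_\fp$, replacing $u$ by $u^*$. For the triangle inequality, take near-optimal $u$ for $(\varphi,\psi)$ and $v$ for $(\psi,\chi)$. Then $uv$ works for $(\varphi,\chi)$, by the triangle inequality for $\|\cdot\|_\fp$ (Proposition~\ref{prop:metric}) together with unitary invariance. Note that faithfulness of traces holds because $A$ is simple.

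The substance is definiteness. Suppose $\psi\in\mathrm{Emb}^1(C(X),A)_\varphi$ and $d_{U,\fp}^\ff(\varphi,\psi)=0$. Then $d_{U,1}^\ff(\varphi,\psi)=0$ by monotonicity of Schatten norms. Applying \eqref{eqn:cs} in the GNS closure of each $\tau\in T(A)$ gives $|\tau(\varphi(f))-\tau(\psi(f))|\le\|\varphi(f)-u\psi(f)u^*\|_1$ for every $f\in\mathrm{Lip}^\ff_1$. Real Lipschitz functions span a dense subspace of $C(X)$, so $\tau\circ\varphi=\tau\circ\psi$ for all $\tau\in T(A)$. We then invoke the classification of unital embeddings from \cite{Carrion:wz}: for a classifiable codomain $A$, two such maps are approximately unitarily equivalent if and only if they agree on the invariant $\underline{K}T_u$. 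This invariant consists of total $K$-theory, traces, and the Hausdorff-ized unitary algebraic $K_1$, compatibly paired.

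We need the hypotheses to reduce $\underline{K}T_u$ to $K_*$ together with traces. Since $K_*(C(X))$ is finitely generated and torsion free, the UCT gives that $K_*$ determines the maps on total $K$-theory. For the $\overline{K}_1^{\mathrm{alg}}$ component, the relevant group $\overline{K}_1^{\mathrm{alg}}(A)$ sits in an extension of $K_1(A)$ by $\mathrm{Aff}\,T(A)/\overline{\rho(K_0(A))}$. If $A$ has real rank zero, $\rho(K_0(A))$ is dense in $\mathrm{Aff}\,T(A)$, so the quotient vanishes and $\overline{K}_1^{\mathrm{alg}}(A)\cong K_1(A)$. In that case the unitary component is determined by $K_1$. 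If instead $K_1(C(X))=0$, the map on $\overline{K}_1^{\mathrm{alg}}$ is forced by the traces via the de la Harpe--Skandalis determinant compatibility, since $\overline{K}_1^{\mathrm{alg}}(C(X))$ is a quotient of $\mathrm{Aff}\,T(C(X))$.

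Hence $\varphi$ and $\psi$ have the same $\underline{K}T_u$ invariant, so they are approximately unitarily equivalent. The main obstacle I expect is making this reduction of the invariant rigorous, including the split-exact-sequence bookkeeping; the case $K_1(C(X))=0$ is the more delicate one.
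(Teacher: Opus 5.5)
Your proposal is correct and follows essentially the same route as the paper. It gets well-definedness from Arzel\`a--Ascoli compactness and \eqref{eqn:aa}, and trace agreement from \eqref{eqn:cs} in the GNS closures plus density of Lipschitz functions. It then reduces $\underline{K}T_u$ via freeness of $K_*(C(X))$ and the Thomsen sequence in the two cases before invoking \cite[Theorem B]{Carrion:wz}, and your explicit check of symmetry and the triangle inequality fills in a step the paper leaves implicit.
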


\begin{proof}
Let $\fp\in[1,\infty]$.  Since $\mathrm{Lip}^\ff_1(X,\rho) \cap B_{\mathrm{diam}(X,\rho)}$ is compact (by Arzel\`{a}--Ascoli), it follows from \eqref{eqn:aa} that $d_{U,\fp}^\ff(\varphi,\psi) \le d_{U,\infty}^\ff(\varphi,\psi) = 0$ for any $\psi \in\mathrm{Emb}^1(C(X),A)$ that is approximately unitarily equivalent to $\varphi$. In other words, $d_{U,\fp}^\ff$ is well defined on approximate unitary equivalence classes of unital embeddings. Suppose conversely that $\psi\in\mathrm{Emb}^1(C(X),A)_{\varphi}$ with $d_{U,\fp}^\ff(\varphi,\psi)=0$. We must show that $\varphi$ is approximately unitarily equivalent to $\psi$ (in norm). By definition of $d_{U,\fp}^\ff$, we know that for every $f\in\mathrm{Lip}^\ff_1(X,\rho)$, $\varphi(f)$ is approximately unitarily equivalent in $\fp$-norm to $\psi(f)$, which, as in the proof of Theorem~\ref{thm:hadwin}, implies that $\tau(\varphi(f)) = \tau(\psi(f))$ for every $\tau\in T(A)$. (Note that the polar decomposition in the proof can be taken in the weak closure of the GNS representation associated with $\tau$.) Since the linear span of $\mathrm{Lip}^\ff_1(X,\rho)$ is dense in $C(X)$, it follows that $\tau\circ\varphi=\tau\circ\psi$ for every $\tau\in T(A)$, that is, $T(\varphi)=T(\psi)$. Since $K_*(C(X))$ is finitely generated and torsion free, the condition $K_*(\psi) = K_*(\varphi)$ is equivalent to agreement of $\varphi$ and $\psi$ in `total $K$-theory' (see \cite[Section 2.3]{Carrion:wz}). The remaining assumptions on $A$ and $C(X)$ ensure that the $\overline{K}_1^{\mathrm{alg}}$ component of the total invariant $\underline{K}T_u$ also disappears (see \cite[Sections 2.2 and 3.2]{Carrion:wz}). Here, $\overline{K}_1^{\mathrm{alg}}(B)$ is the Hausdorffised unitary algebraic $K_1$-group of a unital $\cs$-algebra $B$. It is related to $K$-theory and traces via the Thomsen exact sequence
\[
\begin{tikzcd}
0 \arrow[r] & \mathrm{Aff}(T(B))/\overline{\rho_A(K_0(B))} \arrow[r] & \overline{K_1}^{\mathrm{alg}}(B) \arrow[r] & K_1(B) \arrow[r] & 0.
\end{tikzcd}
\]
If $A$ has real rank zero, then $K_0(A)$ has dense image $\rho_A(K_0(A))$ in the set $\mathrm{Aff}(T(A))$ of continuous affine functions $T(A)\to\rr$ (see \cite[Theorem 6.9.3]{Blackadar:1998qf}), so $\overline{K}_1^{\mathrm{alg}}(A) \cong K_1(A)$. If $K_1(C(X))=0$, then $\overline{K}_1^{\mathrm{alg}}(C(X)) \cong \mathrm{Aff}(T(C(X)))/\overline{\rho_{C(X)}(K_0(C(X)))}$. In both cases, the equalities $K_*(\psi) = K_*(\varphi)$ and $T(\varphi)=T(\psi)$ therefore imply that $\underline{K}T_u(\varphi) = \underline{K}T_u(\psi)$. It now follows from the classification of morphisms \cite[Theorem B]{Carrion:wz} that $\varphi$ and $\psi$ are indeed approximately unitarily equivalent.
 \end{proof}
 
For measure-theoretic computation of these metrics, we turn to optimal transport theory.

\begin{definition}[cf.\ {\cite[Definition 2.6]{Jacelon:2021vc}}] \label{def:transport}
For $\fp\in[1,\infty]$, the \emph{continuous $\fp$-transport constant} of a nonempty compact, connected metric space $(X,\rho)$ is the least $k=k_{(X,\rho,\fp)} \in [1,\infty]$ such that, for every pair of faithful, diffuse measures $\mu,\nu\in\prob(X)$ and every $\eps>0$, there is a homeomorphism $h\colon X \to X$ that is homotopic to the identity map and satisfies
\begin{equation} \label{eqn:transport1}
W_\infty(h_*\nu,\mu) \le \eps
\end{equation}
and
\begin{equation} \label{eqn:transport2}
\|\rho(x,h(x))\|_{L^{\fp}(X,\nu)} \le k \cdot W_\fp(\mu,\nu) + \eps.
\end{equation}
\end{definition}

As noted in \cite[Remark 2.7]{Jacelon:2021vc}, all of the Wasserstein metrics are topologically equivalent on the set of faithful measures on $X$, so $W_\infty$ can be replaced by any other $W_\fp$ in \eqref{eqn:transport1}. We also automatically have that $k_{(X,\rho,\fp)} \ge 1$. This is because the transport map $h$ defines a transference plan $\pi:=(\id\times h)_*\nu$ between $\nu$ and $h_*\nu$. By definition of $W_\fp$ \eqref{eqn:transfer}, we therefore have
\[
\|\rho(x,h(x))\|_{L^{\fp}(X,\nu)} = \left(\int_{X} \rho(x,h(x))^\fp\, d\nu(x)\right)^\frac{1}{\fp} = \left(\int_{X\times X} \rho(w,z)^\fp\, d\pi(w,z)\right)^\frac{1}{\fp} \ge W_\fp(\nu,h_*\nu)
\]
and so
\[
W_\fp(\mu,\nu) \le W_\fp(\mu,h_*\nu) + W_\fp(\nu,h_*\nu) \le \eps + \|\rho(x,h(x))\|_{L^{\fp}(X,\nu)}.
\]
We now consider cases where this minimal possible value of $k_{(X,\rho,\fp)}$ is actually attained. We look for examples among strictly intrinsic length spaces.

\begin{proposition} \label{prop:interval}
Let $(X,\rho)$ be a Euclidean interval. Then, $k_{(X,\rho,\fp)}=1$ for every $\fp\in[1,\infty]$.
\end{proposition}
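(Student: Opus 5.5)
The plan is to use the monotone rearrangement (the optimal transport map on the line). Write $X=[a,b]$ with $\rho(s,t)=|s-t|$. Let $\mu,\nu\in\prob(X)$ be faithful and diffuse, and let $F_\mu,F_\nu\colon X\to[0,1]$ be their cumulative distribution functions. Because both measures are diffuse, $F_\mu$ and $F_\nu$ are continuous. Because both are faithful, they are strictly increasing. Hence each is a homeomorphism $X\to[0,1]$ fixing endpoints in the obvious sense ($a\mapsto 0$, $b\mapsto 1$). I would take
\[
h:=F_\mu^{-1}\circ F_\nu .
\]
This is an increasing homeomorphism of $X$ onto itself. It is homotopic to the identity via the straight-line homotopy $(1-t)\,\id+t\,h$, which stays in $X$ because $X$ is convex.

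The first condition to check is \eqref{eqn:transport1}, and here I would show $h_*\nu=\mu$ exactly. Since $F_\nu$ is continuous, $(F_\nu)_*\nu$ is Lebesgue measure on $[0,1]$. Pushing forward by $F_\mu^{-1}$ then gives $\mu$, because $F_\mu^{-1}$ pulls back intervals $[0,s]$ to $[a,F_\mu^{-1}(s)]$, which has $\mu$-measure $s$. So $W_\infty(h_*\nu,\mu)=0\le\eps$.

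The second condition, \eqref{eqn:transport2}, reduces to the classical fact that the monotone coupling is optimal on $\rr$. For every convex cost $c(s,t)=|s-t|^\fp$ with $\fp\in[1,\infty)$,
\[
W_\fp(\mu,\nu)^\fp=\int_0^1|F_\mu^{-1}(s)-F_\nu^{-1}(s)|^\fp\,ds .
\]
Changing variables $s=F_\nu(x)$ turns the right-hand side into $\int_X|h(x)-x|^\fp\,d\nu(x)$. This gives $\|\rho(x,h(x))\|_{L^\fp(X,\nu)}=W_\fp(\mu,\nu)$ with $k=1$ and no $\eps$ needed.

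I expect the main obstacle to be justifying this optimality of the monotone coupling, which I would either cite from standard optimal transport references or prove briefly. The argument I would give rests on cyclical monotonicity: for $x<x'$ and $y<y'$, convexity of $t\mapsto|t|^\fp$ gives
\[
|x-y|^\fp+|x'-y'|^\fp\le|x-y'|^\fp+|x'-y|^\fp .
\]
So any optimal plan, which exists by compactness, must be supported on a monotone set, and the only plan in $\Pi(\nu,\mu)$ with monotone support is $(\id\times h)_*\nu$. An alternative, more self-contained route is to discretise to uniform atoms and use \eqref{eqn:lp} together with the rearrangement inequality, then pass to the limit using weak$^*$ continuity of $W_\fp$ from Proposition~\ref{prop:wassfacts}.

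The case $\fp=\infty$ then follows by letting $\fp\to\infty$. Since $h$ does not depend on $\fp$, we get $\|\rho(x,h(x))\|_{L^\infty(\nu)}=\lim_\fp W_\fp(\mu,\nu)=W_\infty(\mu,\nu)$ by Proposition~\ref{prop:wassfacts}(3); here we use that $\nu$ is faithful and $h$ is continuous, so the essential supremum is a genuine supremum. Combined with the general lower bound $k\ge1$ established above, this gives $k_{(X,\rho,\fp)}=1$.
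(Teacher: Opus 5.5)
Your proposal is correct and is essentially the paper's proof. The paper also takes the increasing rearrangement $h=F^{-1}\circ G$, but simply cites earlier work for $h_*\nu=\mu$ and $\|\rho(x,h(x))\|_{L^{\fp}(X,\nu)}\le W_\fp(\mu,\nu)$; you justify these directly, and you also supply the homotopy to the identity and the $\fp=\infty$ limit that the paper leaves implicit.

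One small caveat concerns $\fp=1$: the cost $|s-t|$ is not strictly convex, so it is false that every optimal plan has monotone support. Get that case instead from the quantile formula, from your discretisation route, or by letting $\fp\downarrow1$ (using $W_\fp(\mu,\nu)\to W_1(\mu,\nu)$), rather than from the cyclical-monotonicity argument.
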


\begin{proof}
We may assume that $X=[0,1]$. Let $\mu,\nu\in\prob([0,1])$, and let $F(t)=\mu([0,t))$ and $G(t)=\nu([0,t))$ be their cumulative distribution functions. As noted in \cite[Proposition 2.2]{Jacelon:2021wa} and \cite[Proposition 2.5]{Jacelon:2021vc}, if $\mu$ and $\nu$ are faithful and diffuse, then the increasing rearrangement map $h=F^{-1}\circ G \colon [0,1] \to [0,1]$ is a homeomorphism that satisfies $h_*\nu=\mu$ and $\|\rho(x,h(x))\|_{L^{\fp}([0,1],\nu)} \le W_\fp(\mu,\nu)$ for every $\fp$. By definition, this demonstrates that $k_{(X,\rho,\fp)}=1$.
\end{proof}

\begin{proposition} \label{prop:circle}
Let $(X,\rho)$ be a circle with its intrinsic metric. Then, $k_{(X,\rho,\fp)}=1$ for every $\fp\in[1,\infty]$.
\end{proposition}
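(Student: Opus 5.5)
We would follow the proof of Proposition~\ref{prop:interval}: build, for faithful diffuse $\mu,\nu$ on the circle, a degree-one homeomorphism $h$ whose lift is a \emph{shifted} increasing rearrangement. The difficulty compared with the interval is that there is no canonical starting point. We therefore discretise, choose an optimal matching that is non-crossing, and interpolate. Normalise $X=\rr/\zz$ with $\rho(s,t)=\min_{k\in\zz}|s-t+k|$, so that $\mathrm{diam}(X,\rho)=1/2$. Throughout, let $q\colon\rr\to X$ denote the quotient map.

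\emph{Step 1 (discretisation).} Fix $\fp<\infty$ first, and let $\eps>0$. Because $\mu$ and $\nu$ are faithful and diffuse, their cumulative distribution functions (taken from a base point) are homeomorphisms onto $[0,1]$. For large $n$ we can therefore choose cyclically ordered points $\alpha_1,\dots,\alpha_n$ and $\beta_1,\dots,\beta_n$ such that consecutive points of each family cut $X$ into arcs of $\mu$-mass (respectively $\nu$-mass) exactly $1/n$. Since each such arc has length tending to $0$ uniformly as $n\to\infty$, the empirical measures $\mu_n=\frac1n\sum\delta_{\alpha_i}$ and $\nu_n=\frac1n\sum\delta_{\beta_j}$ satisfy $W_\infty(\mu_n,\mu),W_\infty(\nu_n,\nu)<\eps$. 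By Proposition~\ref{prop:wassfacts}, $W_\fp(\mu_n,\nu_n)\le W_\fp(\mu,\nu)+2\eps$. By \eqref{eqn:lp}, some permutation $\sigma$ realises $W_\fp(\mu_n,\nu_n)$.

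\emph{Step 2 (uncrossing).} This is the step we expect to be the main obstacle. Join each $\beta_j$ to $\alpha_{\sigma(j)}$ by a minimising geodesic arc. We claim $\sigma$ can be chosen so that these arcs do not cross and all of them can be lifted to $\rr$ as $[\tilde\beta_j,\tilde\alpha_{\sigma(j)}]$ with $\tilde\beta_1<\dots<\tilde\beta_n<\tilde\beta_1+1$ and $\tilde\alpha_{\sigma(1)}<\dots<\tilde\alpha_{\sigma(n)}<\tilde\alpha_{\sigma(1)}+1$. In that case $\sigma$ is a cyclic shift and the matching is monotone in the lift.

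The argument is the standard exchange argument. Suppose two arcs $\beta_j\to\alpha_{\sigma(j)}$ and $\beta_k\to\alpha_{\sigma(k)}$ cross at a point $c$, or are lifted with opposite order. The triangle inequality through $c$ gives
\[
\rho(\beta_j,\alpha_{\sigma(k)})+\rho(\beta_k,\alpha_{\sigma(j)})\le\rho(\beta_j,\alpha_{\sigma(j)})+\rho(\beta_k,\alpha_{\sigma(k)}).
\]
Convexity of $t\mapsto t^\fp$ then shows that swapping the two targets does not increase $\sum\rho^\fp$. For $\fp>1$ the inequality is strict unless the configuration is degenerate. For $\fp=1$ we instead pick, among optimal $\sigma$, one minimising $\sum\rho^2$ as a tie-breaker.

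Finiteness of $S_n$ ensures the swapping process terminates. One still has to check that a crossing-free family of geodesic arcs on the circle has a consistent orientation, so that the lifted matching is order preserving. We would verify this by a case analysis on pairs of arcs: two arcs of opposite orientation either cross, or one contains both endpoints of the other, and in the latter case a swap strictly helps.

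\emph{Step 3 (interpolation and $\fp=\infty$).} Define $\tilde h$ on $\rr$ to be piecewise linear, sending $\tilde\beta_j$ (and its $\zz$-translates) to $\tilde\alpha_{\sigma(j)}$. By Step~2 it is strictly increasing and commutes with translation by $1$. It therefore descends to a homeomorphism $h$ of $X$, and $h$ is homotopic to the identity via $q\circ\big((1-s)\,\mathrm{id}+s\tilde h\big)$.

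The map $h$ sends each $\nu$-arc of mass $1/n$ onto a $\mu$-arc of mass $1/n$. Hence $W_\infty(h_*\nu,\mu)$ is at most the maximal length of these arcs, which is less than $\eps$, giving \eqref{eqn:transport1}.

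For \eqref{eqn:transport2}, note that $\rho(x,h(x))\le|\tilde x-\tilde h(\tilde x)|$, and on each interval $[\tilde\beta_j,\tilde\beta_{j+1}]$ the function $\tilde x-\tilde h(\tilde x)$ is affine. Its values there therefore lie within $2\eps$ of the endpoint value $\tilde\beta_j-\tilde\alpha_{\sigma(j)}$, whose absolute value is $\rho(\beta_j,\alpha_{\sigma(j)})$. Each interval has $\nu$-mass $1/n$, so
\[
\|\rho(x,h(x))\|_{L^\fp(X,\nu)}\le\Big(\tfrac1n\sum_j\rho(\beta_j,\alpha_{\sigma(j)})^\fp\Big)^{1/\fp}+2\eps=W_\fp(\mu_n,\nu_n)+2\eps\le W_\fp(\mu,\nu)+4\eps.
\]
This shows $k_{(X,\rho,\fp)}\le1$, and the reverse inequality holds automatically.

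For $\fp=\infty$ we run the same argument. Using \eqref{eqn:linfty}, we take $\sigma$ minimising the maximal displacement, and break ties by minimising $\sum\rho^2$ so that the same uncrossing applies. The swap in Step~2 does not increase the maximum of the two distances, so the resulting cyclic-shift matching is still $W_\infty$-optimal, and the interpolation bound in Step~3 holds in sup norm.
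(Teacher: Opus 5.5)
Your architecture (equal-mass discretisation, an optimal matching that is a cyclic shift in the lift, piecewise-linear interpolation) is viable and genuinely different from the paper's, and Steps~1 and~3 are fine. Step~2, however, is wrong as written.

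\textbf{The gap in Step~2.} The triangle inequality at a common point $c$ only bounds the \emph{sum} $\rho(\beta_j,\alpha_{\sigma(k)})+\rho(\beta_k,\alpha_{\sigma(j)})$. Convexity of $t\mapsto t^\fp$ cannot turn a sum inequality into a $\fp$-th power inequality. It penalises spreading two numbers apart, and that is exactly what a swap may do.

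On a one-dimensional space, arcs that meet can perfectly well be correctly matched. In $\rr/\zz$, take $\beta_j=0\mapsto\alpha_{\sigma(j)}=0.2$ and $\beta_k=0.1\mapsto\alpha_{\sigma(k)}=0.3$. The arcs meet (say at $c=0.15$) and your sum inequality holds with equality. Yet swapping gives distances $0.3,0.1$ instead of $0.2,0.2$. This raises $\sum\rho^2$ from $0.08$ to $0.10$ and the maximum from $0.2$ to $0.3$, so your $\fp=\infty$ claim fails too.

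Your proposed case analysis is also false and unnecessary. The arcs $0\mapsto0.1$ and $0.5\mapsto0.4$ are disjoint and of opposite orientation, and neither contains the other's endpoints. This matching is nevertheless already monotone in the lift. Consistent orientation is not what Step~3 needs.

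\textbf{The missing idea.} Compare \emph{signed} displacements in $\rr$; this is the Monge condition \eqref{eqn:monge} that the paper imports from \cite{Delon:2010aa}. Choose lifts $\tilde a_j$ of $\alpha_{\sigma(j)}$ with $|\tilde\beta_j-\tilde a_j|=\rho(\beta_j,\alpha_{\sigma(j)})$, and look at the periodic matching $\tilde\beta_j+m\mapsto\tilde a_j+m$. Suppose it reverses some pair, i.e.\ $x_1<x_2$ are matched to $y_1>y_2$. Then $x_1-y_2$ and $x_2-y_1$ have the same sum as $x_1-y_1$ and $x_2-y_2$ and lie strictly between them.
\begin{itemize}
\item Convexity of $|t|^\fp$ (strict for $\fp>1$), or of $\max|t|$ when $\fp=\infty$, shows that the swap does not increase the cost in the lift.
\item Since $\rho$ is at most the lifted distance, this transfers to the circle.
\item Moreover $\sum\rho^2$ strictly decreases, so your tie-breakers for $\fp\in\{1,\infty\}$ work.
\end{itemize}
Hence a suitably chosen optimal $\sigma$ has an order-preserving, $\zz$-equivariant geodesic lift. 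That is exactly the cyclic shift Step~3 uses.

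\textbf{Comparison with the paper.} With this replacement your proof is a self-contained alternative to the paper's. The paper obtains an exact transport homeomorphism ($h_*\nu=\mu$) for $\fp\in(1,\infty)$ from the continuous theory of shifts in \cite{Delon:2010aa}. It then reaches $\fp\in\{1,\infty\}$ by passing to a limit of cut points. You only get $h_*\nu\approx\mu$, but that is all Definition~\ref{def:transport} requires, and you treat all $\fp$ uniformly by finite combinatorics.
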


\begin{proof}
Identify $X$ with $\rr/\zz$ and let $\mu,\nu\in\prob(X)$ be faithful and diffuse. We claim that, just as for intervals, the optimal transference plan between $\mu$ and $\nu$ is witnessed by a transport homeomorphism $h\colon X\to X$. For finite $\fp$, this follows from \cite{Delon:2010aa}, which studies optimal transport on the circle for cost functions $c$ that, when lifted to $\rr\times\rr\to\rr$, satisfy the Monge condition
\begin{equation} \label{eqn:monge}
c(x_1,y_1) + c(x_2,y_2) < c(x_1,y_2) + c(x_2,y_1)
\end{equation}
whenever $x_1<x_2$ and $y_1<y_2$. As noted in \cite{Delon:2010aa}, for $\fp\in(1,\infty)$ this applies to the cost $c(x,y)=|x-y|^\fp$ appearing in the definition of the $\fp$-Wasserstein distance. The optimal transference plans between $\mu$ and $\nu$ must be `locally optimal' in the sense introduced in \cite{Delon:2010aa}, roughly meaning that the cost of the plan cannot be reduced by local perturbation. The Monge condition forces locally optimal transport plans to be monotone. In fact, by \cite[Theorem 8]{Delon:2010aa}, a locally optimal plan must be a shift, meaning that the circle can be cut (at one location for $\mu$ and a shifted location for $\nu$) so that, just as in Proposition~\ref{prop:interval}, the transport map is the increasing rearrangement homeomorphism associated with the cumulative distribution functions determined by these cuts. The `globally optimal' shifts, that is, those associated with optimally located cuts, are exactly the cost-minimising transport plans (see \cite[Theorem 20]{Delon:2010aa}).  These optimal shifts demonstrate that $k_{(X,\rho,\fp)}=1$ for $\fp\in(1,\infty)$. Note in particular that they have winding number $1$, so are indeed homotopic to the identity map on the circle. The $\fp=1$ problem is addressed in \cite{Cabrelli:1995aa}, but as remarked in \cite{Delon:2010aa}, it can also be deduced as a limiting case. More precisely, if $\fp=1$ or indeed $\fp=\infty$, we let $(\fp_n)_{n=1}^\infty$ be a sequence in $(1,\infty)$ converging to $\fp$ and we decide where to cut the circle by taking an accumulation point of cut points for $\fp_n$. Since $W_\fp(\mu,\nu) = \lim_{n\to\infty}W_{\fp_n}(\mu,\nu)$, it follows that the corresponding shift has the required properties.
\end{proof}

\begin{theorem}[cf.\ {\cite[Proposition 2.5]{Jacelon:2021wa}}] \label{thm:infinitycircle}
Let $X\subseteq\cc$ be a planar circle, that is, a circle equipped with the ambient Euclidean metric $\rho$. Then, $k_{(X,\rho,\infty)}=1$ and $k_{(X,\rho,\fp)} \le \frac{\pi}{2}$ for every $\fp\in[1,\infty)$.
\end{theorem}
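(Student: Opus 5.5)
The plan is to compare the chordal metric $\rho$ on the planar circle $X$ with its intrinsic (arc-length) metric $\rho_{\mathrm{int}}$, and then apply Proposition~\ref{prop:circle}. Normalise $X$ to the unit circle. If $\theta\in[0,\pi]$ is the angular distance between two points, then
\[
\rho=2\sin(\theta/2)\qquad\text{and}\qquad \rho_{\mathrm{int}}=\theta .
\]
By concavity of $\sin$ on $[0,\pi/2]$,
\[
\frac{2}{\pi}\,\rho_{\mathrm{int}}\le\rho\le\rho_{\mathrm{int}} .
\]
Moreover, $\rho$ is a strictly increasing function of $\rho_{\mathrm{int}}$, namely $\rho=g(\rho_{\mathrm{int}})$ with $g(\theta)=2\sin(\theta/2)$.

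\textbf{Finite $\fp$.} Fix faithful, diffuse $\mu,\nu$ and $\eps>0$. Proposition~\ref{prop:circle} gives a homeomorphism $h$, homotopic to the identity, that pushes $\nu$ onto $\mu$ (so \eqref{eqn:transport1} holds trivially) and satisfies
\[
\|\rho_{\mathrm{int}}(x,h(x))\|_{L^\fp(\nu)}\le W_\fp^{\mathrm{int}}(\mu,\nu)+\eps .
\]
The Wasserstein distances on the two sides compare as follows. Since $\rho\le\rho_{\mathrm{int}}$ pointwise,
\[
\|\rho(x,h(x))\|_{L^\fp(\nu)}\le\|\rho_{\mathrm{int}}(x,h(x))\|_{L^\fp(\nu)} .
\]
Integrating $\rho_{\mathrm{int}}\le\frac{\pi}{2}\rho$ against any plan gives
\[
W_\fp^{\mathrm{int}}(\mu,\nu)\le\frac{\pi}{2}\,W_\fp(\mu,\nu).
\]
Chaining these inequalities yields \eqref{eqn:transport2} with $k=\frac{\pi}{2}$.

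\textbf{$\fp=\infty$.} Here the factor $\frac{\pi}{2}$ disappears because $W_\infty$ only sees the essential supremum, and $g$ is monotone. First I would check that
\[
W_\infty^{\rho}(\mu,\nu)=g\bigl(W_\infty^{\mathrm{int}}(\mu,\nu)\bigr).
\]
This holds because, for any plan $\pi$,
\[
\esssup_\pi\rho=g\Bigl(\esssup_\pi\rho_{\mathrm{int}}\Bigr),
\]
since $g$ is continuous and increasing; taking the infimum over plans commutes with $g$. Now take the $\fp=\infty$ shift $h$ from Proposition~\ref{prop:circle}, which satisfies
\[
\operatorname{ess\,sup}_{\nu}\rho_{\mathrm{int}}(x,h(x))\le W_\infty^{\mathrm{int}}(\mu,\nu)+\eps' .
\]
Applying $g$ to both sides gives
\[
\operatorname{ess\,sup}_{\nu}\rho(x,h(x))\le W_\infty^{\rho}(\mu,\nu)+\eps
\]
for suitable $\eps'$, by uniform continuity of $g$. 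Since $\nu$ is faithful and $h$ is continuous, the essential supremum agrees with the actual supremum, though this is not needed. Together with the general lower bound $k\ge1$ noted after Definition~\ref{def:transport}, this gives $k_{(X,\rho,\infty)}=1$.

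\textbf{Main obstacle.} The delicate point is the $\fp=\infty$ case of Proposition~\ref{prop:circle}. That result was obtained as a limit of optimal shifts for $\fp_n\to\infty$, and I need it to give an $h$ with
\[
\operatorname{ess\,sup}_\nu\rho_{\mathrm{int}}(x,h(x))\le W_\infty^{\mathrm{int}}(\mu,\nu)+\eps ,
\]
together with $h_*\nu=\mu$. I would justify this by noting that shifts are increasing rearrangements determined by a pair of cut points, and these vary continuously. The transport cost $\sup_x\rho_{\mathrm{int}}(x,h(x))$ of the limiting shift is then the limit of the $L^{\fp_n}$ costs, which in turn equal $W_{\fp_n}\to W_\infty$ by Proposition~\ref{prop:wassfacts}. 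Everything else is the elementary comparison of $g$ with the identity.
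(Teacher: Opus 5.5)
Your proposal is correct and follows essentially the same route as the paper: both write the chordal metric as a strictly increasing function $\rho=2r\sin\frac{\gamma}{2r}$ of arc length, take the transport homeomorphism from Proposition~\ref{prop:circle}, pass this reparametrisation through $W_\infty$ in the case $\fp=\infty$, and use $\rho\le\gamma\le\frac{\pi}{2}\rho$ for finite $\fp$. The differences are cosmetic. The paper checks $W_{\infty,\rho}=f(W_{\infty,\gamma})$ via the characterisation $\inf\{r \mid \mu(U)\le\nu(U_r)\}$ rather than via essential suprema over plans. It also simply cites Proposition~\ref{prop:circle}, which already covers $\fp=\infty$ with exact optimality, so your ``main obstacle'' needs no further argument.
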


\begin{proof}
The Euclidean metric $\rho$ is a strictly increasing continuous function of the geodesic distance $\gamma$. Indeed, if $r$ denotes the radius of the circle, then for every $x,y\in X$ we have $\gamma(x,y)\in[0,\pi r]$ and $\rho(x,y) = 2r\sin\frac{\gamma(x,y)}{2r}=:f(\gamma(x,y))$. By Proposition~\ref{prop:circle}, given faithful, diffuse measures $\mu,\nu\in\prob(X)$ there is a homeomorphism $h\colon X \to X$ such that $h_*\nu=\mu$ and $\sup_{x\in X}\gamma(x,h(x))=W_{\infty,\gamma}(\mu,\nu)$. Note that
\begin{align} \label{eqn:winftychange}
W_{\infty,\gamma}(\mu,\nu) &= \inf\{t>0 \mid(\forall\, U\subseteq X\text{ Borel}) \:\mu(U) \le \nu(\{x \in X \mid \gamma(x,U)<t\})\} \notag \\
&= \inf\{t>0 \mid(\forall\, U\subseteq X\text{ Borel}) \:\mu(U) \le \nu(\{x \in X \mid \rho(x,U)<f(t)\})\} \notag \\
&= f^{-1}(W_{\infty,\rho}(\mu,\nu)).
\end{align}
We conclude from this that
\[
\sup_{x\in X}\rho(x,h(x)) = f\left(\sup_{x\in X}\gamma(x,h(x))\right) = f(W_{\infty,\gamma}(\mu,\nu)) = W_{\infty,\rho}(\mu,\nu)
\]
and hence that $k_{(X,\rho,\infty)}=1$. Equation \eqref{eqn:winftychange} is peculiar to $\fp=\infty$, so we cannot deduce in the same way that $k_{(X,\rho,\fp)}=1$ for finite $\fp$. The proof of Proposition~\ref{prop:circle} is also not immediately adaptable, because for the cost associated with Euclidean distance, the necessary convexity for establishing the Monge condition \eqref{eqn:monge} only holds on circular arcs no larger than a quarter circle. That said, we can conclude that $k_{(X,\rho,\fp)} \le \frac{\pi}{2}$ because the uniform bound $\rho(x,y) \le \gamma(x,y) \le \frac{\pi}{2}\,\rho(x,y)$ implies that the transport map $h\colon X \to X$ provided by Proposition~\ref{prop:circle} satisfies
\[
\|\rho(x,h(x))\|_{L^{\fp}(X,\nu)} \le \|\gamma(x,h(x))\|_{L^{\fp}(X,\nu)} = W_{\fp,\gamma}(\mu,\nu) \le \tfrac{\pi}{2}\,W_{\fp,\rho}(\mu,\nu). \qedhere
\]
\end{proof}

\begin{theorem} \label{thm:convex}
Let $X$ be a nonempty compact, convex subset of $\rr^n$, for some $n\ge1$, equipped with the ambient Euclidean metric $\rho$. Then, $k_{(X,\rho,\fp)}=1$ for every $\fp\in[1,\infty]$.
\end{theorem}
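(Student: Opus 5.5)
The plan is to approximate an optimal transference plan by a finite ``block permutation'' and then realise that permutation by a homeomorphism. First, \emph{homotopy to the identity is automatic}: since $X$ is convex, any continuous $h\colon X\to X$ is homotopic to $\id_X$ via the straight-line homotopy $(x,t)\mapsto (1-t)x+th(x)$. So it suffices, given faithful diffuse $\mu,\nu\in\prob(X)$ and $\eps>0$, to produce a homeomorphism $h$ with $W_\infty(h_*\nu,\mu)\le\eps$ and $\|\rho(x,h(x))\|_{L^\fp(X,\nu)}\le W_\fp(\mu,\nu)+\eps$. By the remark after Definition~\ref{def:transport} we already know $k\ge1$, so this gives $k_{(X,\rho,\fp)}=1$.

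\emph{Dimension one and the degenerate case.} If $X$ is a point there is nothing to prove. If $\dim X=1$, then $X$ is a Euclidean interval and Proposition~\ref{prop:interval} applies. From now on assume $X$ has nonempty interior in its affine hull, which after relabelling we take to be $\rr^n$ with $n\ge2$.

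\emph{Discretisation.} Fix $\delta>0$ and partition $X$ into finitely many Borel cells $C_1,\dots,C_m$, each of diameter $<\delta$ and each with nonempty interior. Set $s_i=\nu(C_i)$ and $t_j=\mu(C_j)$, both positive by faithfulness. Choose centres $c_i\in C_i$. Take an optimal plan $(a_{ij})$ for the discrete problem between $\sum_i s_i\delta_{c_i}$ and $\sum_j t_j\delta_{c_j}$ (Example~\ref{ex:finite}). Moving mass by at most $\delta$ changes $W_\fp$ by at most $\delta$, so $\bigl(\sum a_{ij}\rho(c_i,c_j)^\fp\bigr)^{1/\fp}\le W_\fp(\mu,\nu)+2\delta$.

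\emph{Realising the block permutation.} Fix a small $\eta>0$. Inside the interior of each $C_i$, choose pairwise disjoint closed Euclidean balls $B_{ij}$, $j=1,\dots,m$ (omitting those with $a_{ij}=0$), such that $\nu(B_{ij})=(1-\eta)a_{ij}$. This is possible because $\nu$ is diffuse and faithful, so $r\mapsto\nu(B(x,r))$ is continuous and strictly increasing. Similarly choose disjoint closed balls $D_{ij}$ in the interior of $C_j$ with $\mu(D_{ij})=\nu(B_{ij})$.

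Because $n\ge2$, the interior of $X$ minus finitely many small balls is connected. Hence there is an ambient isotopy of $X$, fixing a neighbourhood of $\partial X$, that carries each $B_{ij}$ affinely onto a ball containing $D_{ij}$, moving the balls along pairwise disjoint tubes. Composing with a radial homeomorphism, we may arrange that the ball lands exactly on $D_{ij}$. On each ball we then apply an Oxtoby--Ulam type homeomorphism of the closed ball, fixing the boundary, that carries the pushed-forward restriction of $\nu$ to $\mu|_{D_{ij}}$. Both measures are faithful and diffuse on the ball with equal total mass, and the boundary sphere has measure zero after a generic choice of radii. Call the resulting homeomorphism $h$.

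\emph{Estimates.} On $B_{ij}$ we have $\rho(x,h(x))\le\rho(c_i,c_j)+2\delta$. The remaining set has $\nu$-mass $\eta$, and there displacements are at most $\mathrm{diam}\,X$. Hence
\[
\|\rho(x,h(x))\|_{L^\fp(\nu)}\le W_\fp(\mu,\nu)+4\delta+\eta^{1/\fp}\,\mathrm{diam}\,X .
\]
For the other condition, $h_*\nu$ and $\mu$ agree on each $D_{ij}$. They differ only by mass $\eta$ outside the balls, and $h_*\nu$ gives each cell mass within $\eta$ of $t_j$. Adjusting the construction so that the leftover mass is also sent into the correct cells, for instance by one further Oxtoby--Ulam correction on the complements of the balls within each cell, gives $h_*\nu(C_j)=\mu(C_j)$ for all $j$. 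Matching cell masses with cells of diameter $<\delta$ then gives $W_\infty(h_*\nu,\mu)\le\delta$. Taking $\delta,\eta$ small finishes the proof; for $\fp=\infty$ the same construction gives $\sup\rho(x,h(x))\le W_\infty+4\delta$ by using an optimal $W_\infty$ plan instead.

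\emph{Main obstacle.} The hard step is the realisation step. One must check that the isotopy together with the Oxtoby--Ulam homeomorphisms can be performed so that $h$ is a genuine homeomorphism of all of $X$. This includes the requirement that the complement of the balls is mapped onto the complement of the target balls with the right cell masses, so that the $W_\infty$ condition holds exactly rather than only up to $\eta$. I would handle the complement by a relative Oxtoby--Ulam theorem for manifolds with boundary. If that proves awkward, I would fall back on weakening to $W_\infty(h_*\nu,\mu)\le\eps$ by letting $\eta\to0$.
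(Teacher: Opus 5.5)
For finite $\fp$ your plan is essentially the paper's. The paper uses uniform atomic approximants and an optimal permutation where you use cells and a block plan $(a_{ij})$. In both, a near-optimal discrete plan is realised by a homeomorphism that carries small pieces of mass from source to target, and only the endpoints matter because the region swept out has small $\nu$-measure.

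\textbf{The gap is at $\fp=\infty$.} Your final claim that ``the same construction gives $\sup\rho(x,h(x))\le W_\infty+4\delta$'' does not hold. Your estimate controls displacement only on the balls $B_{ij}$. Off them you have only the bound $\mathrm{diam}\,X$, which was harmless solely because of the factor $\eta^{1/\fp}$, and that factor is $1$ when $\fp=\infty$. Since $\nu$ is faithful and $x\mapsto\rho(x,h(x))$ is continuous, the $L^\infty(X,\nu)$-norm is a supremum over all of $X$. So every point dragged along by your isotopy counts, as does every point moved by your complementary correction; the tubes have unspecified shape and may detour around other balls.

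Controlling this is exactly what \cite[Theorem 2.13]{Jacelon:2021wa} does, and the paper cites it for $\fp=\infty$. There the paths must be straight segments in disjoint tubes, so that \emph{every} point moves by about $W_\infty(\mu,\nu)$. In the plane, crossing segments can be neither jumped over nor bypassed without lengthening them, hence the delicate procedure of \cite[Proposition 2.9]{Jacelon:2021wa}. You need to cite that result or supply such an argument.

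\textbf{A repairable flaw for finite $\fp$.} The choice of the $B_{ij}$ is not justified as written. Diffuseness does not make spheres $\nu$-null, so $r\mapsto\nu(B(x,r))$ can jump. Worse, one ball per pair $(i,j)$ cannot in general carry a prescribed large share of $\nu(C_i)$: if $\nu|_{C_i}$ sits mostly near the corners of a square cell, no ball inside $C_i$ carries half of it. Instead:
\begin{enumerate}
\item take cell walls that are $\mu$- and $\nu$-null;
\item exhaust all but $\eta$ of $\nu(C_i)$ by many small disjoint balls (Vitali);
\item assign these balls to the targets $j$ so that the mass sent from $C_i$ to $C_j$ is at most $a_{ij}$ and the total shortfall is small;
\item push each ball separately into the interior of $C_j$, which is possible since $n\ge2$.
\end{enumerate}
The $L^\fp$ estimate is unchanged, and Oxtoby--Ulam becomes unnecessary.

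\textbf{The step you call the main obstacle is easy.} No exact cell masses are needed, but some argument is, because $W_\infty$ is not in general stable under small-mass perturbations. Put $m:=\min_{w\in X}\mu(B(w,\eps/2))$, which is positive because $\mu$ is faithful. Suppose the ball part of $h_*\nu$ is coupled cellwise to part of $\mu$ at distance $<\delta$, and the leftover mass is at most $m$. Take a closed $U$ with $U_{\delta+\eps}\ne X$. Convexity gives $w$ with $\rho(w,U)=\delta+\eps/2$, so $\mu(U_{\delta+\eps})\ge\mu(U_\delta)+m\ge h_*\nu(U)$, hence $W_\infty(h_*\nu,\mu)\le\delta+\eps$.

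\textbf{A caveat shared with the paper.} For $n\ge2$, both your argument and the paper's sketch tacitly need $\mu(\partial X)=\nu(\partial X)=0$, because homeomorphisms of $X$ preserve $\partial X$. Without it, \eqref{eqn:transport1} can fail for every homeomorphism. On the closed unit disc, take $\mu$ to be normalised area and $\nu$ the average of normalised area and normalised arc length on the circle. Every homeomorphism $h$ keeps $h_*\nu$-mass $\tfrac12$ on the circle, which forces $W_\infty(h_*\nu,\mu)\ge 1-1/\sqrt{2}$.
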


\begin{proof}
The $n=1$ case of the theorem is Proposition~\ref{prop:interval}, and the $\fp=\infty$ case is \cite[Theorem 2.13]{Jacelon:2021wa}. Let us sketch the proof of this latter result and indicate how it should be modified for other $\fp$. By reducing $n$ if necessary, we may assume that the interior of $X$ is nonempty. Given faithful measures $\mu,\nu\in\prob(X)$, we can find finitely supported measures $\mu'=\frac{1}{n}\sum_{i=1}^n\delta_{x_i}$ and $\nu'=\frac{1}{n}\sum_{i=1}^n\delta_{y_i}$ such that $W_\infty(\mu,\mu')$ and $W_\infty(\nu,\nu')$ are small (see \cite[Lemma 2.3]{Jacelon:2021wa}) and with $(x_1,\dots,x_n)$ and $(y_1,\dots,y_n)$ located in the interior of $X$ and ordered so that $W_\infty(\mu',\nu')=\max_{1\le i\le n}\rho(x_i,y_i)$, or so that $W_\fp(\mu',\nu')^\fp = \frac{1}{n} \sum_{i=1}^n \rho(x_i,y_i)^\fp$ if $\fp<\infty$. We join $x_i$ to $y_i$ by shortest paths (that is, straight lines) $\gamma_i$, which we then perturb to be pairwise disjoint. If $n\ge3$, this perturbation is done by jumping one path over another at any intersection point. We lack the dimensions for this move if $n=2$, but a more delicate procedure can be carried out as in \cite[Proposition 2.9]{Jacelon:2021wa}. We encase these paths in disjoint tubes, within which we find homeomorphisms $h_i$ interchanging $x_i$ and $y_i$ and fixing all points on the boundaries of the tubes. The global transport map $h$ is defined to be the union of the $h_i$ within the tubes and the identity on the complement. As explained in \cite[Theorem 6.4]{Jacelon:2025ab}, the region in which $h$ is not approximately $1$-Lipschitz can be arranged to have very small $\nu$-measure. This guarantees that $W_\infty(h_*\nu,\mu') = W_\infty(h_*\nu,h_*\nu')$ is small, hence so is $W_\infty(h_*\nu,\mu)$, giving us \eqref{eqn:transport1}. The significance of the $\gamma_i$ being shortest paths is to ensure that $\|\rho(h(x),x)\|$ is close to $W_\infty(\mu,\nu)$, but this is not relevant for $\fp<\infty$. Indeed, if $\fp<\infty$, then
\begin{equation} \label{eqn:wpotimal}
\|\rho(h(x),x)\|^\fp_{L^{\fp}(\nu)} = \frac{1}{n} \sum_{i=1}^n\rho(h(y_i),y_i)^\fp = \frac{1}{n} \sum_{i=1}^n\rho(x_i,y_i)^\fp = W_\fp(\mu',\nu')^\fp \approx W_\fp(\mu,\nu)^\fp
\end{equation}
regardless of the geometry of the paths. This gives us \eqref{eqn:transport2} with constant $k=1$.
\end{proof}

\begin{theorem} \label{thm:manifolds}
Let $X$ be a compact, connected Riemannian manifold with intrinsic metric $\rho$. Then, $k_{(X,\rho,\fp)}=1$ for every $\fp\in[1,\infty)$, and $k_{(X,\rho,\infty)}=1$ if $\dim X \ne 2$.
\end{theorem}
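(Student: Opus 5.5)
The plan is to reuse the tube-and-paths construction from the proof of Theorem~\ref{thm:convex}, replacing straight segments by curves in the manifold. The $\fp=\infty$ case with $\dim X\ne 2$ is presumably already in the literature \cite{Jacelon:2021wa,Jacelon:2021vc}. There, shortest paths matter and can be made disjoint by perturbation when $\dim X\ge3$; the case $\dim X=1$ is the circle, handled by Proposition~\ref{prop:circle}. So I will cite that and concentrate on $\fp<\infty$.

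Fix faithful diffuse $\mu,\nu\in\prob(X)$ and $\eps>0$. First I approximate them by uniform atomic measures $\mu'=\frac1n\sum\delta_{x_i}$ and $\nu'=\frac1n\sum\delta_{y_i}$ with distinct points, chosen so that $W_\infty(\mu,\mu')$ and $W_\infty(\nu,\nu')$ are small. The analogue of \cite[Lemma 2.3]{Jacelon:2021wa} works on any compact metric space: partition $X$ into small Borel pieces and rationally approximate their masses. By Example~\ref{ex:finite}, I then relabel so that $W_\fp(\mu',\nu')^\fp=\frac1n\sum\rho(x_i,y_i)^\fp$. By Proposition~\ref{prop:wassfacts}, this is within a small error of $W_\fp(\mu,\nu)^\fp$.

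Next I join $x_i$ to $y_i$ by paths $\gamma_i$ that are pairwise disjoint and avoid all other points $x_j,y_j$. For $\dim X\ge 3$, I use general position applied to minimal geodesics. For $\dim X=2$, I use the delicate planar procedure of \cite[Proposition 2.9]{Jacelon:2021wa}, carried out in coordinate charts. Since $X$ is connected, for $\dim X\ge2$ such disjoint arcs always exist after removing finitely many points. As in Theorem~\ref{thm:convex}, the lengths of the paths are irrelevant for finite $\fp$. I thicken the paths to disjoint tubes (closed balls), and inside each tube I take a homeomorphism $h_i$ that is isotopic to the identity rel boundary, carries $y_i$ to $x_i$, and fixes the boundary. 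I set $h$ equal to $h_i$ on the tubes and to the identity elsewhere. Then $h$ is homotopic, indeed isotopic, to $\id_X$.

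The main work is the estimate, which follows \cite[Theorem 6.4]{Jacelon:2025ab}. Write $\nu=\sum_i\nu_i$, where $\nu_i$ is the part of $\nu$ within $W_\infty$-distance $\delta$ of $y_i$. I want $h$ to map a tiny ball $B(y_i,\delta)$ into a tiny ball around $x_i$, and the region where $h$ distorts distances badly to have $\nu$-measure below $\eps$. This can be arranged by making the tubes thin and choosing $h_i$ to translate a small ball along $\gamma_i$. Then $W_\infty(h_*\nu,\mu')$ is small, giving \eqref{eqn:transport1}. Also $\int\rho(x,h(x))^\fp\,d\nu\approx\frac1n\sum\rho(y_i,x_i)^\fp+\mathrm{diam}(X)^\fp\eps$, giving \eqref{eqn:transport2} with $k=1$.

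The main obstacle is ensuring that the exceptional region carries small $\nu$-mass: the tubes must avoid most of the $\nu$-mass, even though $\nu$ is faithful. I would handle this by first shrinking the tube widths, since $\nu$ is diffuse and the tubes have small measure as their widths tend to $0$. I would also fix the balls around the $y_i$ before choosing the tubes. The dimension $2$ restriction only affects $\fp=\infty$, where the paths must also be near-geodesic.
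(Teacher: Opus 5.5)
Your outline is the paper's: discretise, match optimally via Example~\ref{ex:finite}, join $y_i$ to $x_i$ by disjoint arcs, thicken to tubes, and note that arc length is irrelevant for finite $\fp$. Your remark that in dimension $2$ any disjoint arcs will do is exactly the paper's point. So the appeal to \cite[Proposition 2.9]{Jacelon:2021wa} in charts is unnecessary, and it is not justified as stated, since minimal geodesics need not lie in a single chart.

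The gap is in the transport mechanism itself. You take $h$ to be the identity off the tubes and insist that the tubes carry little $\nu$-mass. But then $|h_*\nu(U)-\nu(U)|\le\nu(\text{tubes})$ for every Borel $U$, so $h_*\nu$ is close to $\nu$. Hence \eqref{eqn:transport1} fails unless $\nu$ was already close to $\mu$. Your displacement integral would then be nearly $0$, not nearly $\frac1n\sum_i\rho(x_i,y_i)^\fp$; by the argument after Definition~\ref{def:transport} showing $k\ge1$, that is incompatible with \eqref{eqn:transport1} unless $W_\fp(\mu,\nu)$ is small.

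Your stated remedy cannot be implemented either. The balls $B(y_i,\delta)$ containing the supports of the $\nu_i$ cover $X=\supp\nu$. Since $X$ is connected, some of these balls meet. So no injective $h$ sends them into disjoint tiny balls around the $x_i$, and they cannot fit inside disjoint tubes of small measure.

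What is missing is a concentration step before the tube moves. For example, take a fine triangulation whose cell boundaries are $\nu$-null, with $n$ much larger than the number of cells and the number of $y_i$ in each cell roughly proportional to its mass. First apply a homeomorphism $g$, isotopic to $\id_X$ and moving points by less than the mesh, that squeezes all but a small fraction of the $\nu$-mass of each cell into tiny disjoint balls $B_i\ni y_i$ with $g_*\nu(B_i)\approx\frac1n$. Only then is your picture correct. A tube map $s$, supported in thin tubes carrying little $g_*\nu$-mass outside the $B_i$, moves each $B_i$ to a tiny ball around $x_i$. The composite $h=s\circ g$ then satisfies \eqref{eqn:transport2} with $k=1$, since the displacement of $g$ is absorbed into $\eps$. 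In the paper's formulation, each tube swallows most of the cell of $y_i$ and contracts it in transit. What must have small measure is the set where $h$ fails to be approximately $1$-Lipschitz, not the tubes.

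Two smaller points. First, diffuseness does not give $\nu(\text{tube})\to0$ as the widths shrink, because a diffuse measure can charge a curve. Instead, choose each core arc from a continuum of arcs that are disjoint away from their endpoints; all but countably many of these are null. Second, the leftover mass need not land near the right $x_i$, so $W_\infty(h_*\nu,\mu')$ need not be small. Estimate $W_1(h_*\nu,\mu)$ instead, and upgrade to \eqref{eqn:transport1} using faithfulness of $h_*\nu$ and $\mu$, as in the remark following Definition~\ref{def:transport}.
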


\begin{proof}
If $\dim X=1$, then $X$ is a circle and the theorem is covered by Proposition~\ref{prop:circle}. In dimension at least three, the $\fp=\infty$ case of the theorem is \cite[Theorem 2.8]{Jacelon:2021vc} (see also \cite[Theorem 6.4]{Jacelon:2025ab}). The proof is the same as that of Theorem~\ref{thm:convex}, with straight lines replaced by length-minimising geodesics. For finite $\fp$, the argument also works in dimension two, even without the careful analysis of \cite[Proposition 2.9]{Jacelon:2021wa}. Intersecting geodesics can be disentangled one at a time by going \emph{around} rather than over, with the detours confined to regions of small measure. Unlike the situation for $\fp=\infty$, it does not matter that this may result in longer paths. We still obtain \eqref{eqn:wpotimal} and deduce that $k_{(X,\rho,\fp)}=1$.
\end{proof}

\begin{theorem} \label{thm:transport}
Let $(X,\rho)$ be a nonempty compact, connected metric space such that $K^*(X)$ is finitely generated and torsion free, and let $\fp\in[1,\infty]$. Let $A$ be a simple, separable, unital, nuclear, $\js$-stable $\cs$-algebra with $T(A)\ne\emptyset$. Suppose either that $K^1(X)=0$ and $A$ has a unique trace, or that $A$ has real rank zero and finitely many extremal traces. Then,
\[
d^\cc_{U,\fp}(\varphi,\psi) \le k_{(X,\rho,\fp)} \cdot W_\fp(\varphi,\psi)
\]
for every pair of unital $^*$-monomorphisms $\varphi,\psi\colon C(X)\to A$ with $K_*(\varphi)=K_*(\psi)$.
\end{theorem}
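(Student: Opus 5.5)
The plan is to transport $\psi$ onto $\varphi$ by a homeomorphism of $X$ supplied by Definition~\ref{def:transport}, and then use classification to turn that homeomorphism into a unitary.

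\textbf{Setup and the monotracial case.} For $\tau\in T(A)$ put $\mu_\tau=\tau\circ\varphi$ and $\nu_\tau=\tau\circ\psi$.

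First check that these measures are admissible in Definition~\ref{def:transport}.
\begin{itemize}
\item They are \emph{faithful}: $A$ is simple, so every trace is faithful, and $\varphi,\psi$ are injective.
\item They are \emph{diffuse}: for an extremal $\tau$, $\pi_\tau(A)''$ is a $\mathrm{II}_1$ factor, because $A$ is simple, infinite-dimensional and $\js$-stable. An atom of $\mu_\tau$ at $x_0$ would then have to be the trace of the spectral projection $\chi_{\{x_0\}}$ of $\pi_\tau\circ\varphi$. If needed, one can instead approximate $\mu,\nu$ in $W_\infty$ by faithful diffuse measures and use Lemma~\ref{lemma:continuity}-type continuity.
\end{itemize}

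Now suppose $A$ has a unique trace $\tau$ and $K^1(X)=0$; write $\mu=\mu_\tau$, $\nu=\nu_\tau$. Fix $\eps>0$ and take $h$ as in Definition~\ref{def:transport}, homotopic to $\id_X$, with $W_\infty(h_*\nu,\mu)\le\eps$ and $\|\rho(x,h(x))\|_{L^\fp(\nu)}\le k\,W_\fp(\mu,\nu)+\eps$. Set $\psi_h:=\psi\circ h^*$, where $h^*f=f\circ h$.

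Since $h\simeq\id_X$, we get $K_*(\psi_h)=K_*(\psi)=K_*(\varphi)$. Also $\tau\circ\psi_h$ corresponds to $h_*\nu$, which is $W_\infty$-close to $\mu$.

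For $f\in\mathrm{Lip}^\cc_1(X,\rho)$ and any unitary $u$,
\[
\|\varphi(f)-u\psi(f)u^*\|_\fp\le\|\varphi(f)-u\psi_h(f)u^*\|_\fp+\|\psi(f\circ h-f)\|_\fp .
\]
The last term equals $\bigl(\int|f(h(x))-f(x)|^\fp\,d\nu\bigr)^{1/\fp}\le\|\rho(x,h(x))\|_{L^\fp(\nu)}\le kW_\fp(\mu,\nu)+\eps$, uniformly in $f$. So it remains to make
\[
\inf_u\sup_{f}\|\varphi(f)-u\psi_h(f)u^*\|_\fp\le\inf_u\sup_{f}\|\varphi(f)-u\psi_h(f)u^*\|
\]
small, where by \eqref{eqn:aa} the supremum runs over the compact set $\mathrm{Lip}^\cc_1\cap B_{\mathrm{diam}}$.

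\textbf{Main obstacle: a quantitative form of classification.} We need: for every $\delta>0$ there is $\eta>0$ such that $\psi'\in\mathrm{Emb}^1(C(X),A)_\varphi$ with $W_\infty(\tau\circ\psi',\tau\circ\varphi)<\eta$ satisfies $d^\cc_{U,\infty}(\varphi,\psi')<\delta$. This is a continuity statement for the inverse of the tracial parametrisation in Theorem~\ref{thm:cstarmetric}.

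I would argue by contradiction, using the \emph{sequence-algebra} version of the uniqueness theorem in \cite{Carrion:wz}. Take $\psi_n$ with traces converging to those of $\varphi$ but $d_U(\varphi,\psi_n)\ge\delta$. Then $(\psi_n)_n$ induces a map $C(X)\to A_\infty$ (or $A_\omega$) that agrees with $\varphi$ in $K$-theory and on traces. As in the proof of Theorem~\ref{thm:cstarmetric}, the $\overline{K}_1^{\mathrm{alg}}$ term is controlled by $K^1(X)=0$, respectively by real rank zero. Uniqueness into the sequence algebra then gives unitaries conjugating $\psi_n$ to $\varphi$ along a subsequence, which is a contradiction. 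Applying this with $\psi'=\psi_h$ and letting $\eps\to0$ gives $d^\cc_{U,\fp}(\varphi,\psi)\le k\,W_\fp(\varphi,\psi)$.

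\textbf{Real rank zero with extremal traces $\tau_1,\dots,\tau_m$.} Here a single $h$ must serve all pairs $(\mu_{\tau_j},\nu_{\tau_j})$ at once, and this is not available directly. Instead, I would use real rank zero, together with density of $\rho_A(K_0(A))$ in $\mathrm{Aff}(T(A))$, to find projections $p_1,\dots,p_m$ summing to $1$ with $\tau_i(p_j)\approx\delta_{ij}$.

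Since $C(X)$ is semiprojective-like only in special cases, I would rather work in $\pi_{\tau_1}(A)''\oplus\dots\oplus\pi_{\tau_m}(A)''$ heuristically. Concretely, I would build $h_j$ for each $j$ and approximately decompose $\varphi$ and $\psi$, up to approximate unitary equivalence given by classification, as direct sums of embeddings into the corners $p_jAp_j$, each of which is almost monotracial. I would then apply the monotracial argument in each corner, with real rank zero handling $K_1$, and assemble the unitaries.

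The error on the small-trace parts is controlled by $\tau_i(p_j)\approx0$ for $i\ne j$ and boundedness of $f$ by $\mathrm{diam}(X)$. Taking the supremum over extremal traces, which suffices by convexity of $\tau\mapsto\tau(|a|^\fp)$, gives the bound with $W_\fp(\varphi,\psi)=\max_j W_\fp(\mu_{\tau_j},\nu_{\tau_j})$. I expect this corner decomposition, together with the quantitative classification step above, to be the most delicate part.
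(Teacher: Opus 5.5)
\textbf{Unique-trace case.} Here your argument is essentially the paper's with $m=1$. You extract a local (quantitative) form of uniqueness from the sequence-algebra classification and apply it to $\varphi$ versus $\psi\circ h^*$. You then add the estimate $\|\psi(f\circ h-f)\|_\fp\le\|\rho(x,h(x))\|_{L^\fp(X,\nu)}$.

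One correction there. Being a $\mathrm{II}_1$ factor does not exclude atoms, since spectral projections of $\pi_\tau(\varphi(f))$ can have positive trace. For example, when $A$ has real rank zero, an injective $\varphi$ may have a summand $f\mapsto f(x_0)p$. So your fallback is genuinely needed, and it is not a continuity lemma. It requires the existence half of classification to produce embeddings with the same $K_*$ whose measures are faithful, diffuse and $W_\infty$-close to $\mu,\nu$. Your quantitative uniqueness then shows these embeddings are $d^\cc_{U,\infty}$-close to $\varphi,\psi$. This is the paper's perturbation step (i).

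\textbf{The gap: $m\ge 2$ extremal traces.} The corners $p_jAp_j$ are not ``almost monotracial''. Since $A$ is simple, $T(p_jAp_j)$ is again a simplex whose $m$ extreme points are $\tau_i(\cdot)/\tau_i(p_j)$. Uniqueness in $p_jAp_j$ therefore needs $\varphi_j$ and $\psi_j\circ h_j^*$ to approximately agree at \emph{every} one of these normalised traces. For $i\ne j$, nothing in your construction controls $\tau_i\circ\varphi_j/\tau_i(p_j)$ against $\tau_i\circ\psi_j\circ h_j^*/\tau_i(p_j)$; the smallness of $\tau_i(p_j)$ disappears after normalising. You would also have to match $K_*$ of the corner pieces of $\varphi$ and $\psi$ corner by corner. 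So ``apply the monotracial argument in each corner and assemble the unitaries'' does not go through as described.

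The missing idea is to split only $\psi\approx\sum_j\psi_j$, with $\psi_j\colon C(X)\to p_jAp_j$, and to compare globally in $A$. Set $\varphi':=\sum_j\psi_j\circ h_j^*$.
\begin{itemize}
\item $K_*(\varphi')=K_*(\psi)=K_*(\varphi)$, because each $h_j\simeq\id_X$.
\item For every $i$, the measure of $\tau_i\circ\varphi'$ is close to $(h_i)_*\nu_i$, hence to $\mu_i$, because $\tau_i(p_j)\approx\delta_{ij}$.
\end{itemize}
A single application of your quantitative uniqueness in $A$ then makes $d^\cc_{U,\infty}(\varphi,\varphi')$ small, and there are no unitaries to assemble. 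For finite $\fp$,
\[
\|\varphi'(f)-\psi(f)\|_\fp^\fp=\max_i\sum_j\tau_i\circ\psi_j\bigl(|f\circ h_j-f|^\fp\bigr).
\]
The off-diagonal terms are at most $\tau_i(p_j)\,(2\,\mathrm{diam}(X,\rho))^\fp$. This is why the projections must be chosen with $|\tau_i(p_j)-\delta_{ij}|<(2\,\mathrm{diam}(X,\rho))^{-\fp}\eps$.
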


\begin{proof}
The $\fp=\infty$ case of the theorem is proved in \cite[Theorem 4.11]{Jacelon:2021wa}. Let us briefly summarise the argument and adapt it to the case of finite $\fp$.

Let $\{\tau_1,\dots,\tau_m\}$ be the extremal traces of $A$, and let $\eps>0$.  As in Theorem~\ref{thm:cstarmetric}, the assumptions on $X$ and $A$ ensure that the embeddings $\varphi,\psi \colon C(X)\to A$ are approximately unitarily equivalent if and only if $T(\varphi)=T(\psi)$. Appealing to the classification of full embeddings into sequence algebras \cite[Theorem 1.1]{Carrion:wz}, we in fact get a local version of this statement: \emph{approximate} agreement on traces is sufficient to guarantee unitary conjugacy up to $\eps$ on the compact set $\mathrm{Lip}^\cc_1(X,\rho)\cap B_{\mathrm{diam}(X,\rho)}(C(X))$ (retaining the notation used in \eqref{eqn:aa}). Given this, the first steps of the argument are perturbation and diagonalisation. These are performed in \cite{Jacelon:2021wa} under the assumption that $K_*(C(X))=K_*(\cc)$, but it is explained in \cite[Theorem 3.1]{Jacelon:2021vc} how to modify the procedure to allow for $K_*(C(X))$ to be finitely generated and torsion free. More precisely, we may assume via classification that:
\begin{enumerate}[(i)]
\item \label{it:perturb1} for each $i$, $\mu_i:=\mu_{\tau_i\circ\varphi}$ and $\nu_i:=\mu_{\tau_i\circ\psi}$ are (faithful and) diffuse measures on $X$;
\item \label{it:perturb2} there are orthogonal projections $p_1,\dots,p_m \in A$ with $p_1+\dots+p_m=1$, and unital $^*$-homomorphisms $\psi_i\colon C(X) \to p_iAp_i$, such that $\psi=\psi_1+\dots+\psi_m$ and $|\tau_i(p_j)-\delta_{ij}| < \min\{\eps,(2\mathrm{diam}(X,\rho))^{-\fp}\eps\}$  for every $i$ and $j$; in particular,  $|\tau_i\circ\psi(g)-\tau_i\circ\psi_i(g)|<\eps$ for every $i$ and every $g\in C(X)$ with $\|g\|\le (2\mathrm{diam}(X,\rho))^{\fp}$.
\end{enumerate}
We may assume that $k_{(X,\rho,\fp)}<\infty$. By definition (Definition~\ref{def:transport}), this implies that for each $i$, we can find a homeomorphism $h_i\colon X\to X$ such that $(h_i)_*\nu_i$ is close to $\mu_i$ and
\begin{equation} \label{eqn:transport2i}
\|\rho(x,h_i(x))\|_{L^{\fp}(X,\nu_i)}^\fp \le k_{(X,\rho,\fp)}^\fp \cdot W_\fp(\mu_i,\nu_i)^\fp + \eps.
\end{equation}
The map $\varphi':=\sum_{i=1}^m\psi_i\circ h_i^* \colon C(X) \to A$ is a unital embedding that agrees with $\varphi$ on $K$-theory (because the $h_i$ are homotopic to the identity map) and approximately agrees on traces. Classification \cite[Theorem 1.1]{Carrion:wz} allows us to conclude that $d^\cc_{U,\infty}(\varphi,\varphi')$ is small enough so that
\begin{equation} \label{eqn:bound3}
|d^\cc_{U,\fp}(\varphi,\psi)^\fp - d^\cc_{U,\fp}(\varphi',\psi)^\fp| < \eps.
\end{equation}
By convexity of $W_\fp^\fp$ (Proposition~\ref{prop:wassfacts}), we have
\begin{equation} \label{eqn:bound2}
W_\fp(\varphi,\psi)^\fp = \sup_{\tau\in T(A)} W_\fp(\mu_{\tau\circ\varphi},\mu_{\tau\circ\psi})^\fp = \max_{1\le i\le m} W_\fp(\mu_i,\nu_i)^\fp.
\end{equation}
Similarly, we have for every $a\in A$ that
\[
\|a\|_\fp^\fp = \sup_{\tau\in\partial_e(T(A))} \tau(|a|^\fp) = \max_{1\le i\le m}\tau_i(|a|^\fp).
\]
Combined with \eqref{it:perturb2}, this implies that for every $f\in B_{\mathrm{diam}(X,\rho)}(C(X))$, we have
\begin{align} \label{eqn:bound1}
\|\varphi'(f)-\psi(f)\|_\fp^\fp &= \left\|\sum_{j=1}^m\psi_j(f\circ h_j)-\psi_j(f)\right\|_\fp^\fp \notag \\
&= \max_{1\le i\le m}\sum_{j=1}^m\tau_i\circ\psi_j(|f\circ h_j-f|^\fp) \notag \\
&\le \max_{1\le i\le m}\tau_i\circ\psi_i(|f\circ h_i-f|^\fp) + \eps \notag \\
&\le \max_{1\le i\le m}\tau_i\circ\psi(|f\circ h_i-f|^\fp) + 2\eps \notag \\
&= \max_{1\le i\le m} \int_X |f(h_i(x))-f(x)|^\fp \, d\nu_i(x) + 2\eps.
\end{align}
It follows that
\begin{align*}
d^\cc_{U,\fp}(\varphi,\psi)^\fp \:&\overset{\mathclap{\scriptstyle\eqref{eqn:bound3}}}{<}\: d^\cc_{U,\fp}(\varphi',\psi)^\fp + \eps\\
\:&\overset{\mathclap{\scriptstyle\eqref{eqn:aa}}}{=}\: \inf_{u\in U(A)} \sup_{f\in\mathrm{Lip}^\cc_1(X,\rho)\cap B_{\mathrm{diam}(X,\rho)}} \|\varphi'(f)-u\psi(f)u^*\|_\fp^\fp + \eps\\
&\le\: \sup_{f\in\mathrm{Lip}^\cc_1(X,\rho)\cap B_{\mathrm{diam}(X,\rho)}} \|\varphi'(f)-\psi(f)\|_\fp^\fp + \eps\\
\:&\overset{\mathclap{\scriptstyle\eqref{eqn:bound1}}}{\le}\: \sup_{f\in\mathrm{Lip}^\cc_1(X,\rho)\cap B_{\mathrm{diam}(X,\rho)}} \max_{1\le i\le m} \int_X |f(h_i(x))-f(x)|^\fp \, d\nu_i(x) + 3\eps\\
&\overset{\mathclap{\scriptstyle\eqref{eqn:lipf}}}{\le}\: \max_{1\le i\le m} \int_X \rho(x,h_i(x))^\fp \, d\nu_i(x) + 3\eps\\
\:&\overset{\mathclap{\scriptstyle\eqref{eqn:transport2i}}}{\le}\: \max_{1\le i\le m} k_{(X,\rho,\fp)}^\fp \cdot W_\fp(\mu_i,\nu_i)^\fp + 4\eps\\
\:&\overset{\mathclap{\scriptstyle\eqref{eqn:bound2}}}{=}\: k_{(X,\rho,\fp)}^\fp \cdot W_\fp(\varphi,\psi)^\fp + 4\eps.
\end{align*}
Since $\eps$ was chosen arbitrarily, this implies that $d^\cc_{U,\fp}(\varphi,\psi) \le k_{(X,\rho,\fp)} \cdot W_\fp(\varphi,\psi)$.
\end{proof}

\begin{remark} \label{rem:bauer}
It is shown in \cite[Theorem 3.1]{Jacelon:2021vc} that for real rank zero $A$ and $\fp=\infty$, Theorem~\ref{thm:transport} (hence also Corollary~\ref{cor:transport}) continues to hold if we relax finiteness of $\partial_e(T(A))$ to $\partial_e(T(A))$ being compact and of finite Lebesgue covering dimension. The idea is to diagonalise relative to orthogonal projections $p_1,\dots,p_m$ that are tracially close to a suitable partition of unity $g_1,\dots,g_m$ defined on $\partial_e(T(A))$, and then proceed in exactly the same way as above with a finite set of extremal traces $\{\tau_i\in g_i^{-1}(\{1\}) \mid 1\le i\le m\}$. As noted in \cite[Theorem 4.12]{Jacelon:2021vc}, this generalisation also works for $\fp<\infty$ when $X$ is an interval. The reason is that in this case, Proposition~\ref{prop:interval} tells us in advance the form of the homeomorphism $h_i$ that transports (a diffuse approximant of) $\mu_{\tau_i\circ\psi}$ to (a diffuse approximant of) $\mu_{\tau_i\circ\varphi}$. We can therefore arrange from the start for our open cover $U_1,\dots,U_m$, to which $g_1,\dots,g_m$ are subordinate, to have the property that $\tau\circ\psi(|f\circ h_i-f|^\fp)$ is close to $\tau_i\circ\psi(|f\circ h_i-f|^\fp)$ for every $\tau\in U_i$ and $f\in B_{\mathrm{diam}(X,\rho)}(C(X))$. In other words, we still have a bound of the form \eqref{eqn:bound1} and we can complete the proof just as above. It is not clear to me whether this works in general.
\end{remark}

\begin{theorem} \label{thm:lowerbound}
Let $(X,\rho)$ be a nonempty compact metric space, let $A$ be a simple, unital $\cs$-algebra with $T(A)\ne\emptyset$, and let $\varphi,\psi\colon C(X) \to A$ be unital $^*$-monomorphisms. Then,
\[
W_\infty(\varphi,\psi) \le d^\rr_{U}(\varphi,\psi).
\]
If $X$ is a subset of the complex plane and $\rho$ is the Euclidean metric, then
\[
W_2(\varphi,\psi) \le d^\cc_{U,2}(\varphi,\psi).
\]
\end{theorem}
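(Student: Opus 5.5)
Both inequalities are proved trace by trace. Since $W_\fp(\varphi,\psi)$ is a supremum over $\tau\in T(A)$ and $\|\cdot\|_\fp\ge\|\cdot\|_{\fp,\tau}$, it suffices to show the following for a fixed $\tau\in T(A)$ and a fixed unitary $u\in U(A)$:
\[
W_\infty(\mu_{\tau\circ\varphi},\mu_{\tau\circ\psi})\le \sup_{f\in\mathrm{Lip}^\rr_1}\|\varphi(f)-u\psi(f)u^*\|
\]
and
\[
W_2(\mu_{\tau\circ\varphi},\mu_{\tau\circ\psi})\le\sup_{f\in\mathrm{Lip}^\cc_1}\|\varphi(f)-u\psi(f)u^*\|_{2,\tau}.
\]
We then take the infimum over $u$ and the supremum over $\tau$. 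Replacing $\psi$ by $\mathrm{Ad}(u)\circ\psi$ does not change $\tau\circ\psi$, so we may take $u=1$. Since $A$ is simple, $\tau$ is faithful, and we pass to $\cM_\tau:=\pi_\tau(A)''$ with its faithful normal trace.

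\textbf{The $W_2$ inequality.} Here $X\subseteq\cc$ with the Euclidean metric. The coordinate function $z\colon X\to\cc$ lies in $\mathrm{Lip}^\cc_1(X,\rho)$, so the right-hand side is at least $\|x-y\|_{2,\tau}$, where $x:=\varphi(z)$ and $y:=\psi(z)$ are normal elements of $A$. Their spectral measures with respect to $\tau$ are exactly $\mu_{\tau\circ\varphi}$ and $\mu_{\tau\circ\psi}$. Theorem~\ref{thm:hw}, applied in $\cM_\tau$, gives $W_{2,\tau}(x,y)\le\|x-y\|_{2,\tau}$, which is the required bound. There is one caveat. Theorem~\ref{thm:hw} is stated for a faithful trace, and the GNS trace on $\cM_\tau$ is faithful and normal. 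Its proof reduces to finite spectrum via Borel functional calculus and continuity (Lemma~\ref{lemma:continuity} in its von Neumann form). This works inside $\cM_\tau$, so the step is routine.

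\textbf{The $W_\infty$ inequality.} This is the substantive part. I would not invoke \cite[Theorem 2.1(1)]{Hiai:1989aa}. Instead I would use the characterisation \eqref{eqn:winfty}. Set $r:=\sup_{f\in\mathrm{Lip}^\rr_1}\|\varphi(f)-\psi(f)\|$. It suffices to show $\mu_{\tau\circ\varphi}(U)\le\mu_{\tau\circ\psi}(U_{r+\delta})$ for every closed $U$ and every $\delta>0$; regularity then handles Borel sets. Take the real $1$-Lipschitz function $f=\rho(\cdot,U)$, so that $a:=\varphi(f)$ and $b:=\psi(f)$ are self-adjoint with $\|a-b\|\le r$. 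The spectral projection $\chi_{\{0\}}(a)$ has trace $\mu_{\tau\circ\varphi}(U)$, and $\chi_{[0,r+\delta)}(b)$ has trace $\mu_{\tau\circ\psi}(U_{r+\delta})$.

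The key estimate is Weyl-type: for self-adjoint $a,b$ in a finite von Neumann algebra with $\|a-b\|\le r$,
\[
\tau(\chi_{(-\infty,s]}(a))\le\tau(\chi_{(-\infty,s+r]}(b)).
\]
I would prove this by the standard argument. Let $p=\chi_{(-\infty,s]}(a)$ and $q=\chi_{(s+r,\infty)}(b)$. If $p\wedge q\ne0$, then for a unit vector $\xi$ in its range (in the GNS Hilbert space) we would have $\langle a\xi,\xi\rangle\le s$ and $\langle b\xi,\xi\rangle>s+r$, contradicting $\|a-b\|\le r$. Hence $p\wedge q=0$. The parallelogram law $\tau(p)-\tau(p\wedge q)=\tau(p\vee q)-\tau(q)$ then gives $\tau(p)\le1-\tau(q)$, which is the claim.

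Applying this with $s=0$ gives $\mu_{\tau\circ\varphi}(U)\le\mu_{\tau\circ\psi}(\{f\le r\})\le\mu_{\tau\circ\psi}(U_{r+\delta})$. Hence $W_\infty\le r$, and after the infimum over $u$ and the supremum over $\tau$ we get $W_\infty(\varphi,\psi)\le d^\rr_U(\varphi,\psi)$.

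I expect the main obstacle to be this Weyl-type spectral projection estimate in the tracial (non-factor) GNS closure. Once it is in hand, everything else is bookkeeping with \eqref{eqn:winfty}.
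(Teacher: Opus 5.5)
Your proof is correct. The $W_2$ half is the paper's argument made more direct: the paper first shows $W_2(\varphi,\psi)=\sup_{f\in\mathrm{Lip}^\cc_1(X,\rho)}W_2(\varphi(f),\psi(f))$, with the supremum attained at the inclusion $X\subseteq\cc$, and then applies Theorem~\ref{thm:hw} in $\pi_\tau(A)''$. You plug in the coordinate function from the outset.

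The $W_\infty$ half takes a genuinely different route. The paper relies on two citations and then exchanges $\sup_f\inf_u\le\inf_u\sup_f$:
\begin{enumerate}
\item \cite[Proposition 3.4]{Jacelon:2021wa}, for the Lipschitz-supremum formula $W_\infty(\varphi,\psi)=\sup_{f\in\mathrm{Lip}^\rr_1}W_\infty(\varphi(f),\psi(f))$;
\item \cite[Theorem 2.1(3)]{Hiai:1989aa}, for $W_\infty(x,y)\le d_U(x,y)$ when $x,y$ are self-adjoint elements of the GNS closure.
\end{enumerate}
You merge these two ingredients into one elementary argument. The distance functions $\rho(\cdot,U)$ are exactly the test functions that make the Lipschitz supremum recover $W_\infty$ through \eqref{eqn:winfty}. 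Your spectral-projection comparison carries the Weyl-type content of the Hiai--Nakamura result. It uses $p\wedge q=0$ together with Kaplansky's parallelogram law, which holds in any von Neumann algebra and is respected by any trace, so no factoriality is needed.

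The details check out:
\begin{enumerate}
\item the strict inequality $\langle b\xi,\xi\rangle>s+r$ holds for unit vectors in the range of $\chi_{(s+r,\infty)}(b)$;
\item positivity of $a$ and $b$ turns $\chi_{(-\infty,0]}(a)$ and $\chi_{(-\infty,r]}(b)$ into $\chi_{\{0\}}(a)$ and $\chi_{[0,r]}(b)$;
\item inner regularity passes the estimate from closed sets to Borel sets.
\end{enumerate}

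Your version buys self-containment and independence from \cite{Hiai:1989aa}, part of which the paper itself calls into question. The paper's version buys modularity. The same skeleton, a Lipschitz-supremum formula plus a single-operator inequality, handles both $\fp=\infty$ and $\fp=2$ by swapping Hiai--Nakamura for Hoffman--Wielandt. It also makes plain that no commutation between the images of $\varphi$ and $\psi$ is used.

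Since your spectral comparison is unchanged when $b$ is replaced by a unitary conjugate, choosing the unitary separately for each $U$ also gives the paper's intermediate bound $W_\infty(\varphi,\psi)\le\sup_f d_U(\varphi(f),\psi(f))$.
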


\begin{proof}
The $\fp=\infty$ case is \cite[Corollary 3.6]{Jacelon:2021wa}, but we emphasise here that the proof does not depend on the embeddings $\varphi,\psi\colon C(X) \to A$ having commuting images. Instead, there are two key results. The first is \cite[Proposition 3.4]{Jacelon:2021wa}, which establishes that  
\begin{equation} \label{eqn:lipsup}
W_\infty(\varphi,\psi) = \sup_{f\in\mathrm{Lip}^\rr_1(X,\rho)} W_\infty(\varphi(f),\psi(f)).
\end{equation}
The second is \cite[Theorem 2.1(3)]{Hiai:1989aa}, which gives us that $W_\infty(x,y) \le d_U(x,y)$ for self-adjoint elements $x$ and $y$ in a semifinite von Neumann algebra. In particular, this holds in the GNS von Neumann algebra $\pi_\tau(A)''$ associated with any given $\tau\in T(A)$. As in the proof of \cite[Lemma 3.3]{Jacelon:2021wa} (and the proof of Corollary~\ref{cor:normal}), it follows that if $x,y\in A$ are self adjoint, then
$W_\infty(x,y) \le d_U(x,y)$. We conclude as in the proof of \cite[Corollary 3.6]{Jacelon:2021wa} that
\begin{align} \label{eqn:winftychain}
W_\infty(\varphi,\psi) &= \sup_{f\in\mathrm{Lip}^\rr_1(X,\rho)} W_\infty(\varphi(f),\psi(f)) \notag\\
&\le \sup_{f\in\mathrm{Lip}^\rr_1(X,\rho)} d_U(\varphi(f),\psi(f)) \notag\\
&= \sup_{f\in\mathrm{Lip}^\rr_1(X,\rho)} \inf_{u\in U(A)} \|\varphi(f)-u\psi(f)u^*\| \notag\\
&\le \inf_{u\in U(A)} \sup_{f\in\mathrm{Lip}^\rr_1(X,\rho)} \|\varphi(f)-u\psi(f)u^*\| \notag\\
&= d^\rr_U(\varphi,\psi).
\end{align}
We now turn to $\fp\in[1,\infty)$. It is shown in \cite[Proposition 2.3]{Jacelon:2021vc} that $\sup_{f\in\mathrm{Lip}^\rr_1(X,\rho)} W_\fp(f_*\mu,f_*\nu) \le W_\fp(\mu,\nu)$ for every $\mu,\nu\in\prob(X)$. Equality holds if $X$ is an interval, because the inclusion map $X\subseteq\rr$ is contained in $\mathrm{Lip}^\rr_1(X,\rho)$. In fact, the same argument shows that
\[
\sup_{f\in\mathrm{Lip}^\cc_1(X,\rho)} W_\fp(f_*\mu,f_*\nu) \le W_\fp(\mu,\nu)
\]
and that for planar $X$, equality holds via the inclusion $X\subseteq\cc$. For such an $X$, we let $\iota_f \colon C(\sigma(f)) \to C(X)$ denote the functional calculus map and argue as in \cite[Proposition 3.4]{Jacelon:2021wa} to see that
\begin{align*}
W_\fp(\varphi,\psi) &= \sup_{\tau\in T(A)}W_\fp(\mu_{\tau\circ\varphi},\mu_{\tau\circ\psi}) \notag \\
&= \sup_{\tau\in T(A)} \sup_{f\in\mathrm{Lip}^\cc_1(X,\rho)} W_\fp(f_*\mu_{\tau\circ\varphi},f_*\mu_{\tau\circ\psi}) \notag \\
&= \sup_{\tau\in T(A)} \sup_{f\in\mathrm{Lip}^\cc_1(X,\rho)} W_\fp(\mu_{\tau\circ\varphi\circ\iota_f},\mu_{\tau\circ\psi\circ\iota_f}) \notag \\
&= \sup_{f\in\mathrm{Lip}^\cc_1(X,\rho)} \sup_{\tau\in T(A)}  W_\fp(\mu_{\tau\circ\varphi\circ\iota_f},\mu_{\tau\circ\psi\circ\iota_f}) \notag \\
&= \sup_{f\in\mathrm{Lip}^\cc_1(X,\rho)} W_\fp(\varphi(f),\psi(f)).
\end{align*}
So, we have a version of \eqref{eqn:lipsup} for finite $\fp$. When $\fp=2$, we replace \cite[Theorem 2.1(3)]{Hiai:1989aa} by Theorem~\ref{thm:hw} and conclude as in \eqref{eqn:winftychain} that $W_2(\varphi,\psi) \le d^\cc_{U,2}(\varphi,\psi)$.
\end{proof}

\begin{corollary} \label{cor:transport}
Let $A$ be a simple, separable, unital, nuclear, $\js$-stable $\cs$-algebra with $T(A)\ne\emptyset$.
\begin{enumerate}
\item \label{it:main1} If $A$ has real rank zero and finitely many extremal traces, and $X$ is a non-two-dimensional compact, connected Riemannian manifold with torsion-free $K$-theory, then
\[
d^\cc_{U}(\varphi,\psi) = d^\rr_{U}(\varphi,\psi) = W_\infty(\varphi,\psi)
\]
for every pair of unital $^*$-monomorphisms $\varphi,\psi\colon C(X) \to A$ with $K_*(\varphi)=K_*(\psi)$.

\item \label{it:main1a} If $A$ has a unique trace, or has real rank zero and finitely many extremal traces, and $X$ is a nonempty compact, convex subset of Euclidean space, then
\[
d^\cc_{U}(\varphi,\psi) = d^\rr_{U}(\varphi,\psi) = W_\infty(\varphi,\psi)
\]
for every pair of unital $^*$-monomorphisms $\varphi,\psi\colon C(X) \to A$.

\item \label{it:main1b} If $A$ has real rank zero and finitely many extremal traces, and $X$ is a planar circle, then
\[
d^\cc_{U}(\varphi,\psi) = d^\rr_{U}(\varphi,\psi) = W_\infty(\varphi,\psi) \quad \text{and} \quad  W_2(\varphi,\psi) \le d^\cc_{U,2}(\varphi,\psi) \le \tfrac{\pi}{2}\,W_2(\varphi,\psi)
\]
for every pair of unital $^*$-monomorphisms $\varphi,\psi\colon C(X) \to A$ with $K_1(\varphi)=K_1(\psi)$.

\item \label{it:main2} If $A$ has a unique trace, or has real rank zero and finitely many extremal traces, and $X$ is a compact, convex subset of the Euclidean plane, then
\[
d^\cc_{U,2}(\varphi,\psi) = W_2(\varphi,\psi)
\]
for every pair of unital $^*$-monomorphisms $\varphi,\psi\colon C(X) \to A$.
\end{enumerate}
\end{corollary}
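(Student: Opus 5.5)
The plan is to sandwich each distance between a lower bound from Theorem~\ref{thm:lowerbound} and an upper bound from Theorem~\ref{thm:transport}, using the transport constants computed in Theorems~\ref{thm:convex}, \ref{thm:manifolds} and \ref{thm:infinitycircle}.

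\textbf{Comparison of $d^\rr_U$ and $d^\cc_U$.} First I record that $d^\rr_U\le d^\cc_U$, since $\mathrm{Lip}^\rr_1(X,\rho)\subseteq\mathrm{Lip}^\cc_1(X,\rho)$. With this, each statement with three equal $\infty$-quantities reduces to the chain
\[
W_\infty(\varphi,\psi)\le d^\rr_U(\varphi,\psi)\le d^\cc_U(\varphi,\psi)=d^\cc_{U,\infty}(\varphi,\psi)\le k_{(X,\rho,\infty)}\,W_\infty(\varphi,\psi).
\]
The first inequality is Theorem~\ref{thm:lowerbound}, which applies since $A$ is simple and unital with $T(A)\ne\emptyset$. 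The last inequality is Theorem~\ref{thm:transport} with $\fp=\infty$. It then suffices to check, case by case, that the hypotheses of Theorem~\ref{thm:transport} hold and that $k_{(X,\rho,\infty)}=1$.

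\textbf{Parts (1) and (2).} For (1), $A$ has real rank zero with finitely many extremal traces, $K^*(X)$ is assumed finitely generated (compact manifold) and torsion free, and $K_*(\varphi)=K_*(\psi)$ is assumed. Theorem~\ref{thm:manifolds} gives $k_{(X,\rho,\infty)}=1$ because $\dim X\ne2$.

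For (2), $X$ is convex, hence contractible, so $K^*(X)=K^*(\mathrm{pt})$, which is finitely generated and torsion free, and $K^1(X)=0$. Both hypothesis branches of Theorem~\ref{thm:transport} are therefore available. The condition $K_*(\varphi)=K_*(\psi)$ holds automatically: $K_1(C(X))=0$, and $K_0(C(X))=\zz[1]$ is sent to $[1_A]$ by any unital map. Theorem~\ref{thm:convex} gives $k=1$. Connectedness, needed in Definition~\ref{def:transport}, holds by convexity.

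\textbf{Part (4).} The same verification as in (2) applies, now with $\fp=2$. Theorem~\ref{thm:lowerbound} gives $W_2(\varphi,\psi)\le d^\cc_{U,2}(\varphi,\psi)$, since $X$ is planar with the Euclidean metric. Theorem~\ref{thm:transport} together with Theorem~\ref{thm:convex} gives $d^\cc_{U,2}(\varphi,\psi)\le W_2(\varphi,\psi)$.

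\textbf{Part (3), the circle.} Here $K^*(S^1)=\zz\oplus\zz$ is finitely generated and torsion free, but $K^1\ne0$. So we must use the real rank zero, finitely-many-traces branch, which is exactly the hypothesis in (3). The $K_0$-component of $K_*(\varphi)$ is again forced by unitality, so the assumed $K_1(\varphi)=K_1(\psi)$ yields $K_*(\varphi)=K_*(\psi)$. Theorem~\ref{thm:infinitycircle} gives $k_\infty=1$ and $k_\fp\le\pi/2$, and combining these with Theorem~\ref{thm:lowerbound} (for $\fp=\infty$ and $\fp=2$) gives both displayed statements.

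\textbf{Main obstacle.} The main subtlety is bookkeeping of the $K$-theoretic hypotheses. The need to be careful is that in (2) and (4) no $K$-theory condition is stated, so it must be derived, as above, from contractibility and unitality.
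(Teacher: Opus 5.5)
Your proposal is correct and follows the paper's proof essentially verbatim. In each case you sandwich the distance between the lower bound of Theorem~\ref{thm:lowerbound} and the upper bound of Theorem~\ref{thm:transport}, with the transport constant supplied by Theorem~\ref{thm:manifolds}, \ref{thm:convex} or \ref{thm:infinitycircle}. You derive the $K$-theoretic hypothesis from contractibility, respectively from $K_0(C(S^1))=\zz[1]$, together with unitality, exactly as the paper does; your remark that $d^\rr_U\le d^\cc_U$ because $\mathrm{Lip}^\rr_1(X,\rho)\subseteq\mathrm{Lip}^\cc_1(X,\rho)$ just spells out a step the paper leaves implicit.
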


\begin{proof}
We apply the theorems above. Note that all of the spaces $X$ being considered have finitely generated, torsion-free $K$-theory.
\begin{enumerate}
\item By Theorem~\ref{thm:lowerbound}, Theorem~\ref{thm:transport} and Theorem~\ref{thm:manifolds} (in that order), we have
\[
W_\infty(\varphi,\psi) \le d^\rr_{U}(\varphi,\psi) \le d^\cc_{U}(\varphi,\psi) \le k_{(X,\rho,\infty)} \cdot W_\infty(\varphi,\psi) = W_\infty(\varphi,\psi)
\]
so equality holds throughout.
\item Since $X$ is contractible, we have that $K_0(C(X))$ is generated by the class of the unit and $K_1(C(X))=0$, so $K_*(\varphi)=K_*(\psi)$ is automatic for any pair of unital $^*$-monomorphisms $\varphi,\psi\colon C(X) \to A$. The equality $d^\cc_{U}(\varphi,\psi) = d^\rr_{U}(\varphi,\psi) = W_\infty(\varphi,\psi)$ now follows exactly as in \eqref{it:main1}, with Theorem~\ref{thm:manifolds} replaced by Theorem~\ref{thm:convex} .
\item Since $K_0(C(S^1))$ is generated by the class of the unit, any two unital $^*$-monomorphisms $\varphi,\psi \colon C(S^1) \to A$ automatically agree on $K_0$. Replacing Theorem~\ref{thm:manifolds} by Theorem~\ref{thm:infinitycircle}, we conclude exactly as in \eqref{it:main1} that $d^\cc_{U}(\varphi,\psi) = d^\rr_{U}(\varphi,\psi) = W_\infty(\varphi,\psi)$, and that
\[
W_2(\varphi,\psi) \le d^\cc_{U,2}(\varphi,\psi) \le k_{(X,\rho,2)} \cdot W_2(\varphi,\psi) \le \tfrac{\pi}{2}\, W_2(\varphi,\psi).
\]
\item Similarly, Theorem~\ref{thm:lowerbound}, Theorem~\ref{thm:transport} and Theorem~\ref{thm:convex} imply that
\[
W_2(\varphi,\psi) \le d^\cc_{U,2}(\varphi,\psi) \le k_{(X,\rho,2)} \cdot W_2(\varphi,\psi) = W_2(\varphi,\psi)
\]
and hence that $d^\cc_{U,2}(\varphi,\psi) = W_2(\varphi,\psi)$ for every pair of unital $^*$-monomorphisms $\varphi,\psi\colon C(X) \to A$. \qedhere
\end{enumerate}
\end{proof}

Now we specialise to planar domains $X$ representing spectra of normal elements. We note that the $\fp=\infty$ case of Corollary~\ref{cor:normal}\eqref{it:main3} was already observed in \cite[Corollary 4.12]{Jacelon:2021wa} under the assumption that $[x]=[y]=0$. Here we are simply observing that $K_1$-triviality can be removed as a hypothesis because the transport map provided by Theorem~\ref{thm:infinitycircle} is homotopic to the identity map on the circle.

\begin{corollary} \label{cor:normal}
Let $A$ be a simple, separable, unital, nuclear, $\js$-stable $\cs$-algebra with $T(A)\ne\emptyset$.
\begin{enumerate}
\item \label{it:main3} If $A$ has real rank zero and finitely many extremal traces, and $x,y\in A$ are full-spectrum unitaries with $[x]=[y]$ in $K_1(A)$, then
\[
d_{U}(x,y)=W_\infty(x,y) \quad \text{and} \quad W_2(x,y) \le d_{U,2}(x,y) \le \tfrac{\pi}{2}\,W_2(x,y).
\]
\item \label{it:main4} If $A$ has a unique trace, or has real rank zero and finitely many extremal traces, and $x,y\in A$ are normal elements such that $\sigma(x)=\sigma(y)$ is convex, then
\[
d_{U,2}(x,y)=W_2(x,y).
\]
\end{enumerate}
\end{corollary}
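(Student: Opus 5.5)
We will reduce both parts to Corollary~\ref{cor:transport} by passing from normal elements to the functional calculus embeddings. Given normal $x,y\in A$ with $X:=\sigma(x)=\sigma(y)$, let $\varphi=\varphi_x,\psi=\varphi_y\colon C(X)\to A$ be the functional calculus maps $f\mapsto f(x)$, $f\mapsto f(y)$. Since $A$ is simple and the spectra coincide, these are unital $^*$-monomorphisms. By construction $\mu_{\tau\circ\varphi}=\mu_{\tau,x}$ and $\mu_{\tau\circ\psi}=\mu_{\tau,y}$ for every $\tau\in T(A)$, so $W_\fp(\varphi,\psi)=W_\fp(x,y)$ for every $\fp$.

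The key comparison is between $d^\cc_{U,\fp}(\varphi,\psi)$ and $d_{U,\fp}(x,y)$. For the upper bound, the inclusion $\iota\colon X\hookrightarrow\cc$ lies in $\mathrm{Lip}^\cc_1(X,\rho)$ and satisfies $\varphi(\iota)=x$ and $\psi(\iota)=y$. Hence, for every $u$,
\[
\|x-uyu^*\|_\fp\le\sup_{f\in\mathrm{Lip}^\cc_1}\|\varphi(f)-u\psi(f)u^*\|_\fp,
\]
and so $d_{U,\fp}(x,y)\le d^\cc_{U,\fp}(\varphi,\psi)$. The same argument with $\mathrm{Lip}^\rr_1$ does not directly give $d_U(x,y)\le d^\rr_U$ in part~\eqref{it:main3}. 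However, $d^\cc_U=d^\rr_U$ there, so we only need $d_U(x,y)\le d^\cc_U(\varphi,\psi)$, which is the $\fp=\infty$ case of the displayed inequality.

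For the lower bounds we avoid the embedding-level statement and argue tracewise. For each $\tau\in T(A)$, pass to the GNS von Neumann algebra $\pi_\tau(A)''$, where the induced trace is faithful; the spectral measures of $x$ and $y$ are unchanged. Theorem~\ref{thm:hw} gives
\[
W_{2,\tau}(x,uyu^*)\le\|x-uyu^*\|_{2,\tau},
\]
and $W_{2,\tau}(x,uyu^*)=W_{2,\tau}(x,y)$ by unitary invariance. Taking the supremum over $\tau$ and then the infimum over $u$ yields $W_2(x,y)\le d_{U,2}(x,y)$. For $\fp=\infty$ in part~\eqref{it:main3}, we use the same GNS argument with \cite[Theorem 2.1(3)]{Hiai:1989aa}, giving $W_\infty(x,y)\le d_U(x,y)$ for self-adjoint elements, as invoked in the proof of Theorem~\ref{thm:lowerbound}. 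This only covers self-adjoints, so I expect this step to be the main obstacle. The cleaner route for unitaries is the chain
\[
W_\infty(x,y)=W_\infty(\varphi,\psi)\le d^\rr_U(\varphi,\psi),
\]
using Theorem~\ref{thm:lowerbound}, combined with Corollary~\ref{cor:transport}\eqref{it:main1b}. Together these sandwich all quantities once we know $d_U(x,y)\le d^\cc_U(\varphi,\psi)=W_\infty(x,y)$. What remains is $W_\infty(x,y)\le d_U(x,y)$, and for this I would try to pass through $\mathrm{Lip}^\rr_1$ functions as in \eqref{eqn:winftychain}.

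With these comparisons, part~\eqref{it:main4} follows. Corollary~\ref{cor:transport}\eqref{it:main2}, applied with $X$ compact, convex and planar (it is bounded because it is a spectrum), gives
\[
d_{U,2}(x,y)\le d^\cc_{U,2}(\varphi,\psi)=W_2(\varphi,\psi)=W_2(x,y),
\]
and the Hoffman--Wielandt lower bound closes the chain. For part~\eqref{it:main3}, full spectrum means $X=\mathbb{T}$ is a planar circle. The hypothesis $[x]=[y]$ means $K_1(\varphi)=K_1(\psi)$, since the generator of $K_1(C(\mathbb{T}))$ is sent to $[x]$ and to $[y]$ respectively. Corollary~\ref{cor:transport}\eqref{it:main1b} then gives
\[
d_{U,2}(x,y)\le d^\cc_{U,2}(\varphi,\psi)\le\tfrac{\pi}{2}W_2(\varphi,\psi)=\tfrac{\pi}{2}W_2(x,y)
\]
and $d_U(x,y)\le W_\infty(x,y)$. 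The lower bounds come from the tracewise argument above.
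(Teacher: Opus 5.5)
\textbf{Verdict: genuine gap in part~\eqref{it:main3}.} You do not prove $W_\infty(x,y)\le d_U(x,y)$ for unitaries, so the equality $d_U(x,y)=W_\infty(x,y)$ does not follow.

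Part~\eqref{it:main4} and the $W_2$ inequalities in part~\eqref{it:main3} are handled essentially as in the paper. You use the functional calculus embeddings, $d_{U,\fp}(x,y)\le d^\cc_{U,\fp}(\varphi,\psi)$ via the inclusion $X\subseteq\cc$, Corollary~\ref{cor:transport} for the upper bounds, and Theorem~\ref{thm:hw} in each $\pi_\tau(A)''$ for the lower bound. For the $W_\infty$ equality, however, you establish only $d_U(x,y)\le d^\cc_U(\varphi,\psi)=W_\infty(x,y)$. Your closing sentence, ``the lower bounds come from the tracewise argument above'', does not cover the reverse inequality: that argument handles only $W_2$, or $W_\infty$ for self-adjoint elements.

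The route you suggest, through $\mathrm{Lip}^\rr_1$ as in \eqref{eqn:winftychain}, cannot close this. That chain bounds $W_\infty(\varphi,\psi)$ by the embedding-level quantity $d^\rr_U(\varphi,\psi)=\inf_u\sup_f\|f(x)-uf(y)u^*\|$. You have already shown this quantity also dominates $d_U(x,y)$, so it is a common upper bound and says nothing about how $W_\infty(x,y)$ and $d_U(x,y)$ compare. To run the argument against $\|x-uyu^*\|$ directly, you would need $\|f(x)-f(x')\|\le\|x-x'\|$ for every real $1$-Lipschitz $f$ on the circle and all non-commuting unitaries $x,x'$. This fails in general: Lipschitz functions need not be operator Lipschitz, let alone with constant $1$.

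The inequality $W_\infty\le d_U$ is genuinely special to the class of elements involved; \cite{Holbrook:1992aa} shows it fails for general normal matrices. So it has to be imported. The paper uses the unitary case of Hiai--Nakamura, \cite[Theorem 2.1(5)]{Hiai:1989aa}, which is the von Neumann algebra version of Bhatia--Davis. It says that for unitaries in the finite von Neumann algebra $\pi_\tau(A)''$, the $W_\infty$-distance between their spectral measures is at most their operator-norm distance. Apply it to $\pi_\tau(x)$ and $\pi_\tau(uyu^*)$; the latter is unitary with spectral measure $\mu_{\tau,y}$. This gives $W_{\infty,\tau}(x,y)\le\|x-uyu^*\|$. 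Taking the supremum over $\tau$ and the infimum over $u$ yields $W_\infty(x,y)\le d_U(x,y)$, exactly parallel to your Hoffman--Wielandt step, and your sandwich then closes.
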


\begin{proof}
Let $x,y\in A$ be normal. As in the proof of Theorem~\ref{thm:lowerbound}, we apply Theorem~\ref{thm:hw} to the GNS von Neumann algebras $\cM_\tau:=\pi_\tau(A)''$ associated with traces $\tau\in T(A)$ to deduce that
\[
W_2(x,y) = \sup_{\tau\in T(A)} W_{2,\tau}(x,y) \le \sup_{\tau\in T(A)} d^{\cM_\tau}_{U,2,\tau}(x,y) \le \sup_{\tau\in T(A)} d^{\pi_\tau(A)}_{U,2,\tau}(x,y) \le d_{U,2}(x,y)
\]
where the superscripts indicate the algebra in which the distance should be measured. Similarly, an application of \cite[Theorem 2.1(5)]{Hiai:1989aa} (just as we applied \cite[Theorem 2.1(3)]{Hiai:1989aa} in the proof of Theorem~\ref{thm:lowerbound}) gives us that $W_\infty(x,y) \le d_U(x,y)$ if $x$ and $y$ are unitaries.

Now we apply Corollary~\ref{cor:transport} to the functional calculus maps $\varphi_x,\varphi_y\colon C(X)\to A$ associated with $x$ and $y$, where $X\subseteq\cc$ is either the circle $S^1$ or is convex. Note that in both cases, we have $K_*(\varphi_x)=K_*(\varphi_y)$. For \eqref{it:main3}, Corollary~\ref{cor:transport}\eqref{it:main1b} then gives
\[
W_\infty(\varphi_x,\varphi_y) = W_\infty(x,y) \le d_U(x,y) \le d^\cc_U(\varphi_x,\varphi_y) = W_\infty(\varphi_x,\varphi_y)
\]
so equality holds throughout. Similarly, we have
\[
W_2(x,y) \le d_{U,2}(x,y) \le d^\cc_{U,2}(\varphi_x,\varphi_y) \le \tfrac{\pi}{2}\, W_2(\varphi_x,\varphi_y) = \tfrac{\pi}{2}\, W_2(x,y).
\]
For \eqref{it:main4}, we have from Corollary~\ref{cor:transport}\eqref{it:main2} that
\[
W_2(\varphi_x,\varphi_y) = W_2(x,y) \le d_{U,2}(x,y) \le d^\cc_{U,2}(\varphi_x,\varphi_y) = W_2(\varphi_x,\varphi_y)
\]
so equality holds throughout.
\end{proof}

\begin{remark} \label{rem:final}
In \cite[Theorem 5.2]{Jacelon:2014aa}, it is shown that $d_U(x,y)=W_\infty(x,y)$ for self-adjoint elements $x$ and $y$ in a simple, separable, unital, exact, $\js$-stable $\cs$-algebra $A$ with $T(A)\ne\emptyset$ provided that $\sigma(x)=\sigma(y)=X$ is connected. Under the additional assumptions on $A$ imposed by Corollary~\ref{cor:normal}\eqref{it:main4}, we also have $d_{U,2}(x,y)=W_2(x,y)$ (and indeed, as highlighted in \cite[Theorem 4.12]{Jacelon:2021vc}, $d_{U,\fp}(x,y)=W_\fp(x,y)$ for any $\fp$). It is a consequence of the classification of embeddings (whether by the Cuntz semigroup, as used in \cite[Theorem 5.2]{Jacelon:2014aa}, or by $K$-theory and traces, as used here and in \cite{Jacelon:2021wa,Jacelon:2021vc}) that these metrics are all topologically equivalent on the set of approximate unitary equivalence classes of self-adjoint elements of $A$ with common connected spectrum $X$. This is in parallel with the fact discussed after Definition~\ref{def:transport} that all of the Wasserstein metrics yield the weak$^*$-topology on the space of faithful Borel probability measures on a connected space.
\end{remark}

Finally, we turn to Corollary~\ref{cor:d}. The goal is to establish a version of Corollary~\ref{cor:transport}\eqref{it:main2} for the `tracial Wasserstein space' $(\js(X),W_2)$ associated with a compact, convex planar domain $(X,\rho)$. Originating in \cite[Section 4.4.4]{Jacelon:2021vc} and \cite[Theorem 4.4]{Jacelon:2022wr} (see also \cite[Corollary 3.7]{Jacelon:2025ab}), the projectionless, classifiable $\cs$-algebra $\js(X)$ is built as an inductive limit of blocks
\[
X_{p_n,q_n} := \{f\in C(X,M_{p_n}\otimes M_{q_n}) \mid f(x_0)\in M_{p_n}\otimes1_{q_n},f(x_1)\in 1_{p_n}\otimes M_{q_n}\}
\]
called generalised dimension drop algebras. Here, $x_0 \ne x_1\in X$ are fixed base points and $(p_n,q_n)$ is a sequence of coprime natural numbers. The connecting maps $\zeta_n\colon X_{p_n,q_n} \to X_{p_{n+1},q_{n+1}}$ are unitarily conjugate to diagonal embeddings $f\mapsto\mathrm{diag}(f\circ\xi_1,\dots,f\circ\xi_{M_n})$, where most of the eigenvalue functions $\xi_i$ are the identity map $X\to X$ (which ensures that $\partial_e(T(\js(X))) \cong \partial_e(T(X_{p_n,q_n})) \cong X$), most of the rest are constant (varying densely over $X$ as the inductive construction progresses to ensure that the limit is simple), and the rest are Lipschitz maps. The proportions guarantee that elements in each compact `nucleus'
\[
\mathcal{D}(X_{p_n,q_n}) := \{f \in X_{p_n,q_n} \mid \|f\|\le \mathrm{diam}(X),\: \|f(x)-f(y)\|\le\rho(x,y)\:\text{ for every } x,y\in X\}
\]
are mapped to tracially Lipschitz elements in the limit. We identify $T(\js(X))$ with the inverse limit $\varprojlim(\prob(X),T(\zeta_n))$, which we equip with the limiting $\fp$-Wasserstein metric
\[
\overrightarrow{W}_\fp((\mu_n),(\nu_n)) := \limsup_{n\to\infty}W_\fp(\mu_n,\nu_n).
\]
By construction, every sequence $(\mu_n)$ in $T(\js(X))$ is weak$^*$-convergent. Sending $(\mu_n)$ to its limit gives, for any $\fp\in[1,\infty)$, an affine isometric isomorphism $(T(\js(X)),\overrightarrow{W}_\fp) \cong (\prob(X),W_\fp)$. This is the metric structure that we use to measure the uniform tracial $\fp$-Wasserstein distance
\[
\overrightarrow{W_\fp}(\varphi,\psi) := \sup_{\tau\in T(A)} W_\fp(\mu_{\tau\circ\varphi},\mu_{\tau\circ\psi})
\]
between unital $^*$-homomorphisms $\varphi$ and $\psi$ from $\js(X)$ into a tracial $\cs$-algebra $A$ (cf.\ \eqref{eqn:tracialwass}). In other words, if $\varphi_n:=\varphi\circ\zeta_{n,\infty}$ and $\psi_n:=\psi\circ\zeta_{n,\infty}$ denote the unital embeddings of $X_{p_n,q_n}$ into $A$ corresponding to the inclusion maps $\zeta_{n,\infty} \colon X_{p_n,q_n} \to \js(X)=\varinjlim(X_{p_k,q_k},\zeta_k)$, then
\[
\overrightarrow{W_\fp}(\varphi,\psi) = \sup_{\tau\in T(A)} \overrightarrow{W}_\fp((\mu_{\tau\circ\varphi_n}),(\mu_{\tau\circ\psi_n})).
\]
We define the $\fp$-norm unitary-orbit distance between $\varphi$ and $\psi$ to be
\[
\overrightarrow{d_{U,\fp}}(\varphi,\psi) := \limsup_{n\to\infty} \inf_{u\in\mathcal{U}(B)} \sup_{f\in\mathcal{D}(X_{p_n,q_n})} \|\varphi_n(f)-u\psi_n(f)u^*\|_\fp.
\]
In other words, $\overrightarrow{d_{U,\fp}}(\varphi,\psi) < \eps$ if and only if, for every large $n$, there is a unitary $u\in A$ such that for every $f\in \mathcal{D}(X_{p_n,q_n})$ and every trace $\tau\in T(A)$, we have $\|\varphi_n(f)-u\psi_n(f)u^*\|_{\fp,\tau}<\eps$.

Our construction does not work if $X=\{\bullet\}$ is a singleton, so we adopt the convention that $\js(\bullet)$ is the Jiang--Su algebra $\js$ \cite{Jiang:1999hb}. Equally, $\js(\bullet)$ is the `quantum intertwining gap' limit of any sequence $(\js(X_k),W_\fp)$ associated with compact, convex domains $X_k$ such that $\mathrm{diam}(X_k)\to0$ (see \cite[Definition 3.11]{Jacelon:2025ab}). This is the right choice for Proposition~\ref{prop:zx}, Theorem~\ref{thm:zx} and Corollary~\ref{cor:zx} as $T(\js)$, and $[\mathrm{Hom}^1(\js,A)]$ for the relevant codomains $A$, are both singletons.

\begin{proposition} \label{prop:zx}
Let $\fp\in[1,\infty]$, let $(X,\rho)$ be a nonempty compact, convex subset of Euclidean space, let $(\js(X),W_\fp)$ be the corresponding tracial $\fp$-Wasserstein space, and let $A$ be a simple, separable, unital, nuclear, $\js$-stable $\cs$-algebra with $T(A)\ne\emptyset$. Then, $\overrightarrow{d_{U,\fp}}$ is a metric on the set $[\mathrm{Hom}^1(\js(X),A)]$ of approximate unitary equivalence classes of unital $^*$-homomorphisms $\js(X)\to A$.
\end{proposition}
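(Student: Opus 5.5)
The plan follows the proof of Theorem~\ref{thm:cstarmetric}, with $C(X)$ replaced by the inductive limit $\js(X)=\varinjlim(X_{p_n,q_n},\zeta_n)$. There are three things to check: that $\overrightarrow{d_{U,\fp}}$ is well defined on classes, that it satisfies symmetry and the triangle inequality, and that it separates points.

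\emph{Well-definedness.} Each nucleus $\mathcal{D}(X_{p_n,q_n})$ is norm compact. This follows from Arzel\`{a}--Ascoli, since it consists of uniformly bounded $1$-Lipschitz matrix-valued functions. If $\varphi$ and $\psi$ are approximately unitarily equivalent, then for each $n$ one can choose a unitary $u$ making $\|\varphi_n(f)-u\psi_n(f)u^*\|$ uniformly small on $\mathcal{D}(X_{p_n,q_n})$. Since $\|\cdot\|_\fp\le\|\cdot\|$, this gives $\overrightarrow{d_{U,\fp}}(\varphi,\psi)=0$.

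\emph{Metric axioms.} Symmetry comes from replacing $u$ by $u^*$ and using unitary invariance of $\|\cdot\|_\fp$. For the triangle inequality, fix $n$ and compose near-optimal unitaries at level $n$; Proposition~\ref{prop:metric} gives the triangle inequality for $\|\cdot\|_\fp$. Taking $\limsup_n$, which is subadditive, finishes this step. Combining these with the first step, $\overrightarrow{d_{U,\fp}}$ is a well-defined pseudometric on $[\mathrm{Hom}^1(\js(X),A)]$.

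\emph{Separation of points.} Suppose $\overrightarrow{d_{U,\fp}}(\varphi,\psi)=0$. Then for all large $n$ and every $f\in\mathcal{D}(X_{p_n,q_n})$, $\varphi_n(f)$ is approximately unitarily equivalent to $\psi_n(f)$ in $\|\cdot\|_\fp$. As in Theorem~\ref{thm:hadwin}, via \eqref{eqn:cs} in the GNS closure of each $\tau\in T(A)$, this gives $\tau(\varphi_n(f))=\tau(\psi_n(f))$. The span of $\mathcal{D}(X_{p_n,q_n})$ is dense in $X_{p_n,q_n}$: it contains scalar multiples of Lipschitz elements, and these are dense in the generalised dimension drop algebra. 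Because $\tau\circ\varphi_m$ is compatible with $\tau\circ\varphi_n$ along $\zeta_{n,m}$, agreement for all large $n$ gives agreement at every level. Hence $\tau\circ\varphi=\tau\circ\psi$ on the dense union $\bigcup_n\zeta_{n,\infty}(X_{p_n,q_n})$, so $T(\varphi)=T(\psi)$.

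\emph{Main obstacle.} The remaining step, which I expect to be the hardest, is to deduce approximate unitary equivalence from agreement on traces using \cite[Theorem B]{Carrion:wz}. For this one must show that the invariant $\underline{K}T_u$ of $\js(X)$ is determined by traces.
\begin{enumerate}
\item $\js(X)$ has point-like $K$-theory \eqref{eqn:pointlike}, as for $\js$: $K_0\cong\zz[1]$ and $K_1=0$ with no torsion, because $X$ is contractible and the dimension drop blocks have $K$-theory of $\js$ type. So total $K$-theory agreement is automatic for unital maps.
\item Since $K_1(\js(X))=0$, the Thomsen sequence gives $\overline{K}_1^{\mathrm{alg}}(\js(X))\cong\mathrm{Aff}(T(\js(X)))/\overline{\rho(K_0)}$. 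The $\overline{K}_1^{\mathrm{alg}}$ component is then induced by the tracial data, exactly as in the $K_1(C(X))=0$ case of Theorem~\ref{thm:cstarmetric}.
\item \cite[Theorem B]{Carrion:wz} also requires the maps to be full, or injective. Since $\js(X)$ is simple, every unital $^*$-homomorphism into $A$ is injective, hence an embedding.
\end{enumerate}
Given these three points, classification gives $\varphi\sim_{au}\psi$. Of these points, verifying the point-like $K$-theory of $\js(X)$, or citing it from \cite{Jacelon:2025ab}, is the step most likely to need care.
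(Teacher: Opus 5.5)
Your well-definedness and metric-axiom steps are fine. Your final classification step is also fine: applying \cite[Theorem B]{Carrion:wz} directly to $\varphi,\psi$ on $\js(X)$, using $K_1(\js(X))=0$ and point-like $K_0$, is a legitimate variant of the paper's level-by-level application to $\varphi_m,\psi_m$.

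The gap is in the tracial step of separation of points. Write $\delta_n:=\inf_{u\in U(A)}\sup_{f\in\mathcal{D}(X_{p_n,q_n})}\|\varphi_n(f)-u\psi_n(f)u^*\|_\fp$. Because $\overrightarrow{d_{U,\fp}}$ is defined as a $\limsup$, the hypothesis $\overrightarrow{d_{U,\fp}}(\varphi,\psi)=0$ only says that $\delta_n\to0$. It does not say that $\delta_n=0$ for large $n$. So at a fixed level you do not get approximate unitary equivalence of $\varphi_n(f)$ and $\psi_n(f)$. You only get $|\tau(\varphi_n(g))-\tau(\psi_n(g))|\le\delta_n$ for $g\in\mathcal{D}(X_{p_n,q_n})$. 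You have interchanged ``for every $\eps$, eventually in $n$'' with ``eventually in $n$, for every $\eps$''.

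With merely approximate agreement, density of $\mathrm{span}\,\mathcal{D}(X_{p_n,q_n})$ in each $X_{p_n,q_n}$ does not give exact agreement at a fixed level $m$. To make $|\tau(\varphi_m(f))-\tau(\psi_m(f))|=|\tau(\varphi_n(\zeta_{m,n}(f)))-\tau(\psi_n(\zeta_{m,n}(f)))|$ tend to $0$, you must control $\zeta_{m,n}(f)$ by nucleus elements with a constant independent of $n$. Nothing in your argument provides this. The connecting maps involve Lipschitz eigenvalue maps and unitary conjugations, and nothing guarantees that the operator-norm Lipschitz constant of $\zeta_{m,n}(f)$ stays bounded as $n\to\infty$.

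The missing ingredient is a property of the construction of $\js(X)$, and it is exactly what the paper uses. Only trace functions need to be controlled. Let $\mathcal{L}_n$ be the set of self-adjoint elements of $X_{p_n,q_n}$ whose trace function $\mathrm{tr}\circ f$ is $1$-Lipschitz. The proportions of eigenvalue functions in the connecting maps ensure that for each $f\in\mathcal{L}_m$ there is $K$, independent of $n$, with $\zeta_{m,n}(f)\in K\mathcal{L}_n$ for all $n\ge m$.

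By the nucleus identity \eqref{eqn:nucleus}, every element of $\mathcal{L}_n$ is a nucleus element plus a scalar plus a trace-zero element. The last two are invisible to $\tau\circ\varphi_n-\tau\circ\psi_n$ because both maps are unital. Hence $|\tau(\varphi_m(f))-\tau(\psi_m(f))|\le K\delta_n\to0$ for $f\in\mathcal{L}_m$. Since $\mathcal{L}_m$ has dense span, $T(\varphi_m)=T(\psi_m)$ for every $m$.

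Equivalently, the central scalar Lipschitz functions in $\mathcal{D}(X_{p_n,q_n})$ give $W_1(\mu_{\tau\circ\varphi_n},\mu_{\tau\circ\psi_n})\le\delta_n$. The uniform tracial Lipschitz bound on $\zeta_{m,n}$ then transfers this to level $m$.

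Once you have $T(\varphi)=T(\psi)$ this way, your classification step finishes the proof. Note that the step you flagged as the main obstacle is routine here: the point-like $K$-theory is simply cited from \cite[Proposition 1.16]{Jacelon:2025ab}.
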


\begin{proof}
Since $X$ is contractible, we have from \cite[Proposition 1.16]{Jacelon:2025ab} that each $X_{p_n,q_n}$ (and $\js(X)$ itself) has point-like ordered $K$-theory
\begin{equation} \label{eqn:pointlike}
(K_0(X_{p_n,q_n}),K_0(X_{p_n,q_n})_+,[1_{X_{p_n,q_n}}],K_1(X_{p_n,q_n})) \cong (\zz,\nn,1,0).
\end{equation}
In particular, any two unital $^*$-homomorphisms $\varphi,\psi \colon \js(X) \to A$ automatically agree on $K$-theory. Since $\js(X)$ is simple, these $^*$-homomorphisms are also all embeddings. Every nucleus $\mathcal{D}(X_{p_n,q_n})$ is compact, so it follows as in the proof of Theorem~\ref{thm:cstarmetric} that $\overrightarrow{d_{U,\fp}}(\varphi,\psi)=0$ if $\varphi$ and $\psi$ are approximately unitarily equivalent. For the converse, we extract from \cite[Example 2.15]{Jacelon:2024aa} that
\begin{equation} \label{eqn:nucleus}
\mathcal{D}(X_{p_n,q_n}) \cap (X_{p_n,q_n})_{sa} + \rr \cdot 1 + (X_{p_n,q_n})_0 = \{f \in (X_{p_n,q_n})_{sa} \mid \mathrm{tr} \circ f \in \mathrm{Lip}^\rr_1(X,\rho)\} =: \mathcal{L}_n.
\end{equation}
Here, $(X_{p_n,q_n})_0$ is the set of self-adjoint elements of $X_{p_n,q_n}$ that vanish on all traces, and $\mathrm{tr}$ denotes the unique trace on $M_{p_nq_n}$. This is in fact the defining property of a nucleus (see \cite[Theorem 2.12]{Jacelon:2024aa}). Now let $m\in\nn$, $f\in\mathcal{L}_m$ and $\eps>0$. The construction of $\js(X)$ ensures that there is $K>0$ such that $\zeta_{m,n}(f) \in K\mathcal{L}_n$ for every $n\ge m$. Suppose that $\overrightarrow{d_{U,\fp}}(\varphi,\psi)=0$. Then for every sufficiently large $n$, we deduce as in Theorem~\ref{thm:hadwin} that $|\tau(\varphi_n(g))-\tau(\psi_n(g))| < \eps$ for every $g \in \mathcal{D}(X_{p_n,q_n})$ and every $\tau\in T(A)$. Since $\varphi_n,\psi_n \colon X_{p_n,q_n} \to A$ are unital, it then follows from \eqref{eqn:nucleus} that for every $\tau\in T(A)$,
\[
|\tau(\varphi_m(f))-\tau(\psi_m(f))|  = |\tau(\varphi_n(\zeta_{m,n}(f)))-\tau(\psi_n(\zeta_{m,n}(f)))| < K\eps
\]
and hence, since $\eps$ is arbitrary, that $\tau(\varphi_m(f))=\tau(\psi_m(f))$. Since $\mathcal{L}_m$ has dense linear span in $X_{p_m,q_m}$, this implies that $T(\varphi_m)=T(\psi_m)$. As in the proof of Theorem~\ref{thm:cstarmetric}, we conclude via classification that $\varphi_m$ and $\psi_m$ are approximately unitarily equivalent. Since $m$ is arbitrary, we deduce the same for $\varphi$ and $\psi$.
\end{proof}

\begin{theorem} \label{thm:zx}
Let $X$ be a nonempty compact, convex subset of the Euclidean plane, and let $A$ be a simple, separable, unital, nuclear, $\js$-stable $\cs$-algebra that either has a unique trace, or has real rank zero and finitely many extremal traces. Then, $\overrightarrow{d_{U,2}}(\varphi,\psi)=\overrightarrow{W_2}(\varphi,\psi)$ for every pair of unital $^*$-homomorphisms $\varphi,\psi \colon \js(X) \to A$.
\end{theorem}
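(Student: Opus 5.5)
We prove the two inequalities separately, following the pattern of Corollary~\ref{cor:transport}\eqref{it:main2}: the lower bound comes from Hoffman--Wielandt, and the upper bound from transport plus classification, applied at each finite stage $X_{p_n,q_n}$ and then passed through the limit.

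\emph{Lower bound $\overrightarrow{W_2}\le\overrightarrow{d_{U,2}}$.} Fix $\tau\in T(A)$ and a large $n$. The measures $\mu_n:=\mu_{\tau\circ\varphi_n}$ and $\nu_n:=\mu_{\tau\circ\psi_n}$ are the measures on $X$ induced via $\partial_e T(X_{p_n,q_n})\cong X$.

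First, I would find a sufficiently rich family of test elements in the nucleus. For each $\lambda\in\mathrm{Lip}^\cc_1(X,\rho)\cap B_{\mathrm{diam}(X)}$, I want an element $g_\lambda$ whose real and imaginary parts lie (up to scaling and constants) in $\mathcal{D}(X_{p_n,q_n})$, and whose pushforward under the traces recovers $\lambda_*\mu_n$. The natural candidate is the identity function $z\cdot 1$. However, it is not in $X_{p_n,q_n}$ unless it is corrected near $x_0$ and $x_1$, and it is only scalar-valued there, which is fine. So $g(z)=z1_{p_nq_n}$ does belong to $X_{p_n,q_n}$, since scalars lie in both $M_{p_n}\otimes 1$ and $1\otimes M_{q_n}$. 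It is normal, $1$-Lipschitz and bounded by $\mathrm{diam}(X)$ after translation.

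Since $\varphi_n(g)$ and $\psi_n(g)$ are normal in $A$, Theorem~\ref{thm:hw}, applied in $\pi_\tau(A)''$ exactly as in the proof of Corollary~\ref{cor:normal}, gives
\[
W_2(\mu_n,\nu_n)=W_{2,\tau}(\varphi_n(g),\psi_n(g))\le \|\varphi_n(g)-u\psi_n(g)u^*\|_{2,\tau}
\]
for every unitary $u$. Here the spectral measure of $\varphi_n(g)$ under $\tau$ is $\mu_n$, because $\mathrm{tr}(h(g(z)))=h(z)$. Taking the infimum over $u$, then the supremum over $\tau$ (bounded by the sup inside), and then $\limsup_n$, yields $\overrightarrow{W_2}\le\overrightarrow{d_{U,2}}$. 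This uses the identification $\overrightarrow W_2((\mu_n),(\nu_n))=\limsup W_2(\mu_n,\nu_n)$.

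\emph{Upper bound.} I would run the proof of Theorem~\ref{thm:transport} with $C(X)$ replaced by $X_{p_n,q_n}$ and $\mathrm{Lip}_1$ replaced by $\mathcal{D}(X_{p_n,q_n})$. The steps are as follows.

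First, classification applies. By Proposition~\ref{prop:zx}, the $K$-theory is point-like, and $K_1=0$ or $A$ has real rank zero, so by the argument of Theorem~\ref{thm:cstarmetric} approximate agreement on traces gives approximate unitary conjugacy on the compact set $\mathcal{D}(X_{p_n,q_n})$.

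Second, I perturb and diagonalise as in items \eqref{it:perturb1}--\eqref{it:perturb2}, with extremal traces $\tau_1,\dots,\tau_m$ and diffuse faithful $\mu_i,\nu_i$.

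Third, I take the homeomorphisms $h_i$ from Theorem~\ref{thm:convex} with $k=1$ and $\fp=2$. I may need to arrange that $h_i$ fixes $x_0,x_1$. This is possible by choosing the disjoint tubes to avoid the base points, which form a set of small measure. Then $f\mapsto f\circ h_i$ is an endomorphism of $X_{p_n,q_n}$.

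Fourth, I set $\varphi'_n=\sum_i\psi_{n,i}\circ h_i^*$ and estimate $\|\varphi'_n(f)-\psi_n(f)\|_2^2$. For $f\in\mathcal{D}$, the pointwise bound $\|f(h_i(x))-f(x)\|\le\rho(x,h_i(x))$ gives $\mathrm{tr}|f\circ h_i-f|^2\le\rho(x,h_i(x))^2$, so this quantity is at most $\max_i\int\rho(x,h_i(x))^2\,d\nu_i+O(\eps)$.

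This produces, for each $n$, a bound $\inf_u\sup_{f\in\mathcal D}\|\varphi_n(f)-u\psi_n(f)u^*\|_2\le \max_iW_2(\mu_{i,n},\nu_{i,n})+\eps$. Taking $\limsup_n$ gives $\overrightarrow{d_{U,2}}\le\overrightarrow{W_2}$.

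\emph{Main obstacle.} I expect the main obstacle to be the upper bound's reliance on classification of embeddings of $X_{p_n,q_n}$ rather than of $C(X)$. Two points need care. First, \cite[Theorem 1.1]{Carrion:wz} must apply to the non-commutative, subhomogeneous domain $X_{p_n,q_n}$; it is nuclear and satisfies the UCT, so I expect it does. Second, $\varphi'_n$ must agree with $\varphi_n$ on the full invariant including $\overline{K}_1^{\mathrm{alg}}$, which holds by the point-like $K$-theory and the hypotheses on $A$.

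A subtler issue is that $\varphi_n$ need not be full or diffuse with respect to individual traces. I would handle this by perturbing inside the limit: replace $\varphi_n$ by $\varphi_N\circ\zeta_{n,N}$ for large $N$, whose measures are nearly faithful by the dense constant eigenvalue functions. The cost in $W_2$ is small because of the tracially Lipschitz property of the connecting maps.
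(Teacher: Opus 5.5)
Your proposal is essentially the paper's proof. The lower bound is Hoffman--Wielandt applied to the central element $z1_{p_nq_n}=\iota_n(\mathrm{id}_X)$, which is what the paper's appeal to Theorem~\ref{thm:lowerbound} for $\varphi_n\circ\iota_n$ reduces to. The upper bound combines a base-point-fixing transport homeomorphism $h$ from Theorem~\ref{thm:convex}, the automorphism $f\mapsto f\circ h$ of $X_{p_n,q_n}$, local classification, and the estimate $\mathrm{tr}(|f(h(x))-f(x)|^2)\le\rho(h(x),x)^2$. Your diagonalisation over $\tau_1,\dots,\tau_m$, with $\limsup_n\max_i=\max_i\limsup_n$, usefully makes explicit the real rank zero case, which the paper's written proof (working with a single trace) leaves implicit.

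The only slip is your closing remedy, which is vacuous: $\varphi_N\circ\zeta_{n,N}=\varphi_n$, and faithfulness of $\mu_n$ is already automatic because $\zeta_{n,\infty}$ is injective and $A$ is simple. Diffuseness should instead be obtained as in item~\eqref{it:perturb1} of Theorem~\ref{thm:transport}: use existence and local uniqueness for $X_{p_n,q_n}$ to replace $\varphi_n,\psi_n$ by maps with nearby diffuse trace measures.
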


\begin{proof}
The proof is an adaptation of Corollary~\ref{cor:transport} to the setting of generalised dimension drop algebras (cf.\ \cite[Theorems 6.4 and 6.6]{Jacelon:2025ab}). Let $\varphi,\psi \colon \js(X) \to A$ be unital $^*$-homomorphisms. For each $n\in\nn$, define $\mu_n:=\mu_{\tau\circ\varphi_n}, \nu_n:=\mu_{\tau\circ\psi_n} \in \prob(X)$, where $\tau$ denotes the unique trace of $A$. The map $\iota_n(f):=\mathrm{diag}(f,\dots,f)$ is an embedding of $C(X)$ into the centre of $X_{p_n,q_n}$ that maps $\mathrm{Lip}^\cc_1(X,\rho)\cap B_{\mathrm{diam}(X,\rho)}$ into $\mathcal{D}(X_{p_n,q_n})$ and induces the identity map on traces under our identifications $T(X_{p_n,q_n}) \cong \prob(X) \cong T(C(X))$. We therefore have from Theorem~\ref{thm:lowerbound} that 
\begin{align*}
\overrightarrow{d_{U,2}}(\varphi,\psi) &\ge \limsup_{n\to\infty} d^\cc_{U,2}(\varphi_n\circ\iota_n,\psi_n\circ\iota_n)\\
&\ge \limsup_{n\to\infty} W_2(\varphi_n\circ\iota_n,\psi_n\circ\iota_n)\\
&= \limsup_{n\to\infty} W_2(\mu_n,\nu_n) = \overrightarrow{W_2}(\varphi,\psi).
\end{align*}
For the reverse inequality, we adapt Theorem~\ref{thm:transport}. Let $n\in\nn$ and  $\eps>0$. By Theorem~\ref{thm:convex}, we can find a homeomorphism $h \colon X\to X$ such that $h_*(\nu_n)$ is close to $\mu_n$ and $\|\rho(h(x),x)\|^2_{L^{2}(X,\nu_n)} \le (W_2(\mu_n,\nu_n) + \eps)^2$. The construction outlined in the proof of the theorem also allows us to assume that $h(x_0)=x_0$ and $h(x_1)=x_1$. The map $f \mapsto f\circ h$ therefore defines an automorphism $\alpha_h$ of $X_{p_n,q_n}$. The embedding $\varphi'_n:=\psi_n\circ\alpha_h$ agrees with $\varphi_n$ on $K$-theory and approximately agrees on traces, so by the classification of full unital embeddings of $X_{p_n,q_n}$ into the sequence algebra $A_\infty$ provided by \cite[Theorem 1.1]{Carrion:wz}, we can deduce that the maps $\varphi_n,\varphi'_n \colon X_{p_n,q_n} \to A$ are unitarily conjugate within $\eps$ on the compact set $\mathcal{D}(X_{p_n,q_n})$. (That is, we are using the \emph{local} version of the classification of embeddings just as we did in the proof of Theorem~\ref{thm:transport}.) Again letting $\mathrm{tr}$ denote the unique trace on $M_{p_nq_n}$, we have
\begin{align*}
\sup_{f \in \mathcal{D}(X_{p_n,q_n})} \|\varphi'_n(f) - \psi_n(f)\|^2_2 &= \sup_{f \in \mathcal{D}(X_{p_n,q_n})} \tau\circ\psi_n(|f\circ h - f|^2)\\
&= \sup_{f \in \mathcal{D}(X_{p_n,q_n})} \int_X \mathrm{tr}(|f(h(x)) - f(x)|^2)\, d\nu_n(x)\\
&\le \sup_{f \in \mathcal{D}(X_{p_n,q_n})} \int_X \|f(h(x)) - f(x)\|^2\, d\nu_n(x)\\
&\le \int_X \rho(h(x),x)^2\, d\nu_n(x)\\
&\le (W_2(\mu_n,\nu_n) + \eps)^2.
\end{align*}
Thus, there is a unitary $u\in A$ such that $\|u\varphi_n(f)u^*-\psi_n(f)\|_2  \le W_2(\mu_n,\nu_n) + 2\eps$ for every $f \in \mathcal{D}(X_{p_n,q_n})$. We conclude that $\overrightarrow{d_{U,2}}(\varphi,\psi) \le \overrightarrow{W_2}(\varphi,\psi)$, and hence that $\overrightarrow{d_{U,2}}(\varphi,\psi) = \overrightarrow{W_2}(\varphi,\psi)$.
\end{proof}

Recall from Theorem~\ref{thm:lengthspaces} that for $\fp\in[1,\infty)$, the classical Wasserstein space $(\prob(X),W_\fp)$ over a compact, convex Euclidean domain $X$ is a compact, strictly intrinsic length space: the $\fp$-Wasserstein distance between any two measures is equal to the length of a displacement interpolation from one to the other. Our final result is an immediate consequence of this observation, together with classification \cite[Theorem B]{Carrion:wz} and Theorem~\ref{thm:zx}.

\begin{corollary} \label{cor:zx}
Let $X$ be a nonempty compact, convex subset of the Euclidean plane, and let $A$ be a simple, separable, unital, nuclear, $\js$-stable $\cs$-algebra with a unique trace. Then, the metric space $([\mathrm{Hom}^1(\js(X),A)],\overrightarrow{d_{U,2}})$ of approximate unitary equivalence classes of unital $^*$-homomorphisms $\js(X)\to A$ is isometric to the compact, strictly intrinsic length space $(\prob(X),W_2)$ of Borel probability measures on $X$.
\end{corollary}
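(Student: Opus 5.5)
The plan is to define the obvious map $\Theta\colon [\mathrm{Hom}^1(\js(X),A)] \to \prob(X)$ by $[\varphi]\mapsto$ the weak$^*$-limit of $(\mu_{\tau\circ\varphi_n})_n$, where $\tau$ is the unique trace of $A$. Equivalently, $\Theta[\varphi]$ is the point of $T(\js(X))\cong\prob(X)$ given by $\tau\circ\varphi$, under the affine isometric identification $(T(\js(X)),\overrightarrow{W}_2)\cong(\prob(X),W_2)$ recalled before Proposition~\ref{prop:zx}. This map is well defined on classes, because approximately unitarily equivalent maps induce the same trace.

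The first step is to show that $\Theta$ is an isometry. Since $A$ is monotracial, the supremum over $T(A)$ in the definition of $\overrightarrow{W_2}$ is a single term. Hence
\[
\overrightarrow{W_2}(\varphi,\psi)=\overrightarrow{W}_2\bigl((\mu_{\tau\circ\varphi_n}),(\mu_{\tau\circ\psi_n})\bigr)=W_2\bigl(\Theta[\varphi],\Theta[\psi]\bigr).
\]
Theorem~\ref{thm:zx} then gives $\overrightarrow{d_{U,2}}(\varphi,\psi)=W_2(\Theta[\varphi],\Theta[\psi])$. Proposition~\ref{prop:zx} ensures that $\overrightarrow{d_{U,2}}$ is a genuine metric on the classes, so $\Theta$ is an isometric embedding; in particular it is injective.

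The second step, and the main obstacle, is surjectivity: every $\mu\in\prob(X)$ must arise as $\tau\circ\varphi$ for some unital $\varphi\colon\js(X)\to A$. For this I would invoke the existence half of the classification of morphisms \cite[Theorem B]{Carrion:wz}, which requires a compatible map of total invariants $\underline{K}T_u(\js(X))\to\underline{K}T_u(A)$ that is faithful on traces. On $K$-theory the data are forced by \eqref{eqn:pointlike}: send $[1]\mapsto[1_A]$ on $K_0\cong\zz$, and use the zero map on $K_1=0$ and on the torsion coefficient groups, since the total $K$-theory of a point-like algebra is that of $\cc$. On traces, take the affine continuous map $T(A)=\{\tau\}\to T(\js(X))$ sending $\tau\mapsto\mu$. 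Compatibility with the pairing holds because $\rho(K_0(\js(X)))$ is generated by the constant function $1$. For $\overline{K}_1^{\mathrm{alg}}$, the Thomsen sequence reduces $\overline{K}_1^{\mathrm{alg}}(\js(X))$ to $\mathrm{Aff}(T(\js(X)))/\overline{\rr\cdot 1}$ or a quotient of it, so the induced map is the dual of the trace map, which is automatically compatible.

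The delicate point is faithfulness. Theorem B classifies \emph{full} (injective) maps, which requires $\tau\circ\varphi$ to be faithful on $\js(X)$. Simplicity of $\js(X)$ settles this: every trace on a simple unital algebra is faithful, and the corresponding $\varphi$ is automatically injective. So every $\mu$, including finitely supported ones (which correspond to traces of $\js(X)$ because $\partial_e T(\js(X))\cong X$), is realised.

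Finally, $(\prob(X),W_2)$ is a compact, strictly intrinsic length space by \cite[Corollary 2.7]{Lott:2009aa}, since $X$ is a compact convex Euclidean domain, as in the proof of Theorem~\ref{thm:lengthspaces}. Transporting this structure along the surjective isometry $\Theta$ completes the proof. The degenerate case $X=\{\bullet\}$ is trivial by the convention $\js(\bullet)=\js$, because both sides are then singletons.
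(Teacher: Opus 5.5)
Your proposal is correct and follows the paper's proof: surjectivity comes from the existence half of \cite[Theorem B]{Carrion:wz}, the isometry comes from Theorem~\ref{thm:zx} once the supremum over the single trace of $A$ collapses, and the compact, strictly intrinsic length structure of $(\prob(X),W_2)$ comes from \cite{Lott:2009aa}; you simply spell out the invariant-level check that the paper leaves implicit. That check contains two small slips, neither of which affects the argument: on $K_0(\js(X);\zz/n)\cong\zz/n$ the map must send the reduction of $[1]$ to the reduction of $[1_A]$ rather than be zero (it is the map induced by the unital inclusion $\cc\to A$), and $\overline{K}_1^{\mathrm{alg}}(\js(X))\cong\mathrm{Aff}(T(\js(X)))/\zz\cdot 1$, which is not a quotient of $\mathrm{Aff}(T(\js(X)))/\rr\cdot 1$, although the induced map on it is still determined by the trace map, as you say.
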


\begin{proof}
The map that sends $[\varphi] \in [\mathrm{Hom}^1(\js(X),A)]$ to $\mu_{\tau\circ\varphi} \in \prob(X)$ is surjective (by the existence part of \cite[Theorem B]{Carrion:wz}) and isometric (by Theorem~\ref{thm:zx}).
\end{proof}


\end{document}